\newtheorem{thm}{Theorem}[section]
\newtheorem{prop}[thm]{Proposition}
\newtheorem{lem}[thm]{Lemma}
\newtheorem{cor}[thm]{Corollary}
\newtheorem*{theorem*}{Theorem}
\newtheorem*{statement*}{Statement}
\theoremstyle{definition}
\newtheorem{definition}[thm]{Definition}
\newtheorem{example}[thm]{Example}
\theoremstyle{remark}
\newtheorem{remark}[thm]{Remark}
\numberwithin{equation}{section}
\newcommand{\D}{\mathbb{D}}
\newcommand{\T}{\mathbb{T}}
\newcommand{\C}{\mathbb{C}}
\newcommand{\R}{\mathbb{R}}
\newcommand{\Z}{\mathbb{Z}}
\newcommand{\clb}{\mathcal{B}}
\newcommand{\cld}{\mathcal{D}}
\newcommand{\cle}{\mathcal{E}}
\newcommand{\clg}{\mathcal{G}}
\newcommand{\clh}{\mathcal{H}}
\newcommand{\clk}{\mathcal{K}}
\newcommand{\clr}{\mathcal{R}}
\newcommand{\cls}{\mathcal{S}}
\newcommand{\clt}{\mathcal{T}}
\newcommand{\clu}{\mathcal{U}}
\newcommand{\clw}{\mathcal{W}}
\newcommand{\raro}{\rightarrow}
\newcommand{\tv}{\tilde{V}}
\newcommand{\norm}[1]{\left\Vert#1\right\Vert}
\begin{document}

\title[Twisted isometries]{Orthogonal decompositions and twisted isometries}

%\today

\author{Narayan Rakshit}
\address{Narayan Rakshit, Statistics and Mathematics Unit, Indian Statistical Institute, 8th Mile, Mysore Road, Bangalore, Karnataka - 560059, India}
\email{narayan753@gmail.com}

\author{Jaydeb Sarkar}
\address{J. Sarkar, Indian Statistical Institute, Statistics and Mathematics Unit, 8th Mile, Mysore Road, Bangalore, 560059,
India}
\email{jay@isibang.ac.in, jaydeb@gmail.com}

\author{Mansi Suryawanshi}
\address{Mansi Suryawanshi, Statistics and Mathematics Unit, Indian Statistical Institute, 8th Mile, Mysore Road, Bangalore, Karnataka - 560059, India}
\email{mansisuryawanshi1@gmail.com}

\subjclass[2010]{46L65, 47A20, 46L05, 81S05, 30H10, 46J15}

\keywords{Isometries, von Neumann and Wold decompositions, Heisenberg and rotation $C^*$-algebras, nuclear $C^*$-algebras, universal $C^*$-algebras, noncommutative tori, Hardy space over the unit polydisc}

\begin{abstract}
Let $n > 1$. Let $\{U_{ij}\}_{1 \leq i < j \leq n}$ be $\binom{n}{2}$ commuting unitaries on some Hilbert space $\mathcal{H}$, and suppose $U_{ji} := U_{ij}^*$, $1 \leq i < j \leq n$. An $n$-tuple of isometries $V = (V_1, \ldots ,V_n)$ on $\mathcal{H}$ is called $\mathcal{U}_n$-twisted isometry with respect to $\{U_{ij}\}_{i<j}$ (or simply $\mathcal{U}_n$-twisted isometry if $\{U_{ij}\}_{i<j}$ is clear from the context) if $V_i$'s are in the commutator $\{U_{st}: s \neq t\}'$, and $V_i^*V_j=U_{ij}^*V_jV_i^*$, $i \neq j$

\noindent We prove that each $\mathcal{U}_n$-twisted isometry admits a von Neumann-Wold type orthogonal decomposition, and prove that the universal $C^*$-algebra generated by $\mathcal{U}_n$-twisted isometries is nuclear. We exhibit concrete analytic models of $\mathcal{U}_n$-twisted isometries, and establish connections between unitary equivalence classes of the irreducible representations of the $C^*$-algebras generated by $\mathcal{U}_n$-twisted isometries and the unitary equivalence classes of the non-zero irreducible representations of twisted noncommutative tori. Our motivation of $\mathcal{U}_n$-twisted isometries stems from the classical rotation $C^*$-algebras and Heisenberg group $C^*$-algebras.
\end{abstract}

\maketitle

\tableofcontents

\section{Introduction}\label{sec:intro}

One of the most simple and fundamental of all the concepts studied in various branches of linear analysis, mathematical physics, and its related fields is the notion of isometries. Let $\clh$ be a Hilbert space (all Hilbert spaces in this paper are separable and over $\mathbb{C}$), and let $\clb(\clh)$ denote the $C^*$-algebra of all bounded linear operators on $\clh$. An operator $V \in \clb(\clh)$ is called \textit{isometry} if $V^*V = I_{\clh}$, or, equivalently, $\|Vh\| = \|h\|$ for all $h \in \clh$.

The typical examples are unitary operators, and shift operators. Recall that an isometry $V \in \clb(\clh)$ is called \textit{shift} if $V^{*m} \raro 0$ in the strong operator topology (that is, $\|V^{*m}h\| \raro 0$ as $m \raro \infty$ for all $h \in \clh$). The classical von Neumann–Wold decomposition theorem says that these are all examples of isometries:

\begin{thm}[J. von Neumann and H. Wold]\label{thm: classic}
Let $V \in \clb(\clh)$ be an isometry. Then $\clh = \clh_{\{1\}} \oplus \clh_{\emptyset}$ for some $V$-reducing closed subspaces $\clh_{\{1\}}$ and $\clh_{\emptyset}$ such that $V|_{\clh_{\{1\}}}$ is a shift and $V|_{\clh_{\emptyset}}$ is a unitary operator.
\end{thm}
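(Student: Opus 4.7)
The plan is to produce the decomposition by explicitly constructing the two summands via a wandering subspace on the shift side and an intersection on the unitary side. Set $\clw = \clh \ominus V\clh$; since $V$ is an isometry, $V\clh$ is closed and $\clw = \Ker V^*$. The proposed subspaces are
\[
\clh_{\{1\}} = \bigoplus_{n = 0}^{\infty} V^n \clw, \qquad \clh_{\emptyset} = \bigcap_{n = 0}^{\infty} V^n \clh,
\]
and I would show that these are mutually orthogonal, together span $\clh$, are both $V$-reducing, and that the restrictions give a shift and a unitary respectively.

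First I would check pairwise orthogonality of the family $\{V^n \clw\}_{n \geq 0}$: for $m > n$ and $w, w' \in \clw$, $\langle V^n w, V^m w' \rangle = \langle w, V^{m-n} w' \rangle = 0$ because $V^{m-n} w' \in V\clh$ while $w \perp V\clh$. In particular $\clh_{\{1\}}$ is a well-defined closed subspace. Next, an inductive use of the single-step orthogonal decomposition $\clh = V\clh \oplus \clw$, after applying powers of $V$, gives
\[
\clh = V^n \clh \oplus \bigoplus_{k = 0}^{n-1} V^k \clw \qquad (n \geq 1),
\]
from which one reads off $\clh \ominus \clh_{\{1\}} = \bigcap_{n \geq 0} V^n \clh = \clh_{\emptyset}$, so the two pieces are complementary.

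To see that each summand reduces $V$, note that $V \clh_{\{1\}} \subseteq \clh_{\{1\}}$ is immediate from the definition, while $V^* V^n w = V^{n-1} w$ for $n \geq 1$ and $V^* w = 0$ for $w \in \clw$, so $V^* \clh_{\{1\}} \subseteq \clh_{\{1\}}$ as well; consequently $\clh_{\emptyset}$ is also reducing. For the restriction to $\clh_{\{1\}}$, the shift property $V^{*m}|_{\clh_{\{1\}}} \to 0$ strongly follows since for $h = \sum_n V^n w_n$ one has $\|V^{*m} h\|^2 = \sum_{n \geq m} \|w_n\|^2 \to 0$. For $\clh_{\emptyset}$, the restriction is clearly isometric, and surjectivity onto $\clh_{\emptyset}$ is the step I expect to be the most delicate: given $h \in \clh_{\emptyset}$, write $h = V k$ using $h \in V\clh$, and then use $h \in V^{n+1}\clh$ for every $n$ to conclude $k \in V^n \clh$ for every $n$, so $k \in \clh_{\emptyset}$. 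This shows $V|_{\clh_{\emptyset}}$ is onto and hence unitary.

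The main obstacle is the bookkeeping needed to identify the orthogonal complement of the wandering construction with the intersection $\bigcap_n V^n\clh$, and to extract the preimage inside $\clh_{\emptyset}$ for the surjectivity argument; both hinge on iterating the elementary splitting $\clh = V\clh \oplus \Ker V^*$, which is exactly the content of $V$ being an isometry. Uniqueness, if desired, follows once one observes that any reducing decomposition into a shift and a unitary must send the wandering subspace of $V$ entirely into the shift summand.
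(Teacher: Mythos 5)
Your proof is correct and is the standard Wold decomposition argument. The paper does not actually prove Theorem \ref{thm: classic}; it is stated as a classical result attributed to von Neumann and Wold, and the paper simply records the geometric representations $\clh_{\{1\}} = \bigoplus_{j \geq 0} V^j \clw$ and $\clh_{\emptyset} = \bigcap_{j \geq 0} V^j \clh$ in \eqref{eqn: H_s and H_u} (citing \cite{S} for the easy verification). Your construction reproduces exactly these formulas and the iterated splitting $\clh = V^n\clh \oplus \bigoplus_{k<n} V^k\clw$, the orthogonality and reducing checks, the strong-operator shift estimate, and the injectivity trick for surjectivity of $V|_{\clh_{\emptyset}}$ are all correct, so no gap.
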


In particular, $V = \begin{bmatrix}\text{shift} & 0 \\ 0 & \text{unitary} \end{bmatrix}$. This decomposition is canonical as well as unique in an appropriate sense. The von Neumann–Wold decomposition plays a central role in the foundation of linear operators; however, many of its variants are also studied in connection with $C^*$-algebras, ergodic theory, stochastic process, time series analysis and prediction theory, mathematical physics, etc. For instance, Theorem \ref{thm: classic} plays a key role in classifying $C^*$-algebras generated by isometries \cite{Coburn}. Another motivation for the study of isometries on Hilbert spaces, which is also relevant to our notion of twisted isometries, stems from the classical rotation algebras and Heisenberg group $C^*$-algebras \cite{Packer, Tron}. Also see \cite[Section 4]{W} in the context of universal $C^*$-algebras generated by pairs of isometries $V_1$ and $V_2$ such that
\[
V_1^* V_2 = e^{2 \pi i \vartheta} V_2 V_1^* \qquad (\vartheta \in \mathbb{R}).
\]
In this paper also, along with a von Neumann–Wold type decomposition, we present a few glimpses of applications of the above to $C^*$-algebras for a class of tuples of isometries (essentially, we will replace $e^{2 \pi i \vartheta}$ by a unitary $U$ in the commutator $\{V_1, V_2\}'$).

In view of Theorem \ref{thm: classic}, it is a natural question to ask whether an $n$-tuple, $n>1$, of isometries can be represented by tractable model operators as above. This is, on one hand, of course, almost hopeless in general, where, on the other extreme, pairs of commuting isometries represent (in an appropriate sense) the set of all bounded linear operators on Hilbert spaces. Nevertheless, Theorem \ref{thm: classic} motivates one to formulate the following definition:

\begin{definition}[Orthogonal decompositions]
Let $(V_1, \ldots, V_n)$ be an $n$-tuple of isometries acting on $\clh$. We  say that $V$ admits a von Neumann–Wold decomposition (orthogonal decomposition in short) if there exist $2^n$ closed subspaces $\{\clh_A\}_{A \subseteq I_n}$ of $\clh$ (some of them may be trivial) such that

(i) $\clh_A$ reduces $V_i$ for all $i=1, \ldots, n$, and $A \subseteq \{1,\ldots, n\}$,

(ii) $\clh = \bigoplus_{A \subseteq \{1,\ldots, n\}} \clh_A$, and

(iii) for each $A \subseteq \{1,\ldots, n\}$, ${V_i}|_{\clh_A}$, $i \in A$, is a shift, and ${V_j}|_{\clh_A}$, $j \in A^c$, is a unitary.
\end{definition}

We illustrate this with concrete examples: Let $z_{ij} \in \T$, $1 \leq i < j \leq n$, and suppose $z_{ji} = \bar{z}_{ij}$ for all $1 \leq i<j \leq n$. An $n$-tuple of isometries $(V_1, \ldots, V_n)$ on some Hilbert space $\clh$ is said to be \textit{doubly non-commuting isometries} if $V_i^* V_j = \bar{z}_{ij} V_j V_i^*$ for all $i \neq j$. The following comes from \cite[Theorem 3.6]{JP}:

\begin{thm}\label{thm: Pinto}
Each $n$-tuple of doubly non-commuting isometries admits an orthogonal decomposition.
\end{thm}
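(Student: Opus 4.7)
The plan is to realize the $2^n$ reducing subspaces as the joint spectral subspaces of $n$ pairwise commuting projections, where the $i$-th projection is onto the classical Wold unitary part of $V_i$ regarded individually. For each $i$, since $V_i$ is an isometry, the sequence $\{V_i^m V_i^{*m}\}_{m\geq 0}$ is a decreasing sequence of orthogonal projections, so
\[
P_i := \mathrm{SOT}\text{-}\lim_{m \to \infty} V_i^m V_i^{*m}
\]
exists; by Theorem \ref{thm: classic} applied to $V_i$ alone, $P_i$ is the projection onto the Wold unitary part of $V_i$, with $V_i|_{\ran P_i}$ unitary and $V_i|_{\ker P_i}$ a shift.

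The crux is then to show that $P_i$ commutes with every $V_j$, and consequently that $\{P_1, \ldots, P_n\}$ pairwise commute. Iterating the defining relation $V_i^* V_j = \bar{z}_{ij} V_j V_i^*$ gives $V_i^{*m} V_j = \bar{z}_{ij}^m V_j V_i^{*m}$, whose adjoint is $V_j^* V_i^m = z_{ij}^m V_i^m V_j^*$. Combining these,
\[
V_j^* V_i^m V_i^{*m} V_j = (z_{ij}^m V_i^m V_j^*)(\bar{z}_{ij}^m V_j V_i^{*m}) = V_i^m V_i^{*m},
\]
and passing to the SOT limit, $V_j^* P_i V_j = P_i$. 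To upgrade this to genuine commutation I use a Cauchy--Schwarz equality trick: for every $h \in \clh$,
\[
\|P_i V_j h\|^2 = \langle V_j^* P_i V_j h, h\rangle = \|P_i h\|^2 = \|V_j P_i h\|^2,
\]
while
\[
\langle P_i V_j h, V_j P_i h\rangle = \langle V_j^* P_i V_j h, P_i h\rangle = \|P_i h\|^2.
\]
Thus Cauchy--Schwarz is saturated with a nonnegative real inner product and with equal norms, which forces $P_i V_j h = V_j P_i h$. Taking adjoints, $V_j^* P_i = P_i V_j^*$, so $P_i$ reduces $V_j$; since $P_j$ is an SOT-limit of polynomials in $V_j, V_j^*$, this also gives $P_i P_j = P_j P_i$ for all $i,j$.

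With $n$ pairwise commuting projections, each reducing every $V_k$, define for $A \subseteq \{1, \ldots, n\}$
\[
P_A := \prod_{j \in A^c} P_j \cdot \prod_{i \in A} (I - P_i), \qquad \clh_A := \ran P_A.
\]
The $P_A$ are pairwise orthogonal projections summing to $\prod_i (P_i + (I - P_i)) = I$, so $\clh = \bigoplus_A \clh_A$ with each $\clh_A$ reducing every $V_k$. For $i \in A$, $\clh_A \subseteq \ker P_i$, so $\|V_i^{*m} h\|^2 = \langle V_i^m V_i^{*m} h, h\rangle \to 0$ for $h \in \clh_A$, showing $V_i|_{\clh_A}$ is a shift; for $j \in A^c$, $\clh_A \subseteq \ran P_j$ is contained in the Wold unitary part of $V_j$ and reduces $V_j$, so $V_j|_{\clh_A}$ is unitary. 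The main obstacle is the commutation step: the twisted relation yields only $V_j^* P_i V_j = P_i$, which is a priori strictly weaker than $[P_i, V_j] = 0$, and closing the gap requires the Cauchy--Schwarz equality case together with $V_j$ being isometric; once commutation is established, the remainder is standard bookkeeping with a commuting family of projections.
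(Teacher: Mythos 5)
Your proof is correct, and it takes a genuinely different route from the paper. The paper derives Theorem~\ref{thm: Pinto} as a special case of Theorem~\ref{thm: main decomposition}, which is proved by induction on the number of isometries: one builds the summands $\clh_A$ geometrically from wandering subspaces $\clw_A$, using Lemmas~\ref{lemma: reducing}--\ref{lemma: W reducing U} to show that $\clw_A$ reduces the remaining $V_j$'s and $U_{ij}$'s, and then peels off one isometry at a time via the scalar von Neumann--Wold theorem. You instead pass directly to the SOT limits $P_i = \lim_m V_i^m V_i^{*m}$ and show that each $P_i$ commutes with every $V_j$, whence the $P_i$'s mutually commute and the joint spectral projections $P_A$ give the decomposition. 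The pivotal step, upgrading the intertwining identity $V_j^* P_i V_j = P_i$ to genuine commutation $P_i V_j = V_j P_i$ by forcing equality in Cauchy--Schwarz, is clean and correct, and is indeed the only place where the isometric nature of $V_j$ and the projection nature of $P_i$ are both used. Your argument is more direct and avoids both the induction and the explicit wandering-subspace machinery; on the other hand, the paper's approach produces the closed-form expression $\clh_A = \bigoplus_{k} V_A^k(\bigcap_l V_{I_n\setminus A}^l \clw_A)$ as a byproduct, which is what feeds into the analytic models and classification results in Sections~\ref{sec: model and wand data}--\ref{sec: Classification}, whereas your proof would require a separate (though short) argument to recover that description of $\ran P_A$. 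It is also worth noting that your key computation $V_j^* V_i^m V_i^{*m} V_j = V_i^m V_i^{*m}$ goes through verbatim for a general $\clu_n$-twisted isometry, since $U_{ij}^m$ and $U_{ij}^{*m}$ cancel just as $z_{ij}^m$ and $\bar z_{ij}^m$ do; so your method in fact gives an alternative existence proof for Theorem~\ref{thm: main decomposition} as well, not merely for the doubly non-commuting (scalar twist) case.
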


Note that if $z_{ij} = 1$, $i \neq j$, then doubly non-commuting isometries are simply doubly commuting isometries. Therefore, the above theorem recovers orthogonal decompositions of doubly commuting isometries \cite{S, Slo}. A question of obvious interest consists in enlarging the above class of tuples of isometries that admit the orthogonal decomposition. To address this question, we now introduce our primary object of study, twisted isometries on Hilbert spaces.

\begin{definition}[$\clu_n$-twisted isometries]\label{def: twisted isom}
Let $n > 1$. Let $\{U_{ij}\}_{1 \leq i < j \leq n}$ be $\binom{n}{2}$ commuting unitaries on a Hilbert space $\mathcal{H}$, and suppose $U_{ji} := U_{ij}^*$, $1 \leq i < j \leq n$. An $n$-tuple of isometries $(V_1, \ldots ,V_n)$ on $\clh$ is called $\clu_n$-twisted isometry with respect to $\{U_{ij}\}_{i<j}$ if
\begin{equation}\label{equation: u_ntwisted}
V_i^*V_j=U_{ij}^*V_jV_i^* \text{ and }V_kU_{ij}=U_{ij}V_k \qquad (i,j,k=1, \ldots, n, \text{ and }i \neq j).
\end{equation}
\end{definition}

Sometimes we will suppress the reference of the unitaries $\{U_{ij}\}_{1 \leq i < j \leq n}$ and simply say that $(V_1, \ldots ,V_n)$ is a $\clu_n$-twisted isometry. Also we must point out that the commutativity assumption on $\{U_{ij}\}_{1 \leq i < j \leq n}$ is automatic for our purpose (see Remark \ref{remark: Uij = V*V*VV}).

Clearly, doubly non-commuting isometries are also $\clu_n$-twisted isometries with respect to $\{z_{ij} I_{\clh}\}_{i < j}$. On the other hand, as we shall see in Section \ref{sec: example}, $\clu_n$-twisted isometries form a large class of $n$-tuples of isometries which also includes a number of interesting examples. In fact, Section \ref{sec: example} is the central part of this paper, while one of the central results of this paper is the following generalization of Theorem \ref{thm: Pinto} to the case of $\clu_n$-twisted isometries (see Theorem \ref{thm: main decomposition}).

\begin{theorem*}
Each $\clu_n$-twisted isometry admits an orthogonal decomposition.
\end{theorem*}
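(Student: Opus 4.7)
The plan is to apply the classical Wold decomposition to each $V_i$ individually and to show that its two canonical summands are automatically reducing for every other $V_j$. The key preliminary step is to extract a hidden product-form twist: I claim that the defining relation $V_i^*V_j=U_{ij}^*V_jV_i^*$ in fact implies
$$V_iV_j=U_{ij}V_jV_i\qquad(i\ne j).$$
This I would prove by expanding $\|V_jV_i h-U_{ij}^*V_iV_j h\|^2$ for an arbitrary $h\in\clh$: the two norm squares each equal $\|h\|^2$, and the cross term $\langle V_jV_ih,U_{ij}^*V_iV_jh\rangle$ reduces, after moving $U_{ij}$ past $V_i^*$ and using the consequence $V_i^*V_jV_i=U_{ij}^*V_j$ of the defining relation, to $\|h\|^2$ as well, so the four terms cancel.

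Next, combining $V_iV_j=U_{ij}V_jV_i$ with its conjugate $V_jV_i^*=U_{ij}V_i^*V_j$ (obtained by rearranging the original relation), a direct computation gives $V_j(V_iV_i^*)=(V_iV_i^*)V_j$, and by iteration $V_j(V_i^mV_i^{*m})=(V_i^mV_i^{*m})V_j$ for every $m\ge0$. Passing to the strong operator limit, $V_j$ commutes with the orthogonal projection onto $\clh_i^u:=\bigcap_{m\ge0}V_i^m\clh$. Hence, for each $i$, the classical Wold summands $\clh_i^u$ and $\clh_i^s:=\bigoplus_{m\ge0}V_i^m\Ker V_i^*$ reduce every $V_k$, and the $2n$ associated orthogonal projections $P_i^u,P_i^s$ mutually commute.

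Finally, for each $A\subseteq\{1,\dots,n\}$ set
$$\clh_A=\Big(\bigcap_{i\in A}\clh_i^s\Big)\cap\Big(\bigcap_{j\in A^c}\clh_j^u\Big).$$
Since the $2n$ projections mutually commute and $P_i^s+P_i^u=I$ for each $i$, the family $\{\clh_A\}_{A\subseteq\{1,\dots,n\}}$ is pairwise orthogonal with orthogonal sum $\clh$, and every $\clh_A$ reduces every $V_k$. For $i\in A$ (respectively $j\in A^c$) the restriction $V_i|_{\clh_A}$ (respectively $V_j|_{\clh_A}$) is the restriction to a reducing subspace of the shift $V_i|_{\clh_i^s}$ (respectively the unitary $V_j|_{\clh_j^u}$), hence is itself a shift (respectively a unitary). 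I expect the main obstacle to be the first step --- extracting the product-form twist from an identity that a priori controls only $V_i^*V_j$; once this is in hand, the remaining steps run parallel to the doubly non-commuting case of Theorem~\ref{thm: Pinto}.
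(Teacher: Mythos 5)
Your proof is correct, and it takes a genuinely different route from the paper. Both you and the paper begin with the same key lemma (the paper's Lemma~\ref{lemma: commute}): the adjoint twist $V_i^*V_j=U_{ij}^*V_jV_i^*$ together with $V_i,V_j\in\{U_{ij}\}'$ forces the product twist $V_iV_j=U_{ij}V_jV_i$, and both prove it by expanding a norm square (the paper writes it as $X^*X=0$ for $X=V_1V_2-UV_2V_1$, which is the same computation). After that the arguments diverge. The paper works with the joint wandering subspaces $\clw_A=\bigcap_{i\in A}\Ker V_i^*$: it shows $\clw_A$ reduces each $V_j$ with $j\notin A$ and each $U_{st}$ (Lemmas~\ref{lemma: reducing}, \ref{lemma: wandering}, \ref{lemma: W reducing U}), then runs an induction on $n$, applying the classical Wold decomposition first to $V_1$ on $\clh$, then to $V_2|_{\clw_1}$, and so on; the payoff is that it produces directly the explicit form $\clh_A=\bigoplus_{k}V_A^k\bigl(\bigcap_l V_{I_n\setminus A}^l\clw_A\bigr)$ which is what Sections~\ref{sec: model and wand data}--\ref{sec: Classification} feed into the analytic model and classification results. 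Your proof instead follows the ``commuting projections'' strategy familiar from the doubly commuting case: once $V_iV_j=U_{ij}V_jV_i$ and its companion $V_jV_i^*=U_{ij}V_i^*V_j$ are in hand, $V_j$ commutes with $V_i^mV_i^{*m}$ for every $m$, hence with the SOT limit $P_i^u$, so the $2n$ projections $P_i^s,P_i^u$ all reduce every $V_k$ and mutually commute, and $\clh_A=\bigcap_{i\in A}\clh_i^s\cap\bigcap_{j\in A^c}\clh_j^u$ gives the decomposition by expanding $\prod_i(P_i^s+P_i^u)=I$. This is conceptually cleaner and avoids the induction entirely; the tradeoff is that it does not immediately exhibit the wandering-subspace presentation of $\clh_A$ that the paper needs downstream, though by the uniqueness of the decomposition (Proposition~\ref{prop: unique} and Corollary~\ref{cor: unique}) your $\clh_A$ necessarily coincides with theirs, and the wandering-subspace form can then be recovered by applying classical Wold inside each summand.
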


We wish to point out that our proof, even in this generality, is simpler than that of \cite{JP}. However, our proof also requires as background the classical von Neumann–Wold decomposition theorem.

\noindent Now we comment on the direct summands in the orthogonal decomposition of an isometry $V \in \clb(\clh)$ as in Theorem \ref{thm: classic}. One can easily prove \cite{S} that $\clh_{\{1\}}$ and $\clh_{\emptyset}$ in Theorem \ref{thm: classic} admits the following geometric representations
\begin{equation}\label{eqn: H_s and H_u}
\clh_{\{1\}} = \oplus_{j=0}^\infty V^j \clw \mbox{~and~} \clh_{\emptyset} = \cap_{j=0}^\infty V^j \clh,
\end{equation}
where $\clw = \ker V^*$. Moreover, the orthogonal decomposition in Theorem \ref{thm: classic} is unique in the following sense: Suppose $\cls_1$ and $\cls_2$ are reducing subspaces for $V$. If $V|_{\cls_1}$ is a shift, then $\cls_1 \subseteq \clh_{\{1\}}$. And, if $V|_{\cls_2}$ is a unitary, then $\cls_2 \subseteq \clh_{\emptyset}$. In particular, if $\cls_1 \oplus \cls_2 = \clh$, then $\cls_1 = \clh_{\{1\}}$ and $\cls_2 = \clh_{\emptyset}$.

\noindent In the setting of $\clu_n$-twisted isometries, we prove a similar geometric representation of each of the $2^n$ direct summands of the corresponding orthogonal decomposition. This is linked with the existence of the orthogonal decompositions (see Theorem \ref{thm: main decomposition}). Also we prove that the orthogonal decomposition is unique (see Corollary \ref{cor: unique}). These results form the subject of Section \ref{sec: main theorem}.

In Section \ref{sec: model and wand data}, we present analytic models of $\clu_n$-twisted isometries. Our model relies on two core concepts, namely, wandering subspaces and wandering data. We prove that the list of examples in Section \ref{sec: example} plays a pivotal role in the structure theory of $\clu_n$-twisted isometries.

In Theorem \ref{nuclearity}, we prove that the universal $C^*$-algebra generated by a $\clu_n$-twisted isometry, $n \geq 2$, is nuclear. This is the main content of Section \ref{sec: nuclear}.

Also, we intend with this paper to give a motivation for the study of (generalized) noncommutative tori associated with tuples of isometries, which is an analog of the classical anticommutation relations with unitary twists. However, here we will restrict ourselves to $\clu_n$-twisted isometries. For instance, in Section \ref{sec: Classification}, we introduce the twisted noncommutative tori for $\clu_n$-twisted isometries. Theorem \ref{thm: classification} states that the unitary equivalence classes of $\clu_n$-twisted isometries are in bijection with enumerations of $2^n$ unitary equivalence classes of unital representations of twisted noncommutative tori. In Corollary \ref{cor: classification of noncommutative tori}, we prove that the unitary equivalence classes of the non-zero irreducible representations of the $C^*$-algebras generated by $\clu_n$-twisted isometries are parameterized by the unitary equivalence classes of the non-zero irreducible representations of twisted $2^n$-tori.

Needless to say, the notion of $\clu_n$-twisted isometries is inspired by the earlier work on the classical rotation $C^*$-algebras and Heisenberg $C^*$-algebras at the level of unitaries \cite{AP, Packer, PR}. Some of our results are also motivated by \cite{JP}. However, on one hand, our results are more general, and on the other, our approach, even in the particular case of tuples of doubly non-commuting isometries, is significantly different and appears to be somewhat more natural.

Throughout the paper we follow the standard definition of unitarily equivalence: Two $n$-tuples $V = (V_1, \ldots, V_n)$ and $\tv = (\tv_1,\ldots, \tv_n)$ on Hilbert spaces $\clh$ and $\tilde{\clh}$, respectively, are said to be \textit{unitarily equivalent} if there exists a unitary $U: \clh \raro \tilde \clh$ such that $U V_i = \tv_i U$ for all $i=1, \ldots, n$. Also we use standard notation such as $\Z_+^n = \{k = (k_1, \ldots, k_n): k_i \in \Z_+\}$, $\mathbb{C}^n = \{z = (z_1, \ldots, z_n): z_i \in \mathbb{C}\}$, $z^k = z_1^{k_1} \cdots z_n^{k_n}$, and $V^k = V_1^{k_1} \cdots V_n^{k_n}$, whenever $k \in \Z_+^n$ and $V = (V_1, \ldots, V_n)$ on some Hilbert space.

\section{Examples}\label{sec: example}

This section introduces some basic concepts, and presents some (model) examples of $\clu_n$-twisted isometries. This also sets the stage for a more thorough treatment of $\clu_n$-twisted isometries in what follows. The present section is the central part of this paper.

Let $H^2(\D)$ denote the Hardy space over the unit disc $\D = \{z \in \mathbb{C}: |z| < 1\}$. Denote by $M_z$ the multiplication operator by the coordinate function $z$ on $H^2(\D)$, that is, $M_z f = zf$ for all $f \in H^2(\D)$. It is well known that $M_z$ is a shift of multiplicity one (as $\ker M_z^* = \mathbb{C}$). Now, let $H^2(\D^2)$ denote the Hardy space over the bidisc $\D^2$. Recall that $H^2(\D^2)$ is the Hilbert space of all square summable analytic functions on $\D^2$. That is, an analytic function $f(z) = \sum_{k \in \Z_+^2} \alpha_{k} z^{k}$ on $\D^2$ is in $H^2(\D^2)$ if and only if
\[
\|f\| := \Big(\sum_{k \in \Z_+^2} |a_{k}|^2\Big)^{\frac{1}{2}} < \infty.
\]
One can easily identify $H^2(\D^2)$ with $H^2(\D) \otimes H^2(\D)$ in a natural way: define $\tau : H^2(\D) \otimes H^2(\D) \raro H^2(\D^2)$ by $\tau(z^{k_1} \otimes z^{k_2}) = z_1^{k_1} z_2^{k_2}$, $k \in \Z_+^2$. Then $\tau$ is a unitary operator and
\[
\tau(M_z \otimes I_{H^2(\D)}) = M_{z_1} \tau \mbox{~and~} \tau(I_{H^2(\D)} \otimes M_z) = M_{z_2} \tau,
\]
where $M_{z_1}$ and $M_{z_2}$ are the multiplication operators by $z_1$ and $z_2$, respectively, on $H^2(\D^2)$. This construction works equally well for $H^2(\D^m)$, the Hardy space over $\D^m$, $m> 1$.

We are now ready for the main content of this section and begin with some elementary (but motivational) examples of $\clu_2$-twisted isometries.

\begin{example}\label{example 1}
It will be convenient to introduce a special class of diagonal operators parameterized by the circle group $\T$. For each $\lambda \in \T$, define (cf. \cite[proof of Lemma 1.2]{W})
\[
D[\lambda] z^m = \lambda^m z^m \qquad (m \in \Z_+).
\]
Clearly, $D[\lambda]$ is a unitary diagonal operator on $H^2(\D)$ and $D[\lambda]^* = D[\bar{\lambda}] =  \mbox{diag} (1, \bar{\lambda}, \bar{\lambda}^2, \ldots)$. It is easy to see that
\[
(M_z^* D[\lambda]) (z^m) =
\begin{cases}
\lambda^m z^{m-1} & \quad\text{if } m > 0
\\
0 &\quad \text{if } m=0,
\end{cases}
\]
and
\[
(D[\lambda] M_z^*) (z^m) =
\begin{cases}
\lambda^{m-1} z^{m-1} & \quad\text{if } m > 0
\\
0 &\quad \text{if } m=0,
\end{cases}
\]
and hence, $M_z^* D[\lambda] = \lambda D[\lambda] M_z^*$. Now we fix $\lambda \in \T$, and define $S_1$ and $S_2$ on $H^2(\D^2)$ by setting
\[
S_1 = M_z \otimes I_{H^2(\D)} \mbox{~and~} S_2 = D[\lambda] \otimes M_z.
\]
Therefore, $(S_1, S_2)$ is a pair of isometries on $H^2(\D^2)$, and $S_1^* S_2 = M_z^* D[\lambda] \otimes M_z$, and $S_2 S_1^* = D[\lambda] M_z^* \otimes M_z$. Then, $M_z^* D[\lambda] = \lambda D[\lambda] M_z^*$ implies $S_1^* S_2 = \lambda S_2 S_1^*$. We now consider the Hilbert space $\clh = H^2(\D^2) \oplus H^2(\D^2)$, and isometries $V_1= \mbox{diag}(S_1, S_2)$ and $V_2= \mbox{diag}(S_2, S_1)$ on $\clh$. If we set $U = \mbox{diag}(\bar{\lambda} I_{H^2(\D^2)}, {\lambda}I_{H^2(\D^2)})$, then
\[
V_1^* V_2 = \begin{bmatrix}S_1^* S_2 & 0
\\ 0 & S_2^* S_1
\end{bmatrix} = \begin{bmatrix}\lambda S_2S_1^* & 0 \\ 0 & \bar{\lambda} S_1S_2^*
\end{bmatrix}=
\begin{bmatrix}\lambda I_{H^2(\D^2)} & 0\\ 0 & \bar{\lambda}I_{H^2(\D^2)} \end{bmatrix}
V_2V_1^*,
\]
which implies that $V_1^* V_2 = U^* V_2 V_1^*$. Since $V_1, V_2 \in \{U\}'$, it follows that the pair $(V_1, V_2)$ is a (reducible) $\clu_2$-twisted isometry on $\clh$ with $\clu_2 = \{ U \}$.
\end{example}

Note that for each $\lambda \in \T$, the pairs $(M_z, D[\lambda])$ and $(S_1, S_2)$, defined as above, are doubly non-commuting isometries. This was considered and analyzed in the context of models of doubly noncommuting isometries in \cite{JP} (although their presentation is somewhat different than ours).

We continue and extend the discussion of Hardy space over $\D^m$, $m > 1$. For a Hilbert space $\cle$, we denote by $H^2_{\cle}(\D^m)$ the $\cle$-valued Hardy space over $\D^m$. Note that $H^2_{\cle}(\D^m)$ is the Hilbert space of all square summable analytic functions on $\D^m$ with coefficients in $\cle$. We simply set $H^2(\D^m) = H^2_{\mathbb{C}}(\D^m)$. In view of the natural identification
\[
z^{k} \eta \leftrightarrow  z^{k_1} \otimes \cdots \otimes z^{k_m} \otimes \eta \leftrightarrow z^{k} \otimes \eta \qquad (k \in \Z_+^m, \eta \in \cle),
\]
up to unitary equivalence, we have
\[
H^2_{\cle}(\D^m) = \underbrace{ H^2(\D) \otimes \cdots \otimes H^2(\D)}_{m-\mbox{times}} \; \otimes \; \cle = H^2(\D^m) \otimes \cle.
\]
In this setting, for each fixed $i =1, \ldots, m$, we also have (again, up to unitary equivalence)
\[
M_{z_i} = (I_{H^2(\D)} \otimes \cdots I_{H^2(\D)} \otimes \underbrace{M_{z}}_{i-\mbox{th}} \otimes I_{H^2(\D)} \otimes \cdots \otimes I_{H^2(\D)}) \otimes I_{\cle} = M_{z_i} \otimes I_{\cle},
\]
where $M_{z_i} f = z_i f$ for any $f$ either in $H^2_{\cle}(\D^m)$ or in $H^2(\D^m)$ (whichever is the case should be clear from the context). For simplicity, and whenever appropriate, we shall use the above identification interchangeably. Moreover, the above tensor product representations of the multiplication operators readily imply that $(M_{z_1},\ldots, M_{z_m})$ on $H^2_{\cle}(\D^m)$ is \textit{doubly commuting}, that is, $M_{z_i} M_{z_j} = M_{z_j} M_{z_i}$ and $M_{z_i}^* M_{z_p} = M_{z_p} M_{z_i}^*$ for all $i,j,p=1, \ldots, n$ and $i \neq p$.

We need to define another important notion before we proceed.

\begin{definition}\label{defn: index operator}
Let $j \in \{1, \ldots, m\}$. Given a Hilbert space $\cle$ and a unitary $U \in \clb(\cle)$, the \textit{$j$-th diagonal operator with symbol $U$}
% (in short the \textit{diagonal operator} if $j$ and $U$ are clear from the context)
is the unitary operator $D_j[U]$ on $H^2_{\cle}(\D^m)$ defined by
\[
D_j[U] (z^k \eta) = z^k (U^{k_j} \eta) \qquad (k \in \Z_+^m, \eta \in \cle).
\]
\end{definition}

We remind the reader that $k = (k_1, \ldots, k_m)$. In particular, if $m=1$ and $\cle = \mathbb{C}$, then $U$ is given by $U = \lambda$ for some $\lambda \in \T$, and then, as introduced earlier, $D_1[\lambda]$ is the diagonal operator $diag(1, \lambda, \lambda^2, \ldots)$ on $H^2(\D)$.

\begin{lem}\label{lemma:key index}
Let $\cle$ be a Hilbert space, and let $U$ and $\tilde{U}$ be commuting unitaries in $\clb(\cle)$. Suppose $i, j \in \{1, \ldots, n\}$. Then
\begin{enumerate}
\item $D_j[U]^* = D_j[U^*]$ and $D_i[U] D_j[\tilde{U}] = D_j[\tilde{U}] D_i[U]$.
\item $M_{z_i} D_j[U] = D_j[U] M_{z_i}$ whenever $i\neq j$.
\item $M_{z_i}^* D_i[U] = (I_{H^2(\D^n)} \otimes U)D_i[U] M_{z_i}^*$.
\end{enumerate}
\end{lem}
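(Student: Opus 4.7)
The plan is to verify all three identities by direct calculation on elementary tensors $z^k \otimes \eta$, $k \in \Z_+^m$, $\eta \in \cle$, since their linear span is dense in $H^2_\cle(\D^m)$ and all operators in sight are bounded. Definition~\ref{defn: index operator} is tailor-made for this: $D_j[U]$ acts on the analytic variable merely by reading off the exponent $k_j$ and applying $U^{k_j}$ to the coefficient $\eta$. So everything reduces to bookkeeping on the multi-index $k$.

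For (1), I would first compute
\[
\langle D_j[U](z^k \otimes \eta),\, z^l \otimes \mu\rangle = \delta_{kl}\,\langle U^{k_j}\eta, \mu\rangle = \delta_{kl}\,\langle \eta, U^{*\,k_j}\mu\rangle,
\]
which identifies $D_j[U]^*(z^l \otimes \mu) = z^l \otimes U^{*\,l_j}\mu = D_j[U^*](z^l \otimes \mu)$. For the commutation, both $D_i[U]D_j[\tilde U]$ and $D_j[\tilde U]D_i[U]$ send $z^k \otimes \eta$ to $z^k \otimes U^{k_i}\tilde U^{k_j}\eta$, using $U\tilde U = \tilde U U$.

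For (2), note that $M_{z_i}(z^k\otimes\eta) = z^{k+e_i}\otimes \eta$, and when $i\neq j$ the $j$-th coordinate is unchanged, i.e.\ $(k+e_i)_j = k_j$. Thus both $M_{z_i} D_j[U]$ and $D_j[U] M_{z_i}$ map $z^k\otimes\eta$ to $z^{k+e_i}\otimes U^{k_j}\eta$.

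Part (3) is where careful index-tracking matters, and this is the step most prone to slip-ups; in fact it already encodes the twist that drives the paper, compare \cite{W}. Here $M_{z_i}^*(z^k\otimes\eta)$ equals $z^{k-e_i}\otimes\eta$ when $k_i\geq 1$ and $0$ when $k_i=0$; crucially, $M_{z_i}^*$ lowers the $i$-th exponent by one, so applying $D_i[U]$ afterwards uses exponent $k_i-1$ instead of $k_i$. Explicitly, for $k_i\geq 1$,
\[
M_{z_i}^* D_i[U](z^k\otimes\eta) = M_{z_i}^*(z^k\otimes U^{k_i}\eta) = z^{k-e_i}\otimes U^{k_i}\eta,
\]
while
\[
(I_{H^2(\D^n)}\otimes U)\,D_i[U]\,M_{z_i}^*(z^k\otimes\eta) = (I\otimes U)\bigl(z^{k-e_i}\otimes U^{k_i-1}\eta\bigr) = z^{k-e_i}\otimes U^{k_i}\eta,
\]
and both sides vanish when $k_i=0$. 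The deficit of one power of $U$ incurred by the lowering $M_{z_i}^*$ is exactly compensated by the extra factor $I_{H^2(\D^n)}\otimes U$, giving the claimed identity. Once verified on the spanning vectors $z^k\otimes\eta$, boundedness of the operators extends all three identities to $H^2_\cle(\D^m)$.
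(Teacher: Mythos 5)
Your proof is correct and follows essentially the same route as the paper's: verify each identity by direct computation on the monomials $z^k \otimes \eta$, tracking how $M_{z_i}$, $M_{z_i}^*$, and $D_j[\cdot]$ shift and read the multi-index. The inner-product computation you give for the adjoint in part (1) is a small elaboration the paper omits (it simply cites the definition), but the substance and bookkeeping, especially the key observation in part (3) that lowering $k_i$ before applying $D_i[U]$ costs one factor of $U$, match the paper's argument.
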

\begin{proof}
The first assertion follows from the definition of diagonal operators, and the commutativity of $U$ and $\tilde{U}$. To prove $(2)$, we assume that $k \in \Z_+^n$ and $\eta \in \cle$. Suppose $i \neq j$. We have on one hand
\[
(D_j[U] M_{z_i}) (z^k \eta) = D_j[U] (z^{k+e_i} \eta)  = z^{k+e_i} (U^{k_j}\eta),
\]
and on the other hand
\[
(M_{z_i} D_j[U]) (z^k \eta) = M_{z_i}(z^k (U^{k_j} \eta)) = z^{k+e_i} (U^{k_j} \eta),
\]
where $e_i$ denotes the element in $\Z_+^n$ with $1$ in the $i$-th slot and zero elsewhere. Here we used $i \neq j$ which implies that $k_j$ remains unchanged. For part $(3)$, we compute
\[
(M_{z_i}^* D_i[U]) (z^k \eta) = M_{z_i}^* (z^k U^{k_i} \eta) = \begin{cases}
z^{k-e_i} (U^{k_i} \eta) & \quad\text{if } k_i \neq 0
\\
0 &\quad \text{if } k_i =0.
\end{cases}
\]
On the other hand, since $D_i[U] (z^{k-e_i} \eta) = z^{k-e_i} (U^{k_i - 1} \eta)$ for $k_i \neq 0$, we have
\[
(D_i[U] M_{z_i}^*) (z^k \eta)
= \begin{cases}
z^{k-e_i} (U^{k_i - 1} \eta) & \quad\text{if } k_i \neq 0
\\
0 &\quad \text{if } k_i =0,
\end{cases}
\]
which completes the proof of part $(3)$.
\end{proof}

We now turn to more general examples of $\clu_n$-twisted isometries.  Let $\cle$ be a Hilbert space, and let $\{U_{ij}: i,j=1, \ldots, n, i\neq j\} \subseteq \clb(\cle)$ be a family of commuting unitaries. Suppose $U_{ji}:=U_{ij}^*$ for all $i \neq j$. Fix $m \in \{1, \ldots, n\}$. Consider $(n-m)$ unitary operators $\{ U_{m+1}, \ldots, U_{n}\}$ in $\clb(\cle)$. Suppose
\[
U_i U_j=U_{ij}U_jU_i \text{ and }U_iU_{pq}=U_{pq}U_i,
\]
for all $i,j = m+1 \ldots, n$, $i\neq j$, and $p,q=1,\ldots, n$, $p\neq q$. Set $M_1 = M_{z_1}$, and for each $2 \leq i \leq m$, define
\[
M_i =  M_{z_i} (D_1 [U_{i 1}] D_2 [U_{i 2}] \cdots D_{i-1}[U_{i i-1}]),
\]
and, for each $m+1 \leq j \leq n$, define
\[
M_j = (D_1[U_{j 1}] \cdots D_m[U_{j m}]) (I_{H^2(\D^m)} \otimes U_j).
\]
Then, by construction, $M = (M_1, \ldots, M_n)$ is an $n$-tuple of isometries on $H^2_{\cle}(\D^m)$. Moreover, $M$ is a $\clu_n$-twisted isometry with respect to $\{I_{H^2(\D^m)} \otimes U_{ij}\}_{i < j}$. This can be proved by repeated applications of Lemma \ref{lemma:key index}. For instance, if $1 < i < j$, then
\[
M_i^* M_j = (I_{H^2(\D^m)} \otimes U_{ij})^* M_j M_i^*,
\]
follows from the fact that $M_{z_i}^* M_{z_j} = M_{z_j} M_{z_i}^*$, and, notably, from part $(3)$ of Lemma \ref{lemma:key index} that $M_{z_i}^* D_i[U_{ji}] = (I_{H^2(\D^m)} \otimes U_{ji})D_i[U_{ji}] M_{z_i}^*$. We summarize this with the following proposition:

\begin{prop}
Let $\cle$ be a Hilbert space, and let $\{U_{ij}: i,j = 1, \ldots, n, i \neq j\}$ be a commuting family of unitaries on $\cle$ such that $U_{ji}:=U_{ij}^*$ for all $i \neq j$. Fix $m \in \{1, \ldots, n\}$ and consider $(n-m)$ unitary operators $\{ U_{m+1}, \ldots, U_{n}\}$ in $\clb(\cle)$ such that
\[
U_i U_j=U_{ij}U_jU_i \text{ and }U_iU_{pq}=U_{pq}U_i,
\]
for all $m+1\leq i\neq j \leq n$, and $1\leq p\neq q \leq n$. Let $M_1 = M_{z_1}$ and
\[
M_i =
\begin{cases}
M_{z_i} \Big(D_1 [U_{i 1}] D_2 [U_{i 2}] \cdots D_{i-1}[U_{i i-1}]\Big) & \mbox{if } 2 \leq i \leq m
\\
\Big(D_1[U_{i 1}] \cdots D_m[U_{i m}]\Big) \Big(I_{H^2(\D^m)} \otimes U_i\Big) & \mbox{if } m+1 \leq i \leq n.
\end{cases}
\]
Then $M_1,\ldots,M_m$ are shifts, $M_{m+1},\ldots,M_n$ are unitaries, and $(M_1, \ldots, M_n)$ is a $\clu_n$-twisted isometry on $H^2_{\cle}(\D^m)$ with respect to $\{I_{H^2(\D^m)} \otimes U_{ij}\}_{i < j}$.
\end{prop}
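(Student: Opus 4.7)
The plan has three parts: (i) each $M_i$ is an isometry that is a shift or unitary as claimed; (ii) the twisted commutation $M_i^* M_j = (I_{H^2(\D^m)} \otimes U_{ij})^* M_j M_i^*$ holds for $i < j$; and (iii) each $M_k$ commutes with every $I_{H^2(\D^m)} \otimes U_{ij}$. Given (iii), the $i > j$ case of (ii) follows by taking adjoints, so it suffices to treat $i < j$. Part (iii) is immediate: $I_{H^2(\D^m)} \otimes U_{ij}$ acts only on $\cle$ and so trivially commutes with every $M_{z_k}$; it commutes with $D_\ell[U_{pq}]$ because on $z^\alpha \otimes \eta$ both sides equal $z^\alpha \otimes U_{ij} U_{pq}^{\alpha_\ell} \eta$ by pairwise commutativity of the $\{U_{pq}\}$; and it commutes with $I_{H^2(\D^m)} \otimes U_k$ since $U_k U_{ij} = U_{ij} U_k$ by hypothesis. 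Part (i) is also quick: for $m < i \le n$, $M_i$ is a product of unitaries; for $2 \le i \le m$, write $M_i = M_{z_i} D$ with $D = D_1[U_{i1}] \cdots D_{i-1}[U_{ii-1}]$, and observe that each factor of $D$ commutes with $M_{z_i}$ by Lemma \ref{lemma:key index}(2) (the indices $1,\ldots,i-1$ are distinct from $i$). Hence $D$ is a unitary commuting with $M_{z_i}$, so $M_i^{*k} = D^{*k} M_{z_i}^{*k}$, and since $M_{z_i}$ is pure on $H^2_\cle(\D^m)$, $M_i$ is a shift.

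The substantive work is in (ii), which I would split by the positions of $i < j$ relative to $m$. The core tool throughout is Lemma \ref{lemma:key index}: the $D_\ell[\cdot]$'s mutually commute (using that all $U_{pq}$'s commute); $M_{z_i}^*$ passes freely through $D_\ell[U]$ when $\ell \neq i$; and the twist is produced by the identity $M_{z_i}^* D_i[U] = (I_{H^2(\D^m)} \otimes U)\, D_i[U] M_{z_i}^*$. In Case A ($i < j \le m$) I would first invoke the doubly commuting relation $M_{z_i}^* M_{z_j} = M_{z_j} M_{z_i}^*$, then use that $D_{(i)} := D_1[U_{i1}] \cdots D_{i-1}[U_{ii-1}]$ contains only indices $< i < j$ (hence commutes with $M_{z_j}$), and finally slide $M_{z_i}^*$ through $D_{(j)} = D_1[U_{j1}] \cdots D_{j-1}[U_{jj-1}]$. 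Since $i \in \{1,\ldots,j-1\}$, this slide meets exactly one factor $D_i[U_{ji}]$, producing the single twist $I \otimes U_{ji} = (I \otimes U_{ij})^*$; reassembling yields the claim. Case B ($i \le m < j$) is completely analogous: only $M_{z_i}^*$ has to move, the unitary $I \otimes U_j$ passes through everything, and the same single twist from $D_i[U_{ji}] \subset D_{(j)}$ produces the right-hand side.

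Case C ($m < i < j$) contains no $M_{z_k}^*$'s; all diagonal operators and $I \otimes U_k$'s pass through each other using the pairwise commutativity of $\{U_{pq}\}$ and the assumption $U_k U_{pq} = U_{pq} U_k$. The twist now comes entirely from the hypothesis $U_i U_j = U_{ij} U_j U_i$, which, using that $U_{ij}$ commutes with $U_i$, rearranges to $U_i^* U_j = U_{ij}^* U_j U_i^*$, giving the required factor of $(I \otimes U_{ij})^*$. The main obstacle is the bookkeeping in Case A, where one must simultaneously track the commutation of $D_{(i)}^*$ past $M_{z_j}$, the sliding of $M_{z_i}^*$ past $D_{(j)}$, and the pairwise commutativity of all the diagonal factors; once the key identity $M_{z_i}^* D_{(j)} = (I \otimes U_{ji})\, D_{(j)} M_{z_i}^*$ is isolated (one line from parts (2) and (3) of Lemma \ref{lemma:key index}), the remainder collapses to a short calculation, and the proposition follows as the conjunction of (i)--(iii).
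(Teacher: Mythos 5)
Your proposal is correct and follows essentially the same approach as the paper, which sketches the argument as ``repeated applications of Lemma~\ref{lemma:key index}'' and illustrates it only for one representative case (the twist produced by part~(3), namely $M_{z_i}^* D_i[U_{ji}] = (I \otimes U_{ji}) D_i[U_{ji}] M_{z_i}^*$). You have simply filled in the full case analysis by position of $i,j$ relative to $m$, including the case $m < i < j$ where the twist comes instead from the hypothesis $U_iU_j = U_{ij}U_jU_i$ on the auxiliary unitaries rather than from Lemma~\ref{lemma:key index}(3), and you have also supplied the routine verification of the isometry/shift/unitary claims and the commutation with $I\otimes U_{ij}$ that the paper leaves implicit.
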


We will return to this in the context of analytic models and complete unitary invariants in Sections \ref{sec: model and wand data} and \ref{sec: invariants}, respectively.

\section{Orthogonal decompositions}\label{sec: main theorem}

The principal goal of this section is to prove that $\clu_n$-twisted isometries admit orthogonal decomposition. We begin by fixing some notations (once again, we stress that $n > 1$).

\begin{enumerate}
\item $I_n = \{1, \ldots, n\}$. $A = \{i_1, \ldots, i_m\} \subseteq I_n$, $i_1 < \cdots < i_m$, whenever $A \neq \emptyset$.
\item If $V = (V_1, \ldots, V_n)$, then $V_A = (V_{i_1}, \ldots, V_{i_m})$ whenever $A = \{i_1, \ldots, i_m\} \subseteq I_n$.
\item $V_A^{k} = V_{i_1}^{k_1} \cdots V_{i_m}^{k_m}$ whenever $k = (k_1, \ldots, k_m) \in \Z_+^m$ and $A =\{i_1, \ldots, i_m\} \subseteq I_n$.
\item $\clw_A = \bigcap_{i \in A} \ker V_i^*$ for all non-empty $A \subseteq I_n$, $\clw_{\emptyset} := \clh$, and $|\emptyset| := 0$.
\end{enumerate}

The following result essentially says that $\clu_n$-twisted isometries are ``twisted doubly commuting'' (see \cite[page 2671]{Pelle} for the scalar case).

\begin{lem}\label{lemma: commute}
Let $U$ be a unitary and $(V_1, V_2)$ be a pair of isometries on $\clh$. Suppose $V_1, V_2 \in \{U\}'$ and $V_1^* V_2 = U^* V_2 V_1^*$. Then $V_1 V_2 = U V_2 V_1$.
\end{lem}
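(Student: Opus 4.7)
The plan is to prove the equivalent statement $X^*X = 0$, where $X := V_1V_2 - UV_2V_1$. The merit of this reformulation is that expanding the product $X^*X$ and using only that $V_1,V_2$ are isometries, $U$ is unitary, and $U$ commutes with both $V_i$ (so by taking adjoints $U^*$ commutes with both $V_i^*$ as well) collapses four of the resulting terms down to $2I$, leaving the compact expression
\[
X^*X \;=\; 2I \;-\; V_2^*V_1^*UV_2V_1 \;-\; V_1^*V_2^*U^*V_1V_2.
\]
The whole proof then reduces to showing that each of the two remaining ``mixed'' terms equals $I$.

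For the first mixed term, I would slide the $U$ to the outside using the commutation hypothesis and then insert the twisted relation $V_1^*V_2 = U^*V_2V_1^*$ into the middle, which after two applications of $V_i^*V_i = I$ reduces $V_2^*V_1^*V_2V_1$ to $U^*$; the outer $U$ then absorbs this to give $I$. For the second mixed term, the plan is to first take the adjoint of the twisted relation to obtain $V_2^*V_1 = V_1V_2^*U$, and then run the symmetric manoeuvre, this time reducing $V_1^*V_2^*V_1V_2$ to $U$, which the outer $U^*$ absorbs. Substituting these into the expression above yields $X^*X = 2I - I - I = 0$, and hence $X = 0$.

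I do not expect any serious obstacle here. The only small technicality is keeping careful track of where the $U$'s sit when they are slid past the various $V_j$ and $V_j^*$ factors, but this is automatic from the commutation assumption together with its adjoint form. The argument is purely algebraic and does not invoke the von Neumann--Wold machinery developed elsewhere in the paper; in spirit, it simply makes precise the observation that the twisted anticommutation relation $V_1^*V_2 = U^*V_2V_1^*$ encodes the ``same amount of non-commutativity'' as the twisted commutation relation $V_1V_2 = UV_2V_1$.
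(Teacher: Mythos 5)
Your proposal is correct and is essentially identical to the paper's proof: both set $X = V_1V_2 - UV_2V_1$, expand $X^*X$ to $2I$ minus two mixed terms, and show each mixed term equals $I$ by inserting the twisted relation (and its adjoint) together with the commutation of $U$ with $V_1,V_2$. The paper compresses the verification of the mixed terms into ``one easily verifies,'' while you spell it out, but the underlying computation is the same.
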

\begin{proof}
If we denote $X = V_1 V_2 - U V_2 V_1$, then
\[
X^* X = (V_2^* V_1^* - U^* V_1^* V_2^*)(V_1 V_2 - U V_2 V_1) = 2I - U V_2^* V_1^* V_2 V_1 - U^* V_1^* V_2^* V_1 V_2.
\]
Using $V_1^*V_2 = U^* V_2 V_1^*$, one easily verifies that $U V_2^* V_1^* V_2 V_1 = U^* V_1^* V_2^* V_1 V_2 = I$. This completes the proof that $X^*X = 0$ and hence $V_1 V_2 = U V_2 V_1$.
\end{proof}

In particular, if $(V_1, \ldots, V_n)$ is a $\clu_n$-twisted isometry, then $V_iV_j = U_{ij} V_j V_i$ for all $i \neq j$. We note that the converse of the above lemma is not true \cite{Pelle}.

\begin{remark}\label{remark: Uij = V*V*VV}
The commutativity assumption of $\{U_{ij}\}_{1 \leq i < j \leq n}$ in the definition of $\clu_n$-twisted isometries (see Definition \ref{def: twisted isom}) is automatic in the following sense: Let $\{U_{ij}\}_{1 \leq i < j \leq n}$ be an $\binom{n}{2}$-tuple of unitaries on $\clh$, and let $(V_1, \ldots ,V_n)$ be an $n$-tuple of isometries on $\clh$. Let $U_{ji} := U_{ij}^*$ for all $1 \leq i < j \leq n$, and suppose $V_p \in \{U_{st}: s \neq t\}'$ for all $p=1, \ldots, n$. Then $U_{ij} U_{st} = U_{st} U_{ij}$ for all $i \neq j$ and $s \neq t$. Indeed, we first observe that $V_i^* V_j = U^*_{ij} V_j V_i^*$ and $V_i, V_j \in \{U_{st}: s \neq t\}'$ implies
\begin{equation}\label{eqn: U_{ij} = V_i^* V_j^* V_i V_j}
U_{ij} = V_i^* V_j^* V_i V_j \qquad (i \neq j).
\end{equation}
Hence we obtain $U_{ij} U_{st} = (V_i^* V_j^* V_i V_j)U_{st} = U_{st} U_{ij}$.
\end{remark}

The following elementary lemmas will play an important role. Throughout these lemmas, $V = (V_1, \ldots, V_n)$ will be a $\clu_n$-twisted isometry, and $A \subseteq I_n$. We begin with reducibility of wandering subspaces.

\begin{lem}\label{lemma: reducing}
$\clw_A$ reduces $V_j$ for all $j \in A^c$.
\end{lem}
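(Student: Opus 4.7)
The goal is to show that for any $A \subseteq I_n$ and any $j \in A^c$, both $V_j \clw_A \subseteq \clw_A$ and $V_j^* \clw_A \subseteq \clw_A$. Since $\clw_A = \bigcap_{i \in A} \ker V_i^*$, it suffices to check, for each $w \in \clw_A$ and each $i \in A$, that $V_i^*(V_j w) = 0$ and $V_i^*(V_j^* w) = 0$. Because $j \in A^c$ and $i \in A$, we always have $i \neq j$, which is exactly the regime in which the twisted relations of Definition~\ref{def: twisted isom} apply.

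For invariance under $V_j$, I would simply invoke the defining relation $V_i^* V_j = U_{ij}^* V_j V_i^*$ from \eqref{equation: u_ntwisted}: it gives
\[
V_i^* (V_j w) = U_{ij}^* V_j V_i^* w = 0,
\]
since $V_i^* w = 0$ by hypothesis. Hence $V_j w \in \clw_A$.

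For invariance under $V_j^*$, the idea is to produce an analogous ``twisted'' commutation relation for the adjoints. By Lemma~\ref{lemma: commute} applied to the pair $(V_i, V_j)$ and the unitary $U_{ij}$ (legal because $V_i, V_j \in \{U_{ij}\}'$ and $V_i^*V_j = U_{ij}^* V_j V_i^*$), we obtain $V_i V_j = U_{ij} V_j V_i$. Taking adjoints yields $V_j^* V_i^* = V_i^* V_j^* U_{ij}^*$, and using that $V_i^*, V_j^*$ commute with $U_{ij}$ this rearranges to
\[
V_i^* V_j^* = U_{ij}\, V_j^* V_i^*.
\]
Applying this to $w$ gives $V_i^*(V_j^* w) = U_{ij} V_j^* V_i^* w = 0$, so $V_j^* w \in \clw_A$.

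Combining the two inclusions, $\clw_A$ is $V_j$-reducing for every $j \in A^c$. The only non-routine step is producing the adjoint twist relation, which is precisely what Lemma~\ref{lemma: commute} supplies; everything else is a direct manipulation of the hypotheses, so I do not anticipate any real obstacle.
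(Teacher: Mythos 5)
Your proof is correct and follows essentially the same route as the paper: the invariance of $\clw_A$ under $V_j$ is a direct application of the defining relation $V_i^*V_j = U_{ij}^*V_jV_i^*$, and the invariance under $V_j^*$ is obtained, exactly as in the paper, by passing to the adjoint version $V_i^*V_j^* = U_{ij}V_j^*V_i^*$ via Lemma~\ref{lemma: commute} and the commutativity of the $V_k$'s with the $U_{st}$'s. The only difference is that you spell out the derivation of the adjoint relation in a bit more detail than the paper does.
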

\begin{proof}
Suppose $\eta \in \clw_A$, that is, $V_i^{\ast}\eta = 0$ for all $i \in A$. Suppose $j \notin A$. Since $V_i^{\ast}(V_j\eta)=U^*_{ij} V_jV_i^{\ast}\eta = 0$, we have $V_j \clw_A \subseteq \ker V_i^{\ast}$ for all $i\in A$. Thus $V_j \clw_A \subseteq \clw_A$. Also observe that by Lemma \ref{lemma: commute}, we have $V_i^* V_j^* = U_{ij} V_j^* V_i^*$, and hence, as before, $V_j^* \clw_A \subseteq \clw_A$.
\end{proof}

In particular, $V_j|_{\clw_A}$ is an isometry on $\clw_A$. It is now natural to examine $\ker (V_j|_{\clw_A})^*$. Evidently, $\ker (V_j|_{\clw_A})^* = \clw_A \ominus V_j \clw_A$.

\begin{lem}\label{lemma: wandering}
$\clw_A \ominus V_j \clw_A = \clw_{A \cup \{j\}}$ for all $j \in A^c$.
\end{lem}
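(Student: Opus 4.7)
The plan is to deduce this essentially as a direct consequence of Lemma \ref{lemma: reducing}, together with the elementary fact that for any isometry $T$ on a Hilbert space $\clk$, one has $\clk \ominus T\clk = \ker T^*$. So the work has really been done already, and the present lemma is basically a restatement of Lemma \ref{lemma: reducing} combined with the definition of $\clw_A$.

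More concretely, I would proceed as follows. Fix $j \in A^c$. By Lemma \ref{lemma: reducing}, the subspace $\clw_A$ reduces $V_j$, so $V_j|_{\clw_A}$ is an isometry on $\clw_A$ and moreover $(V_j|_{\clw_A})^* = V_j^*|_{\clw_A}$ (because $V_j^* \clw_A \subseteq \clw_A$, so no compression is needed). The standard identity for any isometry $T$ on $\clk$, namely $\clk \ominus T\clk = \ker T^*$, applied with $T = V_j|_{\clw_A}$ and $\clk = \clw_A$, then yields
\[
\clw_A \ominus V_j \clw_A = \ker\bigl((V_j|_{\clw_A})^*\bigr) = \{x \in \clw_A : V_j^* x = 0\} = \clw_A \cap \ker V_j^*.
\]

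Finally, unravelling the definition $\clw_A = \bigcap_{i \in A} \ker V_i^*$ gives
\[
\clw_A \cap \ker V_j^* = \Bigl(\bigcap_{i \in A} \ker V_i^*\Bigr) \cap \ker V_j^* = \bigcap_{i \in A \cup \{j\}} \ker V_i^* = \clw_{A \cup \{j\}},
\]
which is the desired equality. The case $A = \emptyset$ is handled by the same argument, using the convention $\clw_\emptyset = \clh$, since then the identity reduces to $\clh \ominus V_j \clh = \ker V_j^* = \clw_{\{j\}}$.

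There is no real obstacle here; the only point worth underlining is that the reducing property from Lemma \ref{lemma: reducing} is what lets us identify $(V_j|_{\clw_A})^*$ with $V_j^*|_{\clw_A}$ rather than with a compression $P_{\clw_A} V_j^*|_{\clw_A}$, and without this one could not conclude that the kernel of the adjoint inside $\clw_A$ is literally $\clw_A \cap \ker V_j^*$. So if anything, the careful step is just recording that $\clw_A$ is $V_j^*$-invariant (not merely $V_j$-invariant), which was already verified in Lemma \ref{lemma: reducing} via $V_i^* V_j^* = U_{ij} V_j^* V_i^*$ (a consequence of Lemma \ref{lemma: commute}).
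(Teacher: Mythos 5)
Your proof is correct and follows the same route as the paper: both hinge on Lemma \ref{lemma: reducing} to get that $\clw_A$ reduces $V_j$, so that $(V_j|_{\clw_A})^* = V_j^*|_{\clw_A}$ and the standard identity $\clk \ominus T\clk = \ker T^*$ for an isometry $T$ gives $\clw_A \ominus V_j \clw_A = \clw_A \cap \ker V_j^* = \clw_{A \cup \{j\}}$. You have simply spelled out in more detail what the paper compresses into the remark that $V_j$ is block diagonal with respect to $\clh = \clw_A \oplus \clw_A^\perp$.
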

\begin{proof}
The goal is to show that $\clw_A \ominus V_j \clw_A = \clw_{A} \cap \clw_j$. Indeed, this follows from Lemma \ref{lemma: reducing}: $\clw_A$ reduces $V_j$, and hence $V_j = \mbox{diag}(V_j|_{\clw_A}, V_j|_{\clw_A^{\perp}})$ on $\clh = \clw_A \oplus \clw_A^{\perp}$.
\end{proof}

We now turn to the reducibility property of wandering subspaces of corresponding unitary operators.

\begin{lem}\label{lemma: W reducing U}
$\clw_A$ reduces $U_{ij}$, and $U_{ij} \clw_A = \clw_A$ for all $i \neq j$.
\end{lem}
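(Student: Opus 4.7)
The plan is to exploit the commutation relation $V_k U_{ij} = U_{ij} V_k$ (part of the definition of a $\clu_n$-twisted isometry) in order to show that each $U_{st}$ sends $\clw_A = \bigcap_{i \in A} \ker V_i^*$ into itself, and then use the fact that $U_{st}$ is unitary to upgrade this inclusion to equality. There is no real obstacle here; the argument is a one-line verification.

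First I would take adjoints of the commutation relation $V_k U_{st} = U_{st} V_k$ to obtain $V_k^* U_{st}^* = U_{st}^* V_k^*$, and then sandwich by $U_{st}$ on both sides (or simply note that commuting with a unitary is equivalent to commuting with its inverse) to deduce $V_k^* U_{st} = U_{st} V_k^*$ and $V_k^* U_{st}^* = U_{st}^* V_k^*$ for every index $k$ and every pair $s \neq t$. Thus $V_k^*$ commutes with both $U_{st}$ and $U_{st}^*$.

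Next, fix $\eta \in \clw_A$, so that $V_i^* \eta = 0$ for all $i \in A$. For each such $i$, the commutation just established gives
\[
V_i^*(U_{st}\eta) = U_{st}(V_i^*\eta) = 0 \quad \text{and} \quad V_i^*(U_{st}^*\eta) = U_{st}^*(V_i^*\eta) = 0,
\]
so both $U_{st}\eta$ and $U_{st}^*\eta$ lie in $\clw_A$. Hence $\clw_A$ reduces $U_{st}$.

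Finally, to upgrade $U_{st}\clw_A \subseteq \clw_A$ to equality, I would apply $U_{st}$ to the inclusion $U_{st}^*\clw_A \subseteq \clw_A$ and use unitarity of $U_{st}$ to obtain $\clw_A = U_{st}U_{st}^*\clw_A \subseteq U_{st}\clw_A$. Combined with the reverse inclusion, this gives $U_{st}\clw_A = \clw_A$, completing the proof.
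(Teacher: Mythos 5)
Your proof is correct and matches the paper's argument exactly: you use the commutation $V_k U_{st} = U_{st} V_k$ (and its adjoint consequences) to show $\clw_A$ is reducing, then use unitarity of $U_{st}$ restricted to $\clw_A$ to get surjectivity. The only cosmetic difference is that the paper compresses the equality step into the remark that $U_{ij}|_{\clw_A}$ is a unitary, whereas you spell out $\clw_A = U_{st}U_{st}^*\clw_A \subseteq U_{st}\clw_A$.
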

\begin{proof}
For the first part, we note that $V_k U_{ij} = U_{ij} V_k$ and hence $V_k U_{ij}^* = U_{ij}^* V_k$ for all $i\neq j$ and $k$. Then for each $\eta \in \clw_A$ and $k \in A$, we have
\[
V_k^* U_{ij} \eta = U_{ij} V_k^* \eta = 0,
\]
and similarly, $V_k^* U_{ij}^* \eta =0$. The latter assertion is trivial, as $U_{ij}|_{\clw_A}$ is a unitary.
\end{proof}

Now we are ready to prove the orthogonal decomposition theorem. We will use the following convention consistently: For each $A \subseteq I_n$, we set $\Z_+^{|A|} = \emptyset$ if $A = \emptyset$, and we denote by $\Z_+^{|A|}$ the set of $|A|$-tuples of elements of $\Z_+$ whenever $A \neq \emptyset$.

\begin{thm}\label{thm: main decomposition}
Let $V = (V_1,\ldots,V_n)$ be a $\clu_n$-twisted isometry on $\clh$. Then $V$ admits an orthogonal decomposition $\clh = \bigoplus\limits_{A \subseteq I_n} \clh_A$, where
\[
\clh_A = \bigoplus\limits_{k \in \Z_+^{|A|}} V_A^k \Big( \bigcap\limits_{l \in \Z_+^{n - |A|}} V_{I_n \setminus A}^l \clw_A\Big) \qquad (A \subseteq I_n).
\]
\end{thm}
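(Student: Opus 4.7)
The plan is to proceed by induction on $n$. The base case $n = 1$ is exactly Theorem \ref{thm: classic}, where the formula collapses to $\clh_{\{1\}} = \bigoplus_{k \geq 0} V_1^k \clw_{\{1\}}$ and $\clh_\emptyset = \bigcap_{k \geq 0} V_1^k \clh$.

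For the inductive step, I would single out $V_n$ and apply the classical Wold decomposition to obtain $\clh = \clk_s \oplus \clk_u$, where $\clk_s = \bigoplus_{k_n \geq 0} V_n^{k_n} \clw_{\{n\}}$ and $\clk_u = \bigcap_{k_n \geq 0} V_n^{k_n} \clh$. The first task is to verify that both $\clk_s$ and $\clk_u$ reduce every $V_j$ with $j \neq n$ and every $U_{ij}$. For this, Lemma \ref{lemma: commute} together with \eqref{equation: u_ntwisted} iteratively yields $V_j V_n^{k_n} = U_{jn}^{k_n} V_n^{k_n} V_j$; combined with Lemma \ref{lemma: reducing} and Lemma \ref{lemma: W reducing U} applied at $A = \{n\}$ (so $\clw_{\{n\}}$ reduces $V_j$ and the unitary $U_{jn}$ acts bijectively on $\clw_{\{n\}}$), this forces $V_j V_n^{k_n} \clw_{\{n\}} \subseteq V_n^{k_n} \clw_{\{n\}}$, and an analogous computation handles $V_j^*$. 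Consequently, on each of $\clk_s$ and $\clk_u$ the restricted tuple of $V_1, \ldots, V_{n-1}$ is a $\clu_{n-1}$-twisted isometry, and the inductive hypothesis produces $2^{n-1}$ reducing pieces on each summand, indexed by subsets $B \subseteq \{1, \ldots, n-1\}$. I would label them $\clh_{B \cup \{n\}}$ on $\clk_s$ (where $V_n$ is a shift) and $\clh_B$ on $\clk_u$ (where $V_n$ is unitary), obtaining an orthogonal decomposition $\clh = \bigoplus_{A \subseteq I_n} \clh_A$ with the desired shift/unitary pattern.

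The remaining work is to identify each $\clh_A$ with the explicit formula in the statement. Consider first $A$ with $n \in A$ and set $B = A \setminus \{n\}$. The inductive formula for $\clh_A$ on $\clk_s$ uses the wandering subspace $\clk_s \cap \clw_B$ of the $(n-1)$-tuple indexed by $B$. Since $\clw_B$ reduces $V_n$ (Lemma \ref{lemma: reducing}), a Wold decomposition of $V_n|_{\clw_B}$ together with Lemma \ref{lemma: wandering} gives $\clk_s \cap \clw_B = \bigoplus_{k_n \geq 0} V_n^{k_n} \clw_A$. Next, because $U_{jn}$ preserves $\clw_A$ (Lemma \ref{lemma: W reducing U}) and commutes with every $V_i$, the commutation $V_j V_n^{k_n} = U_{jn}^{k_n} V_n^{k_n} V_j$ promotes to the subspace identity $V_{A^c}^l V_n^{k_n} \clw_A = V_n^{k_n} V_{A^c}^l \clw_A$. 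This lets me pull $V_n^{k_n}$ through the intersection over $l$, then through the outer direct sum in $k'$ (orthogonality is inherited because $\clw_A \subseteq \clw_{\{n\}}$ and $V_B^{k'}$ is an isometry), and finally absorb $V_B^{k'} V_n^{k_n}$ into a single $V_A^k$. The case $n \notin A$ is analogous, working on $\clk_u$ and using $\clk_u \cap \clw_A = \bigcap_{k_n \geq 0} V_n^{k_n} \clw_A$, which folds directly into the intersection over $l$.

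The main obstacle throughout is this last identification: the operations $\bigcap$, $\bigoplus$, and $V_j$-action must be commuted through the twisted relations without accumulating error terms. The key observation that makes everything go through is that the twist unitaries $U_{ij}$ preserve every $\clw_A$ and commute with every $V_k$, so orbit subspaces of the form $V_A^k \cle$ with $\cle \subseteq \clw_A$ are unaffected by the twist factors. This is what allows the inductive pieces to be repackaged in the symmetric, twist-free form of the theorem, with no residual $U_{ij}$'s surviving.
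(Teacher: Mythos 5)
Your proposal is correct and rests on the same apparatus as the paper (the classical Wold theorem plus Lemmas \ref{lemma: commute}--\ref{lemma: W reducing U}), but the induction is run in the ``dual'' direction. The paper fixes the decomposition for $(V_1,\ldots,V_m)$ and then brings in $V_{m+1}$ by applying Wold to $V_{m+1}|_{\clw_A}$ inside each wandering subspace $\clw_A$, substituting the resulting refinement back into the inner part of the formula; you instead apply Wold to $V_n$ on the full space first, obtaining $\clh=\clk_s\oplus\clk_u$, verify that both summands reduce $V_1,\ldots,V_{n-1}$ and all $U_{ij}$ (so the restricted tuple is a $\clu_{n-1}$-twisted isometry), and then invoke the inductive hypothesis on each summand separately. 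The trade-off is mild: your route carries the small overhead of checking reducibility of $\clk_s$ and $\clk_u$, but makes the shift/unitary behaviour of $V_n$ on each piece immediate, whereas the paper's route avoids that check but must push $\bigoplus$ and $\bigcap$ through the twisted relations inside the nested formula. The mechanism you highlight at the end --- that each $U_{ij}$ preserves every $\clw_A$ and commutes with every $V_k$, so orbit spaces $V_A^k\cle$ with $\cle\subseteq\clw_A$ are unaffected by the twist factors --- is exactly what Lemma \ref{lemma: W reducing U} supplies in the paper's argument as well, so aside from the direction of the induction the two proofs are entirely parallel. The one place you should be explicit when writing this up is the identification step: once you establish $\clw_B\cap\clk_s=\bigoplus_{k_n}V_n^{k_n}\clw_A$ and $\clw_A\cap\clk_u=\bigcap_{k_n}V_n^{k_n}\clw_A$, the interchange of $\bigoplus_{k_n}$ with $\bigcap_l$ needs the orthogonality of the $V_n^{k_n}\clw_{\{n\}}$ (for uniqueness of Fourier components) and the injectivity of $V_{I_{n-1}\setminus A}^l|_{\clw_A}$ (to commute the isometry through an intersection); you gesture at both but they deserve a sentence each.
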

\begin{proof} We will prove this by induction. Suppose $(V_1,\ldots,V_n)$ is a $\clu_n$-twisted isometry on $\clh$. Set $V(m) = (V_1, \ldots, V_m)$, $2 \leq m \leq n$. We shall first prove our assertion when $m=2$. Let us denote $\clw_i = \clw_{\{i\}}$. Using Theorem \ref{thm: classic} (also \eqref{eqn: H_s and H_u}) applied to $V_1$ on $\clh$, we find
\[
\clh= (\oplus_{k_1\in \Z_+} V_1^{k_1} \clw_1) \oplus (\cap_{k_1\in \Z_+} V_1^{k_1} \clh).
\]
Note that, by Lemma \ref{lemma: reducing}, $\clw_1$ reduces $V_2$. Then, by applying Theorem \ref{thm: classic} to the isometry $V_2|_{\clw_1}$, we obtain the orthogonal decomposition
\[
\clw_1 = (\oplus_{k_2\in \Z_+} V_2^{k_2} (\clw_1 \ominus V_2 \clw_1)) \oplus (\cap_{k_2\in \Z_+}(V_2^{k_2} \clw_1)).
\]
Now by Lemma \ref{lemma: wandering} we have $\clw_1 \ominus V_2 \clw_1 = \clw_{\{1,2\}}$, and hence
\begin{equation}\label{eqn: V(2)}
\clh = \Big[\bigoplus\limits_{k_1,k_2\in \Z_+} V_1^{k_1}V_2^{k_2} \clw_{\{1,2\}}\Big] \bigoplus \Big[ \bigoplus\limits_{k_1\in \Z_+} V_1^{k_1} (\bigcap\limits_{k_2\in \Z_+} V_2^{k_2} \clw_1)\Big] \bigoplus \Big[ \bigcap\limits_{k_1\in \Z_+} V_1^{k_1} \clh\Big].
\end{equation}
Note that the restrictions of $V_1$ and $V_2$ to the first and the second summands are shifts, and shift and unitary, respectively, and the restriction of $V_1$ to the third summand is a unitary. Now, applying Theorem \ref{thm: classic} (and the representations in \eqref{eqn: H_s and H_u}) to $V_2$ on $\clh$, we obtain
\[
\clh = (\oplus_{k_2\in \Z_+} V_2^{k_2} \clw_2) \oplus (\cap_{k_2\in \Z_+} V_2^{k_2}\clh).
\]
By Lemma \ref{lemma: commute}, we have $V_1^{k_1} V_2^{k_2} = U_{12}^{k_1 + k_2} V_2^{k_2} V_1^{k_1}$ for all $k_1, k_2 > 0$. Lemma \ref{lemma: W reducing U} then implies that $V_1^{k_1} V_2^{k_2} \clw_2 = V_2^{k_2} V_1^{k_1} \clw_2$ for all $k_1, k_2 > 0$. Therefore
\[
V_1^{k_1}\clh = (\bigoplus\limits_{k_2\in \Z_+} V_1^{k_1} V_2^{k_2} \clw_2) \bigoplus (\bigcap\limits_{k_2\in \Z_+} V_1^{k_1} V_2^{k_2}\clh) = (\bigoplus\limits_{k_2\in \Z_+} V_2^{k_2} V_1^{k_1} \clw_2) \bigoplus (\bigcap\limits_{k_2\in \Z_+} V_1^{k_1} V_2^{k_2}\clh),
\]
for all $k_1 \in \Z_+$, from which it follows that
\[
\bigcap\limits_{k_1\in \Z_+} V_1^{k_1} \clh = (\bigoplus\limits_{k_2\in \Z_+} V_2^{k_2} (\bigcap_{k_1 \in \Z_+}V_1^{k_1} \clw_2)) \bigoplus (\bigcap\limits_{k_1,k_2\in \Z_+} V_1^{k_1} V_2^{k_2}\clh).
\]
We can then rewrite \eqref{eqn: V(2)} as $\clh = \oplus_{A \subseteq I_2} \clh_A$. This yields an orthogonal decomposition of the pair $V(2)$. Now suppose that $V(m)$, $m < n$, admits the orthogonal decomposition $\clh =\oplus_{A \subseteq I_m}\clh_A$, where
\[
\clh_A = \oplus_{k_a \in \Z_+^{|A|}} V_A^{k_a} (\cap_{k_c \in \Z_+^{m - |A|}}V_{I_m\setminus A}^{k_c} \clw_A).
\]
Recall that by convention, $\clw_{\emptyset} = \clh$ and $|\emptyset| = 0$. Since, by Lemma \ref{lemma: reducing}, $V_{m+1}$ reduces $\clw_A$, by applying Theorem \ref{thm: classic} to the isometry $V_{m+1}|_{\clw_A}$, and noting, by virtue of Lemma \ref{lemma: wandering}, that $\clw_A \cap \clw_{m+1} = \clw_{A \cup \{m+1\}}$, we obtain
\[
\clw_A= (\oplus_{j_{m+1}\in \Z_+} V_{m+1}^{j_{m+1}} \clw_{A \cup \{m+1\}}) \oplus (\cap_{j_{m+1}\in \Z_+} V_{m+1}^{j_{m+1}}\clw_A).
\]
This implies that
\[
\begin{split}
\clh_A & = \bigoplus\limits_{k_a \in \Z_+^{|A|}}  V_A^{k_a} \Big[\bigcap\limits_{k_c \in \Z_+^{m - |A|}} V_{I_m\setminus A}^{k_c} \Big(\bigoplus\limits_{j_{m+1}\in \Z_+} V_{m+1}^{j_{m+1}} \clw_{A \cup \{m+1\}} \bigoplus \bigcap\limits_{j_{m+1}\in \Z_+} V_{m+1}^{j_{m+1}} \clw_A \Big)\Big]
\\
& = \bigoplus\limits_{k_a \in \Z_+^{|A|}}  V_A^{k_a} \Big[\bigcap\limits_{k_c \in \Z_+^{m - |A|}} V_{I_m\setminus A}^{k_c} \Big(\bigoplus\limits_{j_{m+1}\in \Z_+} V_{m+1}^{j_{m+1}} \clw_{A \cup \{m+1\}}\Big) \bigoplus \Big(\bigcap\limits_{\substack{k_c \in \Z_+^{m - |A|} \\ j_{m+1}\in \Z_+}} V_{I_m\setminus A}^{k_c} V_{m+1}^{j_{m+1}} \clw_A \Big)\Big].
\end{split}
\]
By Lemma \ref{lemma: commute}, for each non-zero  $j_{m+1} \in \Z_+$ and $k_c \in \Z_+^{m-|A|}$, there exists a monomial $P_{j_{m+1}, k_c} \in \mathbb{C}[z_1, \ldots, z_{\binom{n}{2}}]$ such that
\[
V_{m+1}^{j_{m+1}}V_{I_m \setminus A}^{k_c} = P_{j_{m+1}, k_c}(U)V_{I_m \setminus A}^{k_c}V_{m+1}^{j_{m+1}}.
\]
Evidently, $P_{j_{m+1}, k_c}(U)$ is a monomial in $\{U_{ij}\}_{i<j}$. By Lemma \ref{lemma: W reducing U},
\[
V_{m+1}^{j_{m+1}}V_{I_m \setminus A}^{k_c}\clw_{A \cup \{m+1\}}= V_{I_m \setminus A}^{k_c}V_{m+1}^{j_{m+1}}\clw_{A \cup \{m+1\}},
\]
for all $j_{m+1} \in \Z_+$ and $k_c \in \Z_+^{m-|A|}$, and hence
\[
V_{I_m\setminus A}^{k_c} \Big(\bigoplus\limits_{j_{m+1}\in \Z_+} V_{m+1}^{j_{m+1}} \clw_{A \cup \{m+1\}}\Big) = \bigoplus_{j_{m+1}\in \Z_+} V_{m+1}^{j_{m+1}} \Big(V_{I_m\setminus A}^{k_c} \clw_{A \cup \{m+1\}} \Big),
\]
for all $k_c \in \Z_+^{m - |A|}$. Therefore
\[
\clh_A = \Big[\bigoplus_{\substack{k_a \in \Z_+^{|A|} \\ j_{m+1}\in \Z_+}} V_A^{k_a} V_{m+1}^{j_{m+1}} \Big(\bigcap\limits_{k_c \in \Z_+^{m - |A|}} V_{I_m\setminus A}^{k_c} \clw_{\tilde{A}} \Big)\Big]
\bigoplus \Big[\bigoplus\limits_{k_a \in \Z_+^{|A|}} V_A^{k_a} \Big(\bigcap_{\substack{k_c \in \Z_+^{m- |A|} \\ j_{m+1}\in \Z_+}} V_{I_m\setminus A}^{k_c} V_{m+1}^{j_{m+1}} \clw_A\Big)\Big],
\]
where $\tilde{A} = {A \cup \{m+1\}}$. This implies $\clh = \oplus_{A \subseteq I_{m+1}}\clh_A$, and hence $V(m+1)$ admits the orthogonal decomposition. This completes the proof.
\end{proof}

We will outline an alternate viewpoint of the above proof at the end of Section \ref{sec: model and wand data}.

In the remainder of this section, we discuss the uniqueness of the above orthogonal decomposition. Let $V = (V_1, \ldots, V_n)$ be a $\clu_n$-twisted isometry on $\clh$, $A \subseteq I_n$, and let a closed subspace $\cls \subseteq \clh$ reduce $V$. Suppose $V_i|_{\cls}$, $i \in A$, is a shift, and $V_j|_{\cls}$, $j \in A^c$, is a unitary. Set $\tilde V_i = V_i|_{\cls}$, $i \in I_n$. Now \eqref{eqn: U_{ij} = V_i^* V_j^* V_i V_j} implies that $\cls$ reduces $U_{ij}$, $i \neq j$. Then $\tilde{U}_{ji}=\tilde{U}_{ij}^*$ for all $1\leq i<j\leq n$, where $\tilde{U}_{ij} = U_{ij}|_{\cls}$, $i \neq j$. Evidently, $\tilde V :=(\tilde V_1, \ldots, \tilde V_n)$ on $\cls$ is a ${\clu}_n$-twisted isometry with respect to $\{\tilde{U}_{ij} \}_{i\neq j}$. Applying Theorem \ref{thm: main decomposition} to $\tilde V$, we obtain the orthogonal decomposition of $\tilde V$ as $\cls = \oplus_{B \subseteq I_n} \clh_B$. We claim that $\clh_B = \{0\}$ for all $B \neq A$, $B \subseteq I_n$. To see this, we first write $\tilde{\clw}_B = \cap_{i \in B} \ker \tilde V_i^*$, $B \subseteq I_n$. Let $i \in B \setminus A$. Then  $\tilde{V}_i = V_i|_{\cls}$ is a unitary, and hence $\tilde{\clw}_B = \{0\}$, which implies $\clh_B = \{0\}$. Now assume that $i \in A \setminus B$. Then $V_i|_{\clh_B}$ is a unitary, where on the other hand, $i \in A$ implies that $\tilde{V}_i$ is a shift, and hence $V_i|_{\clh_B}$ is a shift. This contradiction again shows that $\clh_B = \{0\}$. Thus
\[
\cls = \bigoplus\limits_{k \in \Z_+^{|A|}} \tilde V_A^k \Big( \bigcap\limits_{l \in \Z_+^{n - |A|}} \tilde V_{I_n \setminus A}^l \tilde \clw_A\Big).
\]
Again, by convention, we define $\tilde \clw_{\emptyset} = \cls$, $\clw_{\emptyset} = \clh$, and $|\emptyset| = 0$. Now, on the other hand, we have $\tilde \clw_A \subseteq \clw_A$. This simply follows from the fact that $\cls$ reduces the tuple $V$, and $\ker (V_i|_{\cls})^* = \ker V_i^*|_{\cls} \subseteq \ker V_i^*$ for all $i\in A$. Lemma \ref{lemma: reducing} then implies that $\tilde \clw_A$ reduces $V_i$, $i \notin A$, and hence $\cap_{l \in \Z_+^{n - |A|}} \tilde V_{I_n \setminus A}^l \tilde \clw_A \subseteq \cap_{l \in \Z_+^{n - |A|}} V_{I_n \setminus A}^l \clw_A$. Then
\[
\cls = \bigoplus\limits_{k \in \Z_+^{|A|}} \tilde V_A^k \Big( \bigcap\limits_{l \in \Z_+^{n - |A|}} \tilde V_{I_n \setminus A}^l \tilde \clw_A\Big) \subseteq \bigoplus\limits_{k \in \Z_+^{|A|}} V_A^k \Big( \bigcap\limits_{l \in \Z_+^{n - |A|}} V_{I_n \setminus A}^l \clw_A\Big) = \clh_A.
\]
This proves the nontrivial implication of the following proposition.

\begin{prop}\label{prop: unique}
Let $(V_1,\ldots, V_n)$ be a $\clu_n$-twisted isometry on $\clh$, $\cls$ be a closed $V$-reducing subspace of $\clh$, and let $A \subseteq I_n$. Suppose
\[
\clh_A := \bigoplus_{k \in \Z_+^{|A|}} V_A^k (\bigcap_{l \in \Z_+^{n - |A|}} V_{I_n \setminus A}^l \clw_A).
\]
Then the following are equivalent.

(i) $V_i|_{\cls}$ is a shift and $V_j|_{\cls}$ is a unitary for each $i \in A$ and $j \in A^c$, respectively.

(ii) $\cls \subseteq \clh_A$.

(iii) $P_{\cls} P_{\clh_A} = P_{\cls}$.
\end{prop}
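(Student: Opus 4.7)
Since the paragraph immediately preceding the proposition already establishes the nontrivial implication (i) $\Raro$ (ii), my plan is to complete the proof by verifying (ii) $\Raro$ (i) and the equivalence (ii) $\Leftrightarrow$ (iii); both are short and essentially formal once (i) $\Raro$ (ii) is in hand.

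For (ii) $\Raro$ (i), the key input is that $\clh_A$ itself is a $V$-reducing subspace of $\clh$ (this is part of the content of Theorem \ref{thm: main decomposition}) and that by the construction in that theorem, $V_i|_{\clh_A}$ is a shift for every $i \in A$ and $V_j|_{\clh_A}$ is a unitary for every $j \in A^c$. Now if $\cls \subseteq \clh_A$ and $\cls$ reduces $V$, then $\cls$ is also a reducing subspace of each $V_i|_{\clh_A}$. The remaining verification is standard: the restriction of a unitary to a reducing subspace is again unitary, which handles $j \in A^c$; and for $i \in A$, the defining condition $\|V_i^{\ast m} h\| \raro 0$ for every $h \in \clh_A$ passes automatically to every $h \in \cls$, so $V_i|_{\cls}$ is a shift. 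Note that $V_i|_{\cls}$ is an isometry for every $i$, simply because $\cls$ reduces $V_i$.

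For (ii) $\Leftrightarrow$ (iii), I would invoke the elementary projection identity: for two closed subspaces $\cls, \clk$ of a Hilbert space, $\cls \subseteq \clk$ holds if and only if $P_{\clk} P_{\cls} = P_{\cls}$, since the latter is equivalent to saying $P_{\clk}$ fixes the range of $P_{\cls}$. Taking adjoints of this identity (with $\clk = \clh_A$) yields $P_{\cls} P_{\clh_A} = P_{\cls}$, giving (ii) $\Raro$ (iii); and conversely, taking adjoints of (iii) recovers $P_{\clh_A} P_{\cls} = P_{\cls}$, hence $\cls \subseteq \clh_A$.

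The main obstacle in the whole proposition is really (i) $\Raro$ (ii), which has already been handled in the preceding discussion by applying Theorem \ref{thm: main decomposition} to the restricted tuple $\tilde V$ on $\cls$, using that the restricted wandering subspaces $\tilde \clw_B$ vanish for $B \neq A$, and then using the inclusion $\tilde \clw_A \subseteq \clw_A$ together with Lemma \ref{lemma: reducing} to embed the decomposition of $\cls$ inside $\clh_A$. Once this is accepted, the remaining steps in my plan involve only the functorial behavior of shifts and unitaries under restriction to reducing subspaces and a single adjoint manipulation of projections.
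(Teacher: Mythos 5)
Your proposal is correct and follows exactly the same route as the paper: the authors likewise state that (ii) $\Leftrightarrow$ (iii) is a general fact about projections, that (i) $\Rightarrow$ (ii) is the content of the discussion preceding the proposition, and that (ii) $\Rightarrow$ (i) is straightforward, which you have simply spelled out via the restriction-to-a-reducing-subspace argument for shifts and unitaries.
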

\begin{proof}
(ii) $\Leftrightarrow$ (iii) is a general fact. (i) $\Rightarrow$ (ii) follows from the preceding computation, while (ii) $\Rightarrow$ (i) is straightforward.
\end{proof}

One may compare the above statement with the second part of \cite[Theorem 3.4]{JP}. The above proposition also yields the uniqueness part of the orthogonal decomposition.

\begin{cor}\label{cor: unique}
Let $V = (V_1,\ldots,V_n)$ be a $\clu_n$-twisted isometry on $\clh$, and set \[
\clh_A := \bigoplus_{k \in \Z_+^{|A|}} V_A^k (\bigcap_{l \in \Z_+^{n - |A|}} V_{I_n \setminus A}^l \clw_A) \qquad (A \subseteq I_n).
\]
Let $\cls_A$, $A \subseteq I_n$, be a $V$-reducing closed subspace of $\clh$.  Let $\clh = \oplus_{A\subseteq I_n} \cls_A$, and suppose $V_i|_{\cls_A}$ is a shift and $V_j|_{\cls_A}$ is a unitary for each $i \in A$ and $j \in A^c$, respectively. Then $\cls_A = \clh_A$ for all $A \subseteq I_n$.
\end{cor}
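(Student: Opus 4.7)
\medskip

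\noindent\textbf{Proof proposal for Corollary \ref{cor: unique}.} The plan is to combine Theorem \ref{thm: main decomposition} with Proposition \ref{prop: unique} and then argue by a standard uniqueness-of-direct-sum principle. Concretely, for each $A \subseteq I_n$, the hypothesis says that $\cls_A$ is a $V$-reducing closed subspace on which $V_i$ is a shift for $i \in A$ and $V_j$ is a unitary for $j \in A^c$; so condition (i) of Proposition \ref{prop: unique} is satisfied. The implication (i) $\Rightarrow$ (ii) of that proposition then gives
\[
\cls_A \subseteq \clh_A \qquad (A \subseteq I_n).
\]

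Separately, Theorem \ref{thm: main decomposition} provides the orthogonal decomposition $\clh = \bigoplus_{A \subseteq I_n} \clh_A$. Combined with the given decomposition $\clh = \bigoplus_{A \subseteq I_n} \cls_A$, I then have two orthogonal direct sum decompositions of the same Hilbert space, together with the componentwise inclusions $\cls_A \subseteq \clh_A$. The next step is the elementary observation that, under these circumstances, each inclusion must be an equality. To spell it out, fix $B \subseteq I_n$ and take any $h \in \clh_B$. Writing $h = \sum_{A \subseteq I_n} h_A$ with $h_A \in \cls_A$ according to the first decomposition, the inclusion $\cls_A \subseteq \clh_A$ gives $h_A \in \clh_A$, so $h = \sum_A h_A$ is also a decomposition of $h$ along $\clh = \bigoplus_A \clh_A$. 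Orthogonality of the second decomposition forces $h_A = 0$ for $A \neq B$ and $h = h_B \in \cls_B$, so $\clh_B \subseteq \cls_B$, and together with the reverse inclusion this yields $\cls_B = \clh_B$.

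I do not anticipate a genuine obstacle here: the heavy lifting (constructing the $\clh_A$'s and verifying the geometric characterization of subspaces on which the restrictions have the prescribed shift/unitary type) has already been done in Theorem \ref{thm: main decomposition} and Proposition \ref{prop: unique}. The only point that warrants a line of care is the final uniqueness-of-direct-sum step, which I would present exactly as above rather than invoking it as ``obvious,'' since the proposition is stated as an equality of subspaces and the reader should see why containment is upgraded to equality by virtue of the two decompositions exhausting the same $\clh$.
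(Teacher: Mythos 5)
Your proof is correct and takes essentially the same route as the paper: both invoke (i) $\Rightarrow$ (ii) of Proposition \ref{prop: unique} to obtain $\cls_A \subseteq \clh_A$ and then conclude equality from the two direct-sum decompositions of $\clh$. The paper compresses the final step into ``immediately follows,'' whereas you spell out the uniqueness-of-direct-sum argument; that is the only difference.
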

\begin{proof}
This immediately follows from (i) $\Rightarrow$ (ii) of Proposition \ref{prop: unique}.
\end{proof}

\section{Analytic models and wandering data}\label{sec: model and wand data}

In this section, we describe models of $\clu_n$-twisted isometries. Actually, we prove that the examples in Section \ref{sec: example} are the basic ``building blocks'' of $\clu_n$-twisted isometries.

Recall that one of the most important components of the classical von Neumann-Wold decomposition theorem is the separation of the shift part (if any) from a given isometry. One of the main points, therefore, is to find a canonical method of separating shifts (if any) from tuples of isometries. An additional benefit also arises here since a shift operator can be represented as the multiplication operator by the coordinate function $z$ on some (canonical) vector-valued Hardy space over $\D$. This is also the basic theme in all other related orthogonal decompositions of (tuples of) operators. For instance, suppose $V \in \clb(\clh)$ is an isometry. By \eqref{eqn: H_s and H_u}, the orthogonal decomposition of the $1$-tuple $V = (V)$ is given by $\clh = \clh_{\{1\}} \oplus \clh_{\emptyset}$, where $\clh_{\{1\}} = \oplus_{j=0}^\infty V^j \clw$ and $\clh_{\emptyset} = \cap_{j=0}^\infty V^j \clh$, and $\clw = \ker V^*$. Define the \textit{canonical unitary} $\Pi_V: \clh_{\{1\}} \raro H^2_{\clw}(\D)$ by $\Pi_V (V^m \eta) = z^m \eta$, $m \in \Z_+$, $\eta \in \clw$. Then
\begin{equation}\label{eqn:Pi_V V = M_z Pi_V}
(\Pi_V \oplus I_{\clh_{\emptyset}})(V|_{\clh_{\{1\}}} \oplus V|_{\clh_{\emptyset}}) = (M_z \oplus V|_{\clh_{\emptyset}}) (\Pi_V \oplus I_{\clh_{\emptyset}}).
\end{equation}
It then follows that $V$ on $\clh$ is unitarily equivalent to $M_z \oplus V|_{\clh_{\emptyset}}$ on $H^2_{\clw}(\D) \oplus \clh_{\emptyset}$. In other words, the shift part of $V$ admits an analytic representation in terms of the multiplication operator $M_z$ on the $\clw$-valued Hardy space over $\D$. It is also worthwhile to recall that $\mbox{dim} \clw$ is the only unitary invariant of the shift $M_z$ on $H^2_{\clw}(\D)$.

With the above motivation in mind, we now return to $\clu_n$-twisted isometries. First of all, following \cite[Definition 3.7]{JP}, we introduce two core concepts:

\begin{definition}\label{def: A wandering sub}
For a $\clu_n$-twisted isometry $V = (V_1, \ldots, V_n)$ on a Hilbert space $\clh$, and for each $A \subseteq I_n$, the $A$-wandering subspace of $V$ is defined by
\[
\cld_A(V) = \bigcap\limits_{l \in \Z_+^{n - |A|}} V_{I_n \setminus A}^l \clw_A.
\]
Moreover, if $A^c =\{q_1, \ldots, q_{n-m}\}$, then the $(n-m+1)$-tuple
\[
wd_V(A) =\left(1_{\cld_A(V)},V_{q_1}|_{\cld_A(V)},\ldots,V_{q_{n-m}}|_{\cld_A(V)}\right),
\]
on $\cld_A(V)$ is called the $A$-wandering data of $V$.
\end{definition}

We often denote $\cld_A(V)$ as $\cld_A$ if $V$ is clear from the context. Note that the following lemma ensures that the $A$-wandering data $wd_V(A)$ is a well-defined $(n-m+1)$-tuple on $\cld_A$.

\begin{lem}\label{lemma: D_A reduces V_j}
$\cld_A$ reduces $V_j$ and $U_{st}$, and $U_{st} \cld_A = \cld_A$ for all $j \in A^c$ and $s \neq t$.
\end{lem}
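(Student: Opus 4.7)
The plan is to reduce the claim to the previously established invariance properties of $\clw_A$---namely Lemma \ref{lemma: reducing} ($\clw_A$ reduces $V_j$ for $j \in A^c$) and Lemma \ref{lemma: W reducing U} ($U_{st} \clw_A = \clw_A$)---combined with the twisted commutation $V_j V_k = U_{jk} V_k V_j$ from Lemma \ref{lemma: commute} and the hypothesis that each $U_{st}$ commutes with every $V_k$. The strategy is to show that every generator ($V_j$ for $j \in A^c$, and $U_{st}$) either preserves each building block $V_{I_n \setminus A}^l \clw_A$ or sends it into another subspace of the same shape, and then intersect over $l \in \Z_+^{n-|A|}$ to transfer the invariance to $\cld_A$.

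The $U_{st}$-invariance is essentially immediate. Since $U_{st}$ commutes with every $V_k$ and $U_{st} \clw_A = \clw_A$, one has
\[
U_{st}\bigl(V_{I_n \setminus A}^l \clw_A\bigr) = V_{I_n \setminus A}^l \bigl(U_{st} \clw_A\bigr) = V_{I_n \setminus A}^l \clw_A,
\]
and the same identity holds for $U_{st}^*$. Intersecting over $l$ yields $U_{st} \cld_A = \cld_A$ (using that $U_{st}^{-1} = U_{st}^*$ also stabilises each building block, so $\cld_A \subseteq U_{st} \cld_A$). Thus $\cld_A$ reduces $U_{st}$ and is mapped onto itself.

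For $V_j$ with $j = q_p \in A^c$, I would handle the two inclusions separately. Moving a single $V_j$ or $V_j^*$ across the ordered product $V_{I_n \setminus A}^l$ using Lemma \ref{lemma: commute} produces identities of the form
\[
V_j V_{I_n \setminus A}^l = \alpha\, V_{I_n \setminus A}^l V_j \qquad \text{and} \qquad V_{I_n \setminus A}^{l + e_p} = \gamma\, V_j V_{I_n \setminus A}^l,
\]
where $\alpha, \gamma$ are (explicit) monomials in the $U_{st}$'s. For $V_j \cld_A \subseteq \cld_A$: apply the first identity to $\eta = V_{I_n \setminus A}^l w$ with $w \in \clw_A$ to get $V_j \eta = V_{I_n \setminus A}^l (\alpha V_j w)$; Lemmas \ref{lemma: reducing} and \ref{lemma: W reducing U} together give $\alpha V_j w \in \clw_A$, so $V_j \eta \in V_{I_n \setminus A}^l \clw_A$ for every $l$. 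For $V_j^* \cld_A \subseteq \cld_A$: for each $l$, exploit $\eta \in V_{I_n \setminus A}^{l+e_p} \clw_A$ to write $\eta = \gamma V_j V_{I_n \setminus A}^l w$, whence
\[
V_j^* \eta = \gamma\, V_j^* V_j V_{I_n \setminus A}^l w = V_{I_n \setminus A}^l (\gamma w) \in V_{I_n \setminus A}^l \clw_A,
\]
since $V_j^* V_j = I$, $\gamma$ commutes with the $V_k$'s, and $\gamma w \in \clw_A$ by Lemma \ref{lemma: W reducing U}. Intersecting over $l$ yields the required invariance.

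The only (minor) obstacle is the bookkeeping of the unitary monomial factors $\alpha, \gamma$ produced when commuting $V_j$ past $V_{I_n \setminus A}^l$; this is entirely routine because those factors commute with every $V_k$ and stabilise $\clw_A$, so they pass through the product and leave the desired containment intact.
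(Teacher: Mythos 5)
Correct, and essentially the same approach as the paper: you show each generator stabilises the building blocks $V_{I_n\setminus A}^{l}\clw_A$ by commuting it through $V_{I_n\setminus A}^{l}$ (Lemma \ref{lemma: commute}) and absorbing the resulting monomials in the $U_{st}$'s into $\clw_A$ via Lemmas \ref{lemma: reducing} and \ref{lemma: W reducing U}, then intersect over $l$. Your treatments of $V_j^*$ (extracting a copy of $V_j$ from $V_{I_n\setminus A}^{l+e_p}$ and cancelling via $V_j^*V_j=I$, rather than commuting $V_j^*$ directly using the twisted relation) and of $U_{st}$ (a direct argument from commutativity, rather than the paper's appeal to \eqref{eqn: U_{ij} = V_i^* V_j^* V_i V_j}) are minor variations, and your $U_{st}$ argument is arguably cleaner since it covers $s$ or $t$ in $A$ without further comment.
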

\begin{proof}
Suppose $A = \emptyset$. Then $\clw_A = \clh$, by convention, and hence $\cld_A = \clh_A$, by Theorem \ref{thm: main decomposition}, which reduces $V_j$ for all $j \in I_n$. If $A = I_n$, then $\cld_A = \clw_{I_n}$, and the statement is nothing but Lemma \ref{lemma: reducing} and Lemma \ref{lemma: W reducing U}. Suppose $A = \{p_1, \ldots, p_m\}$ for some $1 \leq m < n$, and suppose $j \in A^c$. Observe that
\begin{equation}\label{eqn: V_j V_i^m}
V_p V_q^i = U^i_{pq} V_q^i V_p,
\end{equation}
for all $p \neq q$ and $i \in \Z_+$. This essentially follows from Lemma \ref{lemma: commute} and the fact that $V_p, V_q \in \{U_{pq}\}'$. If $A^c=\{j,q_1,\ldots,q_{n-m-1}\}$ then $V_j V^l_{I_n \setminus A} \clw_A = V^l_{I_n \setminus A} V_j (U^{l_1}_{jq_1} \cdots U^{l_{n-m-1}}_{j q_{n-m-1}}) \clw_A$ for all $l \in \Z_+^{n-m-1}$. By Lemma \ref{lemma: W reducing U} and then by Lemma \ref{lemma: reducing}, it follows that
\[
V_j V^l_{I_n \setminus A} \clw_A = V^l_{I_n \setminus A} V_j \clw_A \subseteq V^l_{I_n \setminus A} \clw_A,
\]
and hence $V_j \cld_A \subseteq \cld_A$. Similarly, we have $V_j^* \cld_A \subseteq \cld_A$, and hence $\cld_A$ reduces $V_j$. The remaining part simply follows from the first part and \eqref{eqn: U_{ij} = V_i^* V_j^* V_i V_j}.
\end{proof}

Let $V = (V_1,\ldots,V_n)$ be a $\clu_n$-twisted isometry on a Hilbert space $\clh$. Theorem \ref{thm: main decomposition} then implies that $\clh = \oplus_{A \subseteq I_n} \clh_A$, where $\clh_A=\oplus_{k\in \Z_+^{|A|}}V_A^k \cld_A$, and $V_i|_{\clh_A}$ is a shift and $V_j|_{\clh_A}$ is a unitary for each $i \in A$ and $j \in A^c$, respectively, and $A \subseteq I_n$. In view of the discussion preceding Definition \ref{def: A wandering sub}, it is natural to investigate the possibility of carrying over the analytic construction of the shift part of $V|_{\clh_A}$, $A \subseteq I_n$. Of course, the restriction of $V$ to $\clh_{\emptyset} = \cap_{k \in \Z_+^n} V^k \clh$ is a unitary tuple. We now examine the case where $A \neq \emptyset$.

Let $A = \{p_1, \ldots, p_m\} \subseteq I_n$ for some $m \geq 1$, and suppose $\clh_A \neq \{0\}$ (or, equivalently, $\cld_A \neq \{0\}$). In view of the orthogonal decomposition $\clh_A = \oplus_{k\in \Z_+^m}V_A^k \cld_A$ and \eqref{eqn:Pi_V V = M_z Pi_V}, we have the canonical unitary $\pi_A: \clh_A \raro H^2_{\cld_A}(\D^m)$, where (note that $m = |A| >0$)
\begin{equation}\label{eqn: Pi V_A}
\pi_A (V_A^k \eta) = z^k \eta \qquad (k \in \Z_+^m, \eta \in \cld_A).
\end{equation}
Suppose $k \in \Z_+^m$ and $\eta \in \cld_A$. We then get
\[
(\pi_A V_{p_1} \pi_A^*)(z^k \eta) = \pi_A(V_{p_1} V^k_A \eta) = \pi_A (V_{p_1}^{k_1+1} V_{p_2}^{k_2} \cdots V_{p_m}^{k_m} \eta) = z_1 (z^k \eta),
\]
that is, $\pi_A V_{p_1} = M_{z_1} \pi_A$. Next, assume that $1 < i \leq m$. By \eqref{eqn: V_j V_i^m}, we know that
\[
V_{p_i} V^k_A = V_{p_i} (V_{p_1}^{k_1} \cdots V_{p_m}^{k_m}) =  V_{p_1}^{k_1} \cdots V_{p_{i-1}}^{k_{i-1}} V_{p_i}^{k_i + 1} V_{p_{i+1}}^{k_{i+1}} \cdots V_{p_m}^{k_m} (U_{p_i p_1}^{k_1} \cdots U_{p_i p_{i-1}}^{k_{i-1}}),
\]
and $U_{p_i p_1}^{k_1} \cdots U_{p_i p_{i-1}}^{k_{i-1}} \eta \in \cld_A$, by Lemma \ref{lemma: D_A reduces V_j}. Hence
\[
(\pi_A V_{p_i} \pi_A^*)(z^k \eta) = \pi_A(V_{p_i}V^k_A \eta) = z_i (z^k (U_{p_i p_1}^{k_1} \cdots U_{p_i p_{i-1}}^{k_{i-1}} \eta)),
\]
which implies
\[
\pi_A V_{p_i} \pi_A^* = M_{z_i} (D_1 [U_{p_i p_1}] \cdots D_{i-1}[U_{p_i, p_{i-1}}]).
\]
Now suppose that $q_j \in A^c =\{q_1, \ldots, q_{n-m}\}$. Then \eqref{eqn: V_j V_i^m} and \eqref{eqn: Pi V_A} imply
\[
(\pi_A V_{q_j} \pi_A^*)(z^k \eta) = \pi_A V_{q_j} V^k_A \eta = \pi_A V^k_A (U^{k_1}_{q_j p_1} \cdots U^{k_m}_{q_j p_m} V_{q_j}|_{\cld_A} \eta) = z^k (U^{k_1}_{q_j p_1} \cdots U^{k_m}_{q_j p_m} V_{q_j}|_{\cld_A} \eta),
\]
as, by Lemma \ref{lemma: D_A reduces V_j}, $U^{k_1}_{q_j p_1} \cdots U^{k_m}_{q_j p_m} V_j|_{\cld_A} \eta = U^{k_1}_{q_j p_1} \cdots U^{k_m}_{q_j p_m} V_j \eta \in \cld_A$. Therefore
\[
\pi_A V_{q_j} \pi_A^* = (D_1[U_{q_j p_1}] \cdots D_m[U_{q_j p_m}]) (I_{H^2(\D^m)} \otimes V_{q_j}|_{\cld_A}) \qquad (j \in A^c).
\]
Finally, we consider the $n$-tuple $M_A = (M_{A,1}, \ldots, M_{A,n})$ on $H^2_{\cld_A}(\D^m)$ formed by the $m$ operators $\{\pi_A V_{p_i} \pi_A^*\}_{i=1}^m$ and $(n-m)$ operators $\{\pi_A V_{q_j} \pi_A^*\}_{j=1}^{n-m}$, where
\begin{equation}\label{eqn: tilde V_t}
M_{A,t}=
\begin{cases}
M_{z_1} & \mbox{if } t =p_1
\\
M_{z_i} (D_1 [U_{p_i p_1}] \cdots D_{i-1}[U_{p_i, p_{i-1}}]) & \mbox{if } t=p_i \mbox{ and }1<i\leq m
\\
(D_1[U_{q_j p_1}] \cdots D_m[U_{q_j p_m}]) (I_{H^2(\D^m)} \otimes V_{q_j}|_{\cld_A}) & \mbox{if } t = q_j \mbox{ and }1\leq j\leq n-m,
\end{cases}
\end{equation}
and $t \in \{1, \ldots, n\}$. Now the representation of the $A$-wandering data of $M_A$, denoted by $wd_{M_A}(A)$ (see Definition \ref{def: A wandering sub}), is essentially routine: Since $\ker M_{A,p_i}^* = \pi_A (\ker V_{p_i}^*)$ for all $i=1, \ldots, m$, it follows that $\cap_{p_i \in A} \ker M_{A,p_i}^* = \pi_A (\clw_A)$. For each $l \in \Z_+^{n-|A|}$, we have
\[
M_{I_n\setminus A}^l \Big(\bigcap_{p_i \in A} \ker M_{A,p_i}^*\Big) = (\pi_A {V}_{I_n \setminus A}^l \pi_A^*) (\pi_A \clw_A) = \pi_A {V}_{I_n \setminus A}^l \clw_A.
\]
This implies that $\cld_A(M_A) = \pi_A (\cld_A)$, and thus, by the definition $\pi_A$ (see \eqref{eqn: Pi V_A}), we get $\cld_A(M_A) = \cld_A$. Note that we are identifying $\cld_A$ with the set of all $\cld_A$-valued constant functions in $H^2_{\cld_A}(\D^m)$. Moreover, for each $q_j \in A^c$ and $f \in \cld_A$, since $\pi^*_A f = f$ and $V_{q_j} f \in \cld_A$, it follows that
\[
M_{A,q_j} f = \pi_A V_{q_j} \pi^*_A f = \pi_A V_{q_j} f = V_{q_j} f,
\]
and hence $M_{A, q_j}|_{\cld_A} = V_{q_j}|_{\cld_A}$. We summarize this observation as a proposition.

\begin{prop}\label{prop: model of V on H_A}
Let $(V_1, \dots, V_n)$ be a $\clu_n$-twisted isometry on $\clh$, and let $A \subseteq I_n$. If $\cld_A \neq \{0\}$, then the tuple $V|_{\clh_A}$ is unitarily equivalent to $M_A = (M_{A,1}, \ldots, M_{A,n})$ on $H^2_{\cld_A}(\D^{|A|})$, where $M_{A,i}$'s are defined as in \eqref{eqn: tilde V_t}. Moreover, if $A^c = \{q_1, \ldots, q_{n-m}\}$, then
\[
wd_{M_A}(A) = (I_{\cld_A}, V_{q_1}|_{\cld_A}, \dots, V_{q_{n-m}}|_{\cld_A}),
\]
and all other wandering data are zero tuples.
\end{prop}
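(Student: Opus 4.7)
The plan is to verify that the map $\pi_A$ defined in \eqref{eqn: Pi V_A} gives the required unitary equivalence, to read off the $A$-wandering data of $M_A$ directly from the construction, and then to show that $\cld_B(M_A) = \{0\}$ for every $B \subseteq I_n$ with $B \neq A$. The first two items are essentially already executed in the paragraph preceding the statement: since $\clh_A = \bigoplus_{k \in \Z_+^m} V_A^k \cld_A$ by Theorem \ref{thm: main decomposition} with $m = |A|$, $\pi_A$ extends to a unitary onto $H^2_{\cld_A}(\D^m)$, and the intertwining identities $\pi_A V_t = M_{A,t}\, \pi_A$ for $t \in I_n$ follow from the direct calculations given, which repeatedly use Lemma \ref{lemma: commute}, Lemma \ref{lemma: D_A reduces V_j}, and \eqref{eqn: V_j V_i^m}. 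The identifications $\cld_A(M_A) = \cld_A$ (via constant $\cld_A$-valued functions) and $M_{A, q_j}|_{\cld_A} = V_{q_j}|_{\cld_A}$ for $q_j \in A^c$ then yield the stated formula for $wd_{M_A}(A)$.

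For the last assertion, I would fix $B \subseteq I_n$ with $B \neq A$ and argue by cases. If $B \not\subseteq A$, pick $q_j \in B \cap A^c$. Since $V_{q_j}|_{\clh_A}$ is unitary by the orthogonal decomposition of Theorem \ref{thm: main decomposition} and $\cld_A$ reduces $V_{q_j}$ by Lemma \ref{lemma: D_A reduces V_j}, the restriction $V_{q_j}|_{\cld_A}$ is unitary. Hence $M_{A, q_j}$ is a product of unitaries, $\ker M_{A, q_j}^* = \{0\}$, and therefore $\cld_B(M_A) \subseteq \clw_B(M_A) \subseteq \ker M_{A, q_j}^* = \{0\}$. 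If instead $B \subsetneq A$, pick $p_i \in A \setminus B \subseteq I_n \setminus B$. By \eqref{eqn: tilde V_t} together with Lemma \ref{lemma:key index}(2), $M_{z_i}$ commutes with every diagonal factor appearing in $M_{A, p_i}$, so $M_{A, p_i}^k = M_{z_i}^k D^k$ for some unitary $D$ on $H^2_{\cld_A}(\D^m)$. Since $M_{z_i}$ is a shift on $H^2_{\cld_A}(\D^m)$ (as $M_{z_i}^{*k} \to 0$ strongly on the monomial basis), we conclude
\[
\cld_B(M_A) \subseteq \bigcap_{k \in \Z_+} M_{A, p_i}^k \clw_B(M_A) \subseteq \bigcap_{k \in \Z_+} M_{z_i}^k H^2_{\cld_A}(\D^m) = \{0\}.
\]

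The main obstacle is the dichotomy used in the last step: one must recognize explicitly that the shift directions of $M_A$ are indexed by $A$ and the unitary directions by $A^c$, and use this fact in both directions, namely unitarity of $M_{A, q_j}$ to kill $\clw_B(M_A)$ when $B \cap A^c \neq \emptyset$, and strong stability of $M_{z_i}^*$ to kill the intersection when $B \subsetneq A$. Everything preceding that step is lengthy but purely mechanical bookkeeping built on the orthogonal decomposition of $V$ on $\clh_A$.
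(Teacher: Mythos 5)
Your proposal is correct, and the first half (the intertwining $\pi_A V_t = M_{A,t}\pi_A$ and the identification of $wd_{M_A}(A)$) reproduces exactly what the paper carries out in the paragraph before the statement. Where you genuinely add something is the last assertion: the paper states that all wandering data $wd_{M_A}(B)$ for $B \neq A$ are zero tuples, but its exposition stops after computing $wd_{M_A}(A)$ and does not spell out why the others vanish. Your case-split argument for $\cld_B(M_A) = \{0\}$ is correct: when $B \cap A^c \neq \emptyset$, picking $q_j \in B \cap A^c$ makes $M_{A,q_j}$ a unitary (here you implicitly use that $V_{q_j}|_{\cld_A}$ is unitary, which holds because $\cld_A$ is a reducing subspace of the unitary $V_{q_j}|_{\clh_A}$), so $\clw_B(M_A) \subseteq \ker M_{A,q_j}^* = \{0\}$; and when $B \subsetneq A$, picking $p_i \in A \setminus B$ and using Lemma~\ref{lemma:key index}(2) gives $M_{A,p_i}^k = M_{z_i}^k D^k$ with $D$ unitary commuting with $M_{z_i}$, so $\cld_B(M_A) \subseteq \bigcap_k M_{z_i}^k H^2_{\cld_A}(\D^{|A|}) = \{0\}$.

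For comparison, there is a shorter route that the paper has already set up: since $M_A$ is unitarily equivalent to $V|_{\clh_A}$, and $M_{A,i}$ is a shift for $i \in A$ while $M_{A,j}$ is a unitary for $j \in A^c$, Proposition~\ref{prop: unique} applied to $M_A$ with $\cls = H^2_{\cld_A}(\D^{|A|})$ gives $H^2_{\cld_A}(\D^{|A|}) \subseteq \clh_A(M_A)$, hence $\clh_A(M_A)$ is everything and every other summand $\clh_B(M_A)$, and in particular $\cld_B(M_A)$, vanishes. Your direct argument buys independence from the uniqueness proposition and makes visible exactly which structural feature kills each $\cld_B(M_A)$; the uniqueness-based argument is shorter and reuses machinery already proved. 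Either is acceptable.
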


We call $M_A$ the \textit{model operator corresponding to $A \subseteq I_n$} (or simply the \textit{model operator}). Note that the model operator $M_A$ on $H^2_{\cld_A}(\D^{|A|})$ is a $\clu_n$-twisted isometry, where $\clu_n = \{\pi_A U_{ij}\pi_A^*\}_{i \neq j}$.

In particular, if $A = \{1, \ldots, m\}$ for some $m \in \{1,\ldots, n\}$, then $V|_{\clh_A}$ on $\clh_A$ is unitarily equivalent to $M_A = (M_1, \ldots, M_n)$ on $H^2_{\cld_A}(\D^m)$, where $M_1 = M_{z_1}$ and
\[
M_i =  M_{z_i} (D_1 [U_{i 1}] D_2 [U_{i 2}] \cdots D_{i-1}[U_{i i-1}]),
\]
for all $i= 2,\ldots, m$, and
\[
M_j = (D_1[U_{j 1}] \cdots D_m[U_{j m}]) (I_{H^2(\D^m)} \otimes V_j|_{\cld_A}).
\]
for all $j= m+1, \ldots, n$, and $wd_{M_A}(A) = (I_{\cld_A}, V_{m+1}|_{\cld_A}, \dots, V_{n}|_{\cld_A})$.

Now we turn to analytic models of $\clu_n$-twisted isometries. Let $V = (V_1,\ldots, V_n)$ be an $\clu_n$-twisted isometry, and suppose $\clh = \oplus_{A \subseteq I_n} \clh_A$. To obtain the model of $V$, we will apply the above proposition for each $A \subseteq I_n$ and patch all the pieces together. Recall that, by convention, $H^2_{\cld_{\emptyset}}(\D^{|\emptyset|}) = \clh_{\emptyset}$, and $M_{\emptyset, t} = V_t|_{\clh_{\emptyset}}$ for all $t= 1, \ldots, n$. Proposition \ref{prop: model of V on H_A} now tells us that the $n$-tuples $V|_{\clh_A}$ and $M_A$ are unitarily equivalent via the canonical unitary $\pi_A : \clh_A \raro H^2_{\cld_A}(\D^{|A|})$ as defined in \eqref{eqn: Pi V_A}, where $\cld_A$ is the $A$-wandering subspace and $A$ is a non-empty subset of $I_n$.  Since $V_i=\oplus_{A \subseteq I_n}V_i|_{\clh_A}$ for all $i = 1, \ldots, n$, it follows that
\[
V = (V_1,\ldots,V_n)=\bigoplus\limits_{A \subseteq I_n}(V_1|_{\clh_A}, \ldots, V_n|_{\clh_A}).
\]
We set $M_{V,i} = \oplus_{A \subseteq I_n} M_{A,i} \in \clb(\oplus_{A \subseteq I_n} H^2_{\cld_A}(\D^{|A|}))$, $i=1, \ldots, n$, and define
\[
M_V = (M_{V,1}, \ldots, M_{V,n}).
\]
Then the unitary $\Pi_V:= \oplus_{A \subseteq I_n} \pi_A$ satisfies $\Pi_V V_i= M_{V,i} \Pi_V$ for all $i=1, \ldots, n$. Thus, we have proved:

\begin{thm}\label{thm: model}
Let $(V_1,\ldots,V_n)$ be a $\mathcal{U}_n$-twisted isometry on $\clh$. Then $(V_1,\ldots,V_n)$ is unitarily equivalent to $(M_{V,1}, \ldots, M_{V,n})$ on $\oplus_{A \subseteq I_n} H^2_{\cld_A}(\D^{|A|})$.
\end{thm}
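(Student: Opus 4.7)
The plan is to assemble the result from the two substantial ingredients already developed earlier in the paper: the orthogonal decomposition of Theorem \ref{thm: main decomposition}, and the analytic model for each direct summand provided by Proposition \ref{prop: model of V on H_A}. Because those two results do essentially all the heavy lifting, what remains is a bookkeeping step consisting in defining the right global unitary and checking that the intertwining relations combine coherently across all $2^n$ summands.

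First, I would invoke Theorem \ref{thm: main decomposition} to write $\clh = \bigoplus_{A \subseteq I_n} \clh_A$, where each $\clh_A$ is $V$-reducing and $V|_{\clh_A}$ has shift coordinates indexed by $A$ and unitary coordinates indexed by $A^c$. In particular, for every $i \in \{1,\ldots,n\}$ we have the block diagonal form
\[
V_i = \bigoplus_{A \subseteq I_n} V_i|_{\clh_A}.
\]
Next, for each non-empty $A \subseteq I_n$, I would apply Proposition \ref{prop: model of V on H_A} to obtain the canonical unitary $\pi_A : \clh_A \to H^2_{\cld_A}(\D^{|A|})$ defined by $\pi_A(V_A^k \eta) = z^k \eta$ for $k \in \Z_+^{|A|}$ and $\eta \in \cld_A$, and satisfying $\pi_A V_i|_{\clh_A} = M_{A,i} \pi_A$ for every $i = 1,\ldots,n$, where $M_{A,i}$ is the model operator given in \eqref{eqn: tilde V_t}. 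For the empty-set summand, I would simply invoke the conventions already recorded in the paragraph preceding the theorem, namely $H^2_{\cld_{\emptyset}}(\D^{|\emptyset|}) = \clh_{\emptyset}$, $\pi_{\emptyset} = I_{\clh_{\emptyset}}$, and $M_{\emptyset, i} = V_i|_{\clh_{\emptyset}}$, so that the intertwining relation on $\clh_{\emptyset}$ holds tautologically.

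Finally, I would set $\Pi_V := \bigoplus_{A \subseteq I_n} \pi_A$, which is a unitary from $\clh$ onto $\bigoplus_{A \subseteq I_n} H^2_{\cld_A}(\D^{|A|})$ because each $\pi_A$ is unitary and the decompositions on both sides are orthogonal. Writing $M_{V,i} = \bigoplus_{A \subseteq I_n} M_{A,i}$, I would then verify in one line that $\Pi_V V_i = M_{V,i} \Pi_V$ for every $i$, by simply taking the direct sum of the summand-wise intertwining relations $\pi_A V_i|_{\clh_A} = M_{A,i} \pi_A$. This completes the unitary equivalence of $(V_1,\ldots,V_n)$ and $(M_{V,1},\ldots,M_{V,n})$.

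In truth there is no serious obstacle here: the genuine content lies in Theorem \ref{thm: main decomposition} (the orthogonal decomposition) and Proposition \ref{prop: model of V on H_A} (the analytic model on each block), both of which are already established. The only mildly delicate point is making sure that the empty-set case is handled by convention rather than by applying the proposition, and that the direct sum decomposition on the model side really carries the block-diagonal operators $M_{V,i}$ coherently; both are routine.
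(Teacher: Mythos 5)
Your argument is correct and is essentially identical to the paper's own proof: decompose $\clh = \bigoplus_{A \subseteq I_n} \clh_A$ via Theorem \ref{thm: main decomposition}, apply Proposition \ref{prop: model of V on H_A} on each non-empty block with the canonical unitary $\pi_A$, handle $A = \emptyset$ by convention, and take the direct sum $\Pi_V = \bigoplus_A \pi_A$ to patch the summand-wise intertwinings together. No gaps.
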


In the case of doubly noncommuting isometries (that is, in the case $U_{ij} = z_{ij}I_{\clh}$), this was observed in \cite[Theorem 4.6]{JP}.

Note that the proof of the above theorem is a simple consequence of Proposition \ref{prop: model of V on H_A}, and the proof of Proposition \ref{prop: model of V on H_A} uses Theorem \ref{thm: main decomposition}. In the following, we present a second and somewhat more direct proof of Proposition \ref{prop: model of V on H_A}. The techniques of this proof may be of independent interest.

\noindent We begin with the case of a single isometry. Suppose $V \in \clb(\clh)$ is a shift, and suppose $\clw_V = \ker V^*$. Then we have the canonical unitary $\Pi_V: \clh \raro H^2(\D) \otimes \clw_V$ such that $\Pi_V V = (M_z \otimes I_{\clw_V}) \Pi_V$ (see the discussion preceding Definition \ref{def: A wandering sub}). Observe that
\begin{equation}\label{eqn: Pi^* V^j}
\Pi_V^* (z^j \otimes \eta) = V^j \eta \qquad (j \in \Z_+, \eta \in \clw_V).
\end{equation}
Now, let $1 \leq m \leq n$, and let $A = \{p_1, \ldots, p_m\} \subseteq I_n$. Let $V = (V_1, \ldots, V_n)$ be a $\clu_n$-twisted isometry. Suppose $V_i$ is a shift, and $V_j$ is a unitary for each $i \in A$ and $j \in A^c$, respectively. Set $\Pi_1:= \Pi_{V_{p_1}}$. By Lemma \ref{lemma: reducing}, we know that $\clw_{\{p_1\}}$ reduces $V_{p_2}$. Therefore, $V_{p_2}|_{\clw_{\{p_1\}}}$ is a shift in $\clb(\clw_{\{p_1\}})$. Lemma \ref{lemma: wandering} tells us that $\ker (V_{p_2}|_{\clw_{\{p_1\}}})^*= \clw_{\{p_1,p_2\}}$. Then the canonical unitary
\[
\Pi_2 := \Pi_{V_{p_2}|_{\clw_{\{p_1\}}}} : \clw_{\{p_1\}} \raro H^2(\D) \otimes \clw_{\{p_1, p_2\}},
\]
corresponding to $V_{p_2}|_{\clw_{\{p_1\}}}$ yields a unitary $I_{H^2(\D)} \otimes \Pi_2 : H^2_{\clw_{\{p_1\}}}(\D) \raro H^2_{\clw_{\{p_1, p_2\}}}(\D^2)$. Here we have once again used the identification $H^2_{\clw_{\{p_1, p_2\}}}(\D^2) = H^2(\D^2) \otimes {\clw_{\{p_1, p_2\}}}$. Continuing exactly in the same way, we find
\[
\begin{split}
0 \raro \clh \xrightarrow{\Pi_1} H^2_{\clw_{\{p_1\}}}(\D) \xrightarrow{I_{H^2(\D)} \otimes \Pi_2} H^2_{\clw_{\{p_1, p_2\}}}(\D^2) & \xrightarrow{I_{H^2(\D^2)} \otimes \Pi_3}
\\
& \qquad \cdots \xrightarrow{I_{H^2(\D^{m-1})} \otimes \Pi_m} H^2_{\clw_A}(\D^m) \raro 0.
\end{split}
\]
This gives us a unitary $\Pi : \clh \raro H^2_{\clw_A}(\D^m)$ defined by
\[
\Pi := (I_{H^2(\D^{m-1})} \otimes \Pi_m) (I_{H^2(\D^{m-2})} \otimes \Pi_{m-1}) \cdots (I_{H^2(\D)}  \otimes \Pi_2) \Pi_1.
\]
Now, for each $i=2, \ldots, m$, use \eqref{eqn: Pi^* V^j} to see that
\[
(I_{H^2(\D^{i-1})} \otimes \Pi_i)(z_1^{k_1} \cdots z_{i-1}^{k_{i-1}} \otimes V_{p_i}^{k_i}|_{\clw_{\{p_1,\ldots, p_{i-1}\}}} \eta) = z_1^{k_1} \cdots z_{i-1}^{k_{i-1}} z_i^{k_i} \eta,
\]
for all $k = (k_1, \ldots, k_{i-1}) \in \Z_+^{i-1}$, and $\eta \in \clw_{\{p_1,\ldots, p_{i-1}\}}$. Applying the above repeatedly, we find that $\Pi (V^k_A \eta) = z^k \eta$, $k \in \Z_+^m$, $\eta \in \clw_A$, which was obtained in \eqref{eqn: Pi V_A}. The remainder of the proof of Proposition \ref{prop: model of V on H_A} now proceeds similarly.

We should mention that the above techniques can be readily adapted to prove (at the expense of a more cumbersome computation) Theorem \ref{thm: model} in its full generality.

\section{Invariants}\label{sec: invariants}

The purpose of this section is to prove that wandering data are complete unitary invariants for $\clu_n$-twisted isometries. We start with a simple observation.

\begin{lem}\label{lemma: pi V  tilde V}
Let $V = (V_1, \ldots, V_n)$ on $\clh$ and $\tv = (\tv_1, \ldots, \tv_n)$ on $\tilde{\clh}$ be $\clu_n$-twisted isometries with respect to $\{U_{ij}\}_{i < j} \subseteq \clb(\clh)$ and $\{\tilde{U}_{ij}\}_{i < j} \subseteq \clb(\tilde{\clh})$, respectively, and let $\Pi: \clh \raro \tilde{\clh}$ be a unitary operator. If $\Pi V_i = \tv_i \Pi$ for all $i=1,\ldots, n$, then $\Pi U_{st} = \tilde{U}_{st} \Pi$ for all $s \neq t$.
\end{lem}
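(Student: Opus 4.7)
The plan is to exploit the canonical formula for the twisting unitaries recorded in Remark~\ref{remark: Uij = V*V*VV}, namely
\[
U_{ij} = V_i^{\ast} V_j^{\ast} V_i V_j \qquad (i \neq j),
\]
and its analogue $\tilde{U}_{ij} = \tilde{V}_i^{\ast} \tilde{V}_j^{\ast} \tilde{V}_i \tilde{V}_j$ on $\tilde{\clh}$. Once the $U_{ij}$'s are expressed purely in terms of the $V_i$'s, the conclusion becomes a direct intertwining computation.

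First, I would use that $\Pi$ is unitary, so from $\Pi V_i = \tilde{V}_i \Pi$ I immediately obtain $\Pi V_i^{\ast} = \tilde{V}_i^{\ast} \Pi$ for all $i = 1,\ldots,n$. Iterating this four times gives, for each pair $i < j$,
\[
\Pi U_{ij} = \Pi V_i^{\ast} V_j^{\ast} V_i V_j = \tilde{V}_i^{\ast} \tilde{V}_j^{\ast} \tilde{V}_i \tilde{V}_j \Pi = \tilde{U}_{ij} \Pi.
\]
To cover the remaining indices $s > t$, I would invoke the defining convention $U_{ts} = U_{st}^{\ast}$ (and $\tilde{U}_{ts} = \tilde{U}_{st}^{\ast}$); taking adjoints in $\Pi U_{st} = \tilde{U}_{st} \Pi$ and using unitarity of $\Pi$ once more yields $\Pi U_{ts} = \tilde{U}_{ts} \Pi$.

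There is no genuine obstacle here; the only conceptual point is that the twisting unitaries are \emph{determined} by the isometric tuple via the formula above, a fact already established in Remark~\ref{remark: Uij = V*V*VV} using the commutation relations $V_k \in \{U_{st} : s \neq t\}'$ together with $V_i^{\ast} V_j = U_{ij}^{\ast} V_j V_i^{\ast}$. Consequently any intertwiner of the isometric tuples automatically intertwines the auxiliary unitaries, so the lemma is a short consequence of Remark~\ref{remark: Uij = V*V*VV}.
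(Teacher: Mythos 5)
Your proof is correct and coincides with the paper's argument: both reduce the lemma to the identity $U_{st}=V_s^{\ast}V_t^{\ast}V_sV_t$ from Remark~\ref{remark: Uij = V*V*VV} and then intertwine term by term. The only cosmetic difference is that you handle $s>t$ by taking adjoints, whereas the identity already holds verbatim for all $s\neq t$ (since the paper's convention $U_{ts}=U_{st}^{\ast}$ makes the relation $V_s^{\ast}V_t=U_{st}^{\ast}V_tV_s^{\ast}$ valid for every ordered pair), so that extra step is unnecessary.
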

\begin{proof}
The proof follows at once from the fact that $U_{st} = V_s^* V_t^* V_s V_t$ for all $s \neq t$ (see \eqref{eqn: U_{ij} = V_i^* V_j^* V_i V_j}).
\end{proof}

In particular, if $V \cong \tv$, then the $\binom{n}{2}$-tuples $\{U_{ij}\}_{i < j}$ and $\{\tilde{U}_{ij}\}_{i < j}$ are unitarily equivalent under the same unitary map.

Let $V = (V_1, \ldots, V_n)$ on $\clh$ and $\tv = (\tv_1, \ldots, \tv_n)$ on $\tilde{\clh}$ be $\clu_n$-twisted isometries with respect to $\{U_{ij}\}_{i < j} \subseteq \clb(\clh)$ and $\{\tilde{U}_{ij}\}_{i < j} \subseteq \clb(\tilde{\clh})$, respectively. For $A \subseteq I_n$, we say that $wd_V(A)$ is \textit{twisted unitarily equivalent} to $wd_{\tv}(A)$ (which we will denote by $wd_V(A) \cong_{\clu} wd_{\tv}(A)$) if the tuples $wd_V(A) \cup \{U_{ij}|_{\cld_A(V)}\}_{i\neq j}$ and $wd_{\tv}(A)\cup \{\tilde U_{ij}|_{\cld_A(\tilde V)}\}_{i\neq j}$ are unitarily equivalent.

We are now all set to prove that $wd_V(A) \cup \{U_{ij}|_{\cld_A(V)}\}_{i\neq j}$ is a complete set of unitary invariants of $\clu_n$-twisted isometry $V$.

\begin{thm}\label{thm: unitary equivalent}
Let $V = (V_1, \ldots, V_n)$ on $\clh$ and $\tv = (\tv_1, \ldots, \tv_n)$ on $\tilde{\clh}$ be $\clu_n$-twisted isometries with respect to $\{U_{ij}\}_{i < j} \subseteq \clb(\clh)$ and $\{\tilde{U}_{ij}\}_{i < j} \subseteq \clb(\tilde{\clh})$, respectively. Then the following statements are equivalent:
\begin{enumerate}
\item $V \cong \tv$.
\item $wd_V(A) \cong_{\clu} wd_{\tv}(A)$ for all $A \subseteq I_n$.
\end{enumerate}
\end{thm}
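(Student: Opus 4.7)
The plan is to prove both implications by transporting the two tuples to their model form given by Theorem \ref{thm: model}, so the question reduces to comparing model operators on $\oplus_{A \subseteq I_n} H^2_{\cld_A}(\D^{|A|})$ that are, by construction, built entirely from the wandering data and the restrictions $U_{ij}|_{\cld_A}$.

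For the direction $(1) \Rightarrow (2)$, let $\Pi : \clh \raro \tilde{\clh}$ be a unitary with $\Pi V_i = \tilde V_i \Pi$ for all $i$. Then $\Pi V_i^* = \tilde V_i^* \Pi$, so $\Pi \ker V_i^* = \ker \tilde V_i^*$. Intersecting over $i \in A$ gives $\Pi \clw_A = \clw_A(\tilde V)$, and intersecting over $l \in \Z_+^{n-|A|}$ after applying the intertwining $\Pi V^l_{I_n \setminus A} = \tilde V^l_{I_n \setminus A} \Pi$ yields $\Pi \cld_A(V) = \cld_A(\tilde V)$. By Lemma \ref{lemma: pi V  tilde V} we also have $\Pi U_{st} = \tilde U_{st} \Pi$ for all $s \neq t$, and since $\cld_A(V)$ and $\cld_A(\tilde V)$ reduce both the tuple and the $U_{st}$'s (Lemma \ref{lemma: D_A reduces V_j}), restricting $\Pi$ to $\cld_A(V)$ yields the desired twisted unitary equivalence $wd_V(A) \cong_{\clu} wd_{\tv}(A)$ for each $A \subseteq I_n$.

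For the direction $(2) \Rightarrow (1)$, for each $A \subseteq I_n$ pick a unitary $\tau_A : \cld_A(V) \raro \cld_A(\tv)$ realising the twisted equivalence, so that $\tau_A V_{q_j}|_{\cld_A(V)} = \tv_{q_j}|_{\cld_A(\tv)} \tau_A$ for $q_j \in A^c$ and $\tau_A U_{ij}|_{\cld_A(V)} = \tilde{U}_{ij}|_{\cld_A(\tv)} \tau_A$ for $i \neq j$. Set
\[
\Phi_A := I_{H^2(\D^{|A|})} \otimes \tau_A : H^2_{\cld_A(V)}(\D^{|A|}) \raro H^2_{\cld_A(\tv)}(\D^{|A|}).
\]
Inspecting the three cases in the formula \eqref{eqn: tilde V_t} that defines the model operator $M_{A,t}$, one checks that $\Phi_A$ intertwines $M_{A,t}$ with its $\tv$-counterpart $\tilde M_{A,t}$: the $M_{z_i}$ factors are untouched by $\Phi_A$, the diagonal operators $D_k[U_{ij}|_{\cld_A(V)}]$ are intertwined with $D_k[\tilde U_{ij}|_{\cld_A(\tv)}]$ because $\tau_A$ conjugates the relevant unitary symbols, and the tensor factor $I \otimes V_{q_j}|_{\cld_A(V)}$ is intertwined with $I \otimes \tv_{q_j}|_{\cld_A(\tv)}$ by the defining property of $\tau_A$. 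Consequently $\Phi := \oplus_{A \subseteq I_n} \Phi_A$ is a unitary intertwining $M_V$ with $M_{\tv}$, and composing with the model unitaries $\Pi_V$ and $\Pi_{\tv}$ from Theorem \ref{thm: model} produces a unitary $\clh \raro \tilde \clh$ intertwining $V$ with $\tv$.

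The only delicate point is really the $(1) \Rightarrow (2)$ step, where one must verify that a global intertwiner automatically respects the canonical decomposition into wandering subspaces; this is where Lemma \ref{lemma: pi V  tilde V} together with the geometric formula for $\cld_A$ does the work. The $(2) \Rightarrow (1)$ direction is then essentially a consequence of the analytic model, because the model depends on $V$ only through the data $\{V_{q_j}|_{\cld_A},\, U_{ij}|_{\cld_A}\}$ that the hypothesis assumes are preserved.
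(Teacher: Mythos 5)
Your proof is correct. The $(1)\Rightarrow(2)$ direction coincides with the paper's argument: transport $\clw_A$ and then $\cld_A$ along the intertwiner, use Lemma \ref{lemma: pi V  tilde V} to carry the twist unitaries, and restrict.

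For $(2)\Rightarrow(1)$ you take a genuinely different (though ultimately equivalent) route. The paper defines $\pi_A:\clh_A\raro\tilde\clh_A$ directly by $\pi_A(V_A^k\eta)=\tv_A^k\tau_A\eta$ and then checks, summand by summand, that it intertwines $V_{p_i}|_{\clh_A}$ with $\tv_{p_i}|_{\tilde\clh_A}$ by pushing the commutation relations $V_p V_q^i = U_{pq}^i V_q^i V_p$ and the relation $\tau_A U_{ij}|_{\cld_A(V)} = \tilde U_{ij}|_{\cld_A(\tv)}\tau_A$ by hand. You instead pass through Theorem \ref{thm: model} and observe that the explicit model operator $M_{A,t}$ in \eqref{eqn: tilde V_t} is built entirely out of $M_{z_i}$, the diagonal operators $D_k[U_{ij}|_{\cld_A}]$, and $I\otimes V_{q_j}|_{\cld_A}$, so the unitary $\Phi_A = I_{H^2(\D^{|A|})}\otimes\tau_A$ is visibly an intertwiner once one checks the elementary identity $(I\otimes\tau_A)D_k[U] = D_k[\tilde U](I\otimes\tau_A)$ whenever $\tau_A U = \tilde U\tau_A$. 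Unwinding your $\Pi_{\tv}^*\Phi\Pi_V$ on $\clh_A$ recovers exactly the paper's $\pi_A$, so the constructed unitary is the same; what differs is where the verification is done. Your route is arguably cleaner because the model formula makes the functional dependence of $M_A$ on the wandering data explicit, and the only computation left is the one-line conjugation of diagonal symbols; the cost is that it presupposes Theorem \ref{thm: model}, whereas the paper's argument is self-contained modulo the commutation relations. One small point worth recording when writing this up: for $A=\emptyset$ the model space is $\clh_\emptyset$ itself with $\Phi_\emptyset=\tau_\emptyset$, and for $A$ with $\cld_A(V)=\{0\}$ the hypothesis forces $\cld_A(\tv)=\{0\}$ as well, so the degenerate summands cause no trouble.
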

\begin{proof}
$(1) \Rightarrow (2)$ Let $\pi:\clh \raro \tilde{\clh}$ be a unitary, and let $\pi V_i = \tv_i \pi$ for all $i=1,\ldots, n$. Fix $A \subseteq I_n$. Then (see the discussion preceding Proposition \ref{prop: model of V on H_A}) $\pi \clw_A = \tilde{\clw}_A$, where $\tilde{\clw}_A = \bigcap_{i\in A} \ker \tv_i^*$. Recall that $\cld_A(V)$ and $\cld_A(\tv)$ denotes the $A$-wandering subspaces of $V$ and $\tv$, respectively. Then
\[
\pi \cld_A(V) = \bigcap_{l \in \Z_+^{n-|A|}} \pi V_{I_n \setminus A}^l \pi^* (\pi \clw_A) = \bigcap_{l \in \Z_+^{n-|A|}} \tv_{I_n \setminus A}^l \tilde{\clw}_A = \cld_A(\tv),
\]
and hence, $\pi|_{\cld_A(V)} : \cld_A(V) \raro {\cld}_A(\tv)$ is a unitary. Now fix $j \in A^c$, $l \in \Z_+^{n - |A|}$, and $f \in \clw_A$. Since, by Lemma \ref{lemma: D_A reduces V_j}, $\cld_A(V)$ reduces $V_j$, it follows that
\[
(\pi|_{\cld_A(V)} V_j) V_{I_n \setminus A}^l f = (\tv_j \pi) V_{I_n \setminus A}^l f = (\tv_j \pi|_{\cld_A(V)}) V_{I_n \setminus A}^l f,
\]
as $V_{I_n \setminus A}^l f \in \cld_A(V)$. Therefore, $\pi|_{\cld_A(V)} V_j|_{\cld_A(V)} = \tilde V_j|_{\cld_A(\tilde{V})} \pi|_{\cld_A(V)}$ for all $j \in A^c$. Finally, $\pi|_{\cld_A(V)} U_{ij}|_{\cld_A(V)} = \tilde{U}_{ij}|_{\cld_A(\tilde{V})} \pi|_{\cld_A(V)}$ follows from the fact that $\pi U_{ij} = \tilde{U}_{ij} \pi$, $i\neq j$. This proves that $(1) \Rightarrow (2)$.

\noindent To prove $(2) \Rightarrow (1)$, we first consider orthogonal decompositions $\clh = \oplus_{A \subseteq I_n} \clh_A$ and $\tilde \clh = \oplus_{A \subseteq I_n} \tilde \clh_A$. Suppose $A = \{p_1,\ldots, p_m\} \subseteq I_n$. By assumption, there exists a unitary $\tau_A: \cld_A(V) \raro \cld_A(\tv)$ such that $\tau_A V_j|_{\cld_A(V)} = \tv_j|_{\cld_A(\tv)} \tau_A$ and $\tau_A U_{st}|_{\cld_A(V)} = \tilde{U}_{st}|_{\cld_A(\tv)} \tau_A$ for all $j \in A^c$ and $s \neq t$. We also know that $\clh_A = \oplus_{k \in \Z_+^{|A|}} V_A^k \cld_A(V)$ and $\tilde{\clh}_A = \oplus_{k \in \Z_+^{|A|}} \tv_A^k \cld_A(\tv)$ (see Theorem \ref{thm: main decomposition}). Then the map $\pi_A(V_A^k \eta) = \tv_A^k \tau_A \eta$, for all $k  \in \Z_+^{|A|}$ and $\eta \in \cld_A(V)$, defines a unitary $\pi_A : \clh_A \raro \tilde{\clh}_A$. Let $k \in \Z_+^{|A|}$ and $\eta \in \cld_A(V)$. For each $p_i \in A$, we have $(\pi_A V_{p_i}|_{\clh_A}) (V_A^k \eta) = \pi_A V_{p_i} V_A^k \eta$, and hence
\[
(\pi_A V_{p_i}|_{\clh_A}) (V_A^k \eta) = \pi_A (V_A^{k+e_i} (U_{p_i p_1}^{k_1} \cdots U_{p_i p_{i-1}}^{k_{i-1}} \eta)) =  \tilde{V}_A^{k+e_i} (\tau_A U_{p_i p_1}^{k_1} \cdots U_{p_i p_{i-1}}^{k_{i-1}}\eta).
\]
Since $\tau_A U_{p_i p_1}^{k_1} \cdots U_{p_i p_{i-1}}^{k_{i-1}}|_{\cld_A(V)} = \tilde{U}_{p_i p_1}^{k_1} \cdots \tilde{U}_{p_i p_{i-1}}^{k_{i-1}}|_{\cld_A(\tilde{V})}\tau_A$, we obtain
\[
(\pi_A V_{p_i}|_{\clh_A}) (V_A^k \eta) = (\tilde{V}_{p_i}|_{\tilde{\clh}_A} \pi_A) (V_A^k \eta),
\]
and hence $\pi_A V_{p_i}|_{\clh_A} = \tilde{V}_{p_i}|_{\tilde{\clh_A}} \pi_A$ for all $p_i \in A$. The remaining equality $\pi_A V_i|_{\clh_A} = \tilde{V}_i|_{\tilde{\clh}_A} \pi_A$ for all $i \in A^c$ is similar. Now we consider the unitary $\pi := \oplus_{A \subseteq I_n} \pi_A: \oplus_{A \subseteq I_n} \clh_A = \clh \longrightarrow \oplus_{A \subseteq I_n} \tilde{\clh}_A = \tilde{\clh}$. Since $V_j = \oplus_{A \subseteq I_n} V_j|_{\clh_A}$ and $\tv_j = \oplus_{A \subseteq I_n} \tv_j|_{\tilde{\clh}_A}$, by the previous identity, we have $\pi V_j = \tv_j \pi$ for all $j \in I_n$. Finally, since $U_{ij} = V_i^*V_j^*V_iV_j$ and $\tilde{U}_{ij} = \tv_i^* \tv_j^* \tv_i \tv_j$, it follows that
\[
\pi U_{ij} = (\oplus_{A \subseteq I_n} \pi_A) (\oplus_{A \subseteq I_n} U_{ij}|_{\clh_A}) = (\oplus_{A \subseteq I_n} \tilde{U}_{ij}|_{\tilde{\clh}_A}) (\oplus_{A \subseteq I_n} \pi_A) = \tilde{U}_{ij} \pi,
\]
and completes the proof of the theorem.
\end{proof}

\section{Nuclear $C^*$-algebras}\label{sec: nuclear}

Our objective in this section is to show that the universal $C^*$-algebra generated by a $\clu_n$-twisted isometry, $n \geq 2$, is nuclear.

We begin by recalling the definition of a universal $C^*$-algebra (cf. \cite[page 885]{W1}). Let $\clg = \{g_i : i\in \Lambda\}$ be a set of generators and $\clr$ be a set of relations. A unital $C^*$-algebra $A$ is said to be a \textit{universal $C^*$-algebra} generated by the elements in $\clg$ and
satisfying the relation $\clr$ if it satisfies the following property: If $\tilde{A}$ is a unital $C^*$-algebra generated by $\tilde{\clg} = \{\tilde{g}_i: i \in \Lambda\}$ that satisfies the same relation set $\clr$, then there exists a unique $*$-epimorphism $\pi: A \raro \tilde{A}$ such that $\pi(g_i) = \tilde{g}_i$ for all $i\in \Lambda$.

Given $C^*$-algebras $A$ and $B$, we denote by $A\otimes B$ the algebraic tensor product of $A$ and $B$. A norm $\|\cdot\|_{\alpha}$ on $A \otimes B$ is said to be a $C^*$-norm if $\norm{xy}_{\alpha} \leq \norm{x}_{\alpha} \norm{y}_{\alpha}$ and $\norm{x^*x}_{\alpha}=\norm{x}_{\alpha}^2$ holds for all $x$ and $y$ in $A\otimes B$.

The minimal tensor norm $\|\cdot\|_{min}$ and the maximal tensor norm $\|\cdot\|_{max}$ are the extreme examples of $C^*$-norms: If $\|\cdot\|_{\alpha}$ is a $C^*$-norm on the algebraic tensor product $A\otimes B$, then
\[
\|x\|_{min}\leq \|x\|_{\alpha} \leq \|x\|_{max} \qquad (x\in A \otimes B).
\]
Finally, we recall that a ${C}^*$-algebra $A$ is called \textit{nuclear} \cite[page 184]{Blackadar} if for each ${C}^*$-algebra $B$ there is a unique ${C}^*$-norm on $A\otimes B$. It is well known that a ${C}^*$-algebra $A$ is nuclear if and only if $\|x\|_{min}=\|x\|_{max}$ for all $x\in A\otimes B$ and all $C^*$-algebras $B$.

We now return to $\clu_n$-twisted isometries. We denote by $\mathcal{T}_n$, the universal $\mathrm{C}^*$-algebra generated by the set
\[
\clg = \{X_t, X_{ij}: i, j, t \in I_n, i< j\},
\]
with the set of relations $\clr = \clr_1 \cup \clr_2$, where
\[
\clr_1=\{r^\ast r = I, X_tX_{ij} = X_{ij}X_t : r = X_t, X_{ij}, X_{ij}^\ast, i < j\},
\]
and
\[
\clr_2 = \{X_{ji}=X_{ij}^\ast, X_i^\ast X_j=X_{ij}^\ast X_jX_i^\ast : i < j\}.
\]
Evidently, $\mathcal{T}_n$ is generated by $n$ isometries $(V_1, \ldots, V_n)$ and $\binom{n}{2}$ unitaries $\{U_{ij}\}_{i<j}$ satisfying \eqref{equation: u_ntwisted} with $U_{ji}=U_{ij}^*$ for all $i<j$.

We wish to point out that Proskurin \cite{P} and Weber \cite{W} proved that the universal ${C}^*$-algebra generated by a doubly non-commuting pair of isometries (that is, in the case of $U_{ij} = z_{ij} I_{\clh}$, $i \neq j$) is nuclear (also see \cite{Kuzmin 1, Kuzmin 2, PV} for other relevant results). The main tool used in \cite{P,W} is a result of Rosenberg \cite[Theorem 3]{R}, which determines amenability of $C^*$-algebras generated by amenable $C^*$-subalgebras (recall that all nuclear $C^*$-algebras are amenable \cite{UH}):

\begin{thm}[Rosenberg]\label{thm: Rosenberg}
Let $A$ be a unital ${C}^*$-algebra generated by a nuclear  ${C}^*$-subalgebra $B$ containing the unit of $A$ and an isometry $s\in A$ satisfying the condition $sBs^*\subseteq B$. Then $A$ is nuclear.
\end{thm}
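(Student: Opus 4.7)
My plan is to exhibit $A$ as a corner of a crossed-product $C^{*}$-algebra built from $B$ and an endomorphism, and then invoke standard permanence of nuclearity under the constructions involved. The required permanence facts are: (i) nuclearity passes to countable inductive limits, (ii) the crossed product of a nuclear $C^{*}$-algebra by an amenable group is nuclear, and (iii) corners (hereditary $C^{*}$-subalgebras) of nuclear $C^{*}$-algebras are nuclear.

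The first step is to notice that because $s^{*}s=I$, the map $\alpha\colon B\to B$ defined by $\alpha(b)=sbs^{*}$ is a $*$-endomorphism of $B$: indeed
\[
\alpha(bc)=sbcs^{*}=sb(s^{*}s)cs^{*}=\alpha(b)\alpha(c),\qquad \alpha(b^{*})=\alpha(b)^{*},
\]
and $\alpha$ lands in $B$ by hypothesis (with $\alpha(1)=ss^{*}$ a projection in $B$). I would then form the inductive limit
\[
B_{\infty}:=\varinjlim\bigl(B\xrightarrow{\alpha}B\xrightarrow{\alpha}B\xrightarrow{\alpha}\cdots\bigr),
\]
which is nuclear by (i) since each stage is nuclear, and observe that the shift on the inductive system upgrades $\alpha$ to a genuine automorphism $\bar\alpha\in\operatorname{Aut}(B_{\infty})$. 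Form the crossed product $D:=B_{\infty}\rtimes_{\bar\alpha}\Z$, which is nuclear by (ii) because $\Z$ is amenable.

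The second step is to realize $A$ as a corner of $D$. Let $\iota_{0}\colon B\hookrightarrow B_{\infty}$ be the inclusion at level $0$ of the inductive system, set $q:=\iota_{0}(1_{B})\in B_{\infty}\subset D$, and let $u\in M(D)$ be the canonical unitary implementing $\bar\alpha$. Then $qDq$ is naturally generated by $\iota_{0}(B)$ together with the element $T:=q\,u\,q$, which is an isometry in $qDq$ satisfying $TbT^{*}=\iota_{0}(\alpha(b))$ for $b\in B$. I would then define $\Phi\colon A\to qDq$ by $\Phi(b)=\iota_{0}(b)$ and $\Phi(s)=T$, check compatibility with the defining relations, and verify that $\Phi$ is a surjective $*$-homomorphism (surjectivity because every monomial in $u,u^{*}$ and $B_{\infty}$, compressed by $q$, lies in the $C^{*}$-algebra generated by $\iota_{0}(B)$ and $T$). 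A final appeal to (iii) then concludes: $qDq$ is nuclear, and $A\cong qDq$ is nuclear.

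The main obstacle will be injectivity of $\Phi$. Unlike the setting of a universal $C^{*}$-algebra, here $A$ is just some unital $C^{*}$-algebra satisfying the relations, so we cannot simply invoke universality. The standard way around this is a gauge-invariant uniqueness argument: the crossed product $D$ carries the dual circle action $\hat{\bar\alpha}$ fixing $B_{\infty}$ and rotating $u$, which restricts to a strongly continuous $\T$-action on $qDq$; pulling this back through $\Phi$ gives a candidate $\T$-action on $A$ fixing $B$ and rotating $s$. One must argue that such an action exists on $A$ (either by averaging with respect to a dilation of $s$ to a unitary in some enlargement, or by direct construction on monomials $s^{j}bs^{*k}$), then use faithfulness of the canonical conditional expectation of $qDq$ onto its fixed-point subalgebra, which identifies with an inductive limit of $B$'s under $\alpha$, and is therefore manifestly isomorphic to its image under $\Phi^{-1}$. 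This is the delicate step and the place where Rosenberg's original argument does its real work; all the other steps are formal consequences of the standard permanence properties.
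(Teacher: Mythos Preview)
The paper does not prove this theorem; it is quoted from \cite[Theorem~3]{R} and used as a black box in the proof of Theorem~\ref{nuclearity}. So there is no in-paper argument to compare against, and your attempt must be judged on its own.

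Your outline follows the Paschke--Stacey dilation strategy and correctly establishes that the corner $qDq$ of $D=B_\infty\rtimes_{\bar\alpha}\Z$ is nuclear. The gap is in the final step: you aim for an \emph{isomorphism} $\Phi:A\to qDq$, but this fails in general, and your proposed construction of $\Phi$ is ill-posed. First, you cannot define a $*$-homomorphism out of $A$ by specifying images of generators, since $A$ is not given by a universal property; it is merely some unital $C^*$-algebra containing $B$ and $s$. Second, even the conclusion $A\cong qDq$ is false: take $B=\C$ and $s$ a unitary with $s^n=1$; then $\alpha=\mathrm{id}$, $B_\infty=\C$, $D=C^*(\Z)\cong C(\T)$, $q=1$, so $qDq=C(\T)$, whereas $A=C^*(s)\cong\C^n$. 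The gauge-invariant uniqueness route you sketch cannot rescue this, precisely because a circle action on $A$ fixing $B$ and rotating $s$ need not exist (in the same example, $(zs)^n=z^n\neq1$ for generic $z\in\T$).

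The fix is to reverse the arrow and weaken the target. The universal unital $C^*$-algebra generated by a copy of $B$ and an isometry $S$ subject to $SbS^*=\alpha(b)$ for all $b\in B$ admits, by its very universal property, a surjective $*$-homomorphism onto $A$ (send $b\mapsto b$, $S\mapsto s$). Your dilation argument identifies that universal algebra with $qDq$, hence it is nuclear. Since quotients of nuclear $C^*$-algebras are nuclear, $A$ is nuclear. In short: you only need $A$ to be a \emph{quotient} of the nuclear model, not isomorphic to it; this removes the injectivity obstacle entirely.
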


We are now ready to prove that $\mathcal{T}_n$ is nuclear. Here also, the above criterion will play a key role.

\begin{thm}\label{nuclearity}
$\mathcal{T}_n$ is nuclear for $n\geq 2$.	
\end{thm}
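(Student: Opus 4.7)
The plan is to argue by induction on $n$ using Theorem \ref{thm: Rosenberg}. The base case $n = 1$ (in which $\mathcal{T}_1$ is the classical Toeplitz algebra generated by a single isometry) is due to Coburn and is nuclear. For the inductive step I would assume $\mathcal{T}_{n-1}$ is nuclear and exhibit a nuclear unital $C^*$-subalgebra $\clb \subseteq \mathcal{T}_n$ containing the unit with $V_n \clb V_n^* \subseteq \clb$ and $\mathcal{T}_n = C^*(\clb, V_n)$; applying Theorem \ref{thm: Rosenberg} with $s = V_n$ then finishes the proof.

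The first step is to extract the key algebraic data. The relations in Definition \ref{def: twisted isom} give $V_n^* V_i = U_{in} V_i V_n^*$ for $i \neq n$, and a short induction on $k$ yields
\[
V_n^{*k} V_i = U_{in}^k V_i V_n^{*k} \qquad \text{and} \qquad V_n^k V_i = U_{in}^{*k} V_i V_n^k,
\]
for every $k \geq 0$ and every $i < n$. Multiplying these two identities and cancelling $U_{in}^k U_{in}^{*k} = I$ shows that each projection $V_n^k V_n^{*k}$ commutes with $V_i$ for $i < n$; it also commutes with every $U_{st}$ by the standing commutation relations. Let $\clp_n := C^*(\{V_n^k V_n^{*k} : k \geq 0\})$. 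Since $\{V_n^k V_n^{*k}\}_{k \geq 0}$ is a decreasing chain of mutually commuting projections, $\clp_n$ is commutative and hence nuclear.

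Next I define the base algebra
\[
\clb := C^*\bigl(V_1, \ldots, V_{n-1},\, \{U_{ij}\}_{1 \leq i < j \leq n},\, \clp_n\bigr).
\]
By the universal property of $\mathcal{T}_{n-1}$ applied to $(V_1,\dots,V_{n-1})$ with $\{U_{ij}\}_{i < j \leq n-1}$, there is a $*$-homomorphism $\mathcal{T}_{n-1} \raro \mathcal{T}_n$ whose image is $C^*(V_1,\dots,V_{n-1}, U_{ij} : i < j \leq n-1)$. The remaining unitaries $U_{1n}, \ldots, U_{n-1,n}$ mutually commute and commute with that image and with $\clp_n$, and $\clp_n$ commutes with the image of $\mathcal{T}_{n-1}$ by the preceding paragraph. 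Hence $\clb$ is a quotient of $\mathcal{T}_{n-1} \otimes C^*(U_{in} : i < n) \otimes \clp_n$. By the inductive hypothesis $\mathcal{T}_{n-1}$ is nuclear, the remaining two factors are commutative (hence nuclear), and since tensor products and quotients of nuclear $C^*$-algebras are nuclear, $\clb$ is nuclear.

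Finally, the invariance $V_n \clb V_n^* \subseteq \clb$ is checked on generators via the three computations
\[
V_n V_i V_n^* = U_{in}^* V_i (V_n V_n^*),\quad V_n U_{ij} V_n^* = U_{ij} (V_n V_n^*),\quad V_n (V_n^k V_n^{*k}) V_n^* = V_n^{k+1} V_n^{*(k+1)},
\]
each of which lies in $\clb$. Since $\mathcal{T}_n = C^*(\clb, V_n)$, Theorem \ref{thm: Rosenberg} yields nuclearity of $\mathcal{T}_n$. The main obstacle, and the reason for the enlargement of the base, is precisely that the naive candidate $C^*(V_1, \ldots, V_{n-1}, \{U_{ij}\})$ fails Rosenberg's invariance condition: the expression $V_n V_i V_n^*$ produces the projection $V_n V_n^*$, which does not a priori lie in that subalgebra. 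Adjoining the full chain $\{V_n^k V_n^{*k}\}_{k\geq 0}$ restores invariance, and the centrality computation (the cancellation of the twist factors $U_{in}^k$ and $U_{in}^{*k}$) is exactly what keeps the enlarged base nuclear.
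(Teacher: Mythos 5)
Your proof is correct, but it takes a genuinely different route from the paper's. The paper constructs a single \emph{commutative} base algebra
\[
\clb = C^*\big(\{P_i(k)=V_i^kV_i^{*k}: 1\leq i\leq n,\ k\in\Z_+\}\cup\{U_{jk}\}_{j<k}\big),
\]
using range projections of \emph{all} the isometries; after noting that all these generators mutually commute, it invokes Theorem \ref{thm: Rosenberg} a total of $n$ times, adjoining $V_1,\dots,V_n$ one at a time. Your argument is instead a genuine induction on $n$: the base algebra you use is \emph{not} commutative, but is exhibited as a quotient of $\mathcal{T}_{n-1}\otimes C^*(U_{in}:i<n)\otimes\clp_n$, where $\clp_n$ is the commutative algebra generated by the range projections of $V_n$ alone; nuclearity of the base then follows from the inductive hypothesis together with the standard permanence of nuclearity under maximal tensor products and quotients, and Rosenberg's theorem is applied exactly once, with $s=V_n$. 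Both proofs are sound. Yours makes the inductive skeleton explicit and isolates clearly the tensor decomposition of the base algebra, at the cost of invoking the universal property of $\mathcal{T}_{n-1}$ and the tensor/quotient permanence of nuclearity. The paper's version is shorter because the base is commutative outright, but it leaves the Rosenberg invariance conditions for each of the $n$ adjunctions implicit; the phenomenon you point out at the end (that the naive base $C^*(V_1,\dots,V_{n-1},\{U_{ij}\})$ is not invariant under $a\mapsto V_n a V_n^*$ because $V_nV_n^*$ is produced, forcing the range projections into the base) is exactly what is being handled, step by step, in the paper's repeated applications as well. One minor stylistic note: the paper only defines $\mathcal{T}_n$ for $n\geq 2$, so if you want a literal induction you should either take $n=2$ as your base case (which works by the same argument, with $\mathcal{T}_1$ the Toeplitz algebra), or observe as you did that the Coburn algebra is the natural $n=1$ instance.
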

\begin{proof}
Note that $\clt_n$ is generated by isometries $\{V_i\}_{i=1}^n$ and unitaries $\{U_{ij}\}_{i \neq j}$, where $V=(V_1, \ldots, V_n)$ is a $\clu_n$-twisted isometry with respect to $\{U_{ij}\}_{i < j}$. For each $m \in \Z_+$, we set
\[
P_i(m) = V_i^{m}V_i^{*m} \qquad (i=1,\ldots, n).
\]
Clearly, $\{P_1(m_1), \ldots, P_n(m_n)\}$ are orthogonal projections for all $m_i \in \Z_+$, $i=1, \ldots, n$. Consider the $C^*$-algebra
\[
\clb= C^*(\{P_1(k_1),P_2(k_2),\cdots ,P_n(k_n), U_{jk}: k_1, \ldots,k_n \in \Z_+ ~\mbox{and } 1\leq j < k \leq n\}).
\]
Now $\{U_{jk}\}_{j \neq k}$ is a commuting family (see Remark \ref{remark: Uij = V*V*VV}). Since $V_i \in \{U_{jk}\}'_{j \neq k}$, it follows that $P_i(k_i) \in \{U_{jk}\}'_{j \neq k}$  for all $i=1, \ldots, n$, and $k_i \in \Z_+$. Also  $$P_i(k_i)P_j(k_j)=P_j(k_j)P_i(k_i), ~~\mbox{for all } k_i, k_j \in \Z_+ ~\mbox{and } 1 \leq i,j \leq n.$$ implies that the elements $P_1(k_1)P_2(k_2)\cdots P_n(k_n)$ commutes among themselves for all $k_i\in \Z_+$ with $1\leq i\leq n$. Therefore $\clb$ is a commutative $C^*$-algebra. Since commutative $C^*$-algebras are nuclear, it follows, in particular, that $\clb$ is nuclear. Finally, since
\[
\clt_n=C^*(\{\clb, V_i : 1\leq i \leq n\}),
\]
a repeated application ($n$-times) of Theorem \ref{thm: Rosenberg} to $\clb$ and $V_i$'s proves that $\clt_n$ is nuclear.
\end{proof}

The following observation, in particular, also says that the $C^*$-algebra generated by a $\clu_2$-twisted isometry is not simple (see \cite[Section II.5.4]{Blackadar} on simple $C^*$-algebras).

\begin{remark}\label{lemma: ideal of compact}
Let $\clk$ be the universal $C^*$-algebra of compact operators on a separable infinite dimensional Hilbert space generated by self-adjoint matrix units $\{E_{ij}\}_{i,j \in \Z_+}$ (that is, $E_{ij}E_{kl}=\delta_{jk}E_{il}$ and $E_{ij}^*=E_{ji}$ for all $i,j,k,l \in \Z_+$). Let $U$ be a unitary and $(V_1,V_2)$ be a pair of isometries acting on a Hilbert space $\clh$. Suppose $V_1^*V_2 = U^* V_2V_1^*$, and denote by $\langle (1- V_1V_1^*)(1-V_2V_2^*)\rangle$ the ideal generated by $(1- V_1V_1^*)(1-V_2V_2^*)$ in $C^*(V_1,V_2)$. For $p, q, r, s \in \Z_+$, define
\[
E_{pq, sr}:= V_1^p V_2^q (1-V_1 V_1^*)(1-V_2 V_2^*)V_2^{*r} V_1^{*s}.
\]
It is easy to check that,
\[
E^*_{pq, sr}= E_{sr,pq} \quad \text{and} \quad E_{pq,sr} E_{ij,lk} = \delta_{si} \delta_{rj} E_{pq,lk}.
\]
for all $a,b,c,d, i, j, k, l \in \Z_+$, that is, $\{E_{pq,sr}\}_{p,q,r,s \in \Z_+}$ is a self-adjoint system of matrix units. Using the universal property of $\clk$ and the fact that $\clk$ is simple, we conclude that $\clk$ is isomorphic to the closed subalgebra of $\langle (1-V_1V_1^*)(1-V_2V_2^*)\rangle$ spanned by $\{E_{pq,sr}\}_{p,q,r,s \in \Z_+}$. Therefore the proper ideal $\langle (1- V_1V_1^*)(1-V_2 V_2^*)\rangle$ in $C^*(V_1,V_2)$ contains a subalgebra isomorphic to $\clk$.
\end{remark}

\section{Classifications}\label{sec: Classification}

In this section, we classify $\clu_n$-twisted isometries via representations of twisted noncommutative tori.
	
We begin by recalling the definitions of rotation algebras or noncommutative tori and the Heisenberg group $C^*$-algebras (see \cite{AP, Packer, PR} for more details). For $\theta \in \R$, the \textit{rotation algebra} is defined as the universal $C^*$-algebra
\[
\mathcal{A}_{\theta}:=C^*(\{U_1, U_2: U_1, U_2 \text{ are unitaries and } U_1 U_2=e^{2\pi i\theta}U_2 U_1 \}).
\]
The rotation algebra is also known as the \textit{noncommutative torus} as for $\theta=0$, $\mathcal{A}_0\cong C(\T^2)$, where $\T$ denotes the unit circle. When $\theta$ is irrational, $\mathcal{A}_{\theta}$ is called the irrational rotation algebra which is a simple $C^*$-algebra having the unique faithful trace $\tau_{\theta}:\mathcal{A}_{\theta}\to \C$ defined by
\[
\tau_{\theta}(U_1^l U_2^m)=
\begin{cases}
1 & \quad\text{if } l=m=0\\ 0 &\quad \text{otherwise },
\end{cases}
\]
for $l,m\in \Z$. Let $\mathcal{A}=C^*(H)$, the group $C^*$-algebra of the Heisenberg group
\[
H:= \left\lbrace
\begin{pmatrix}
1 & m & p\\
0 & 1 & n \\
0 & 0 & 1
\end{pmatrix}: m,n,p\in \Z
\right\rbrace.
\]
We can view $\mathcal{A}$ as the universal $C^*$-algebra generated by three unitaries $u,v,w$ satisfying
\[
u,v\in \{ w\}' ~~ \text{and}~~ uv=wvu.
\]
It is known \cite{PR} that $\mathcal{A}$ has a central-valued trace $\tau :\mathcal{A}\to C^*(w)$ defined by
\[
\tau(w^ku^lv^m):=\begin{cases}
w^k & \text{if} ~~l=m=0\\
0 & \text{otherwise},
\end{cases}
\]
for $k,l,m\in \Z$ where $C^*(w)$ is the center of $\mathcal{A}$.

With this motivational background, we finally recall the notion of generalized Heisenberg group (see \cite{Tron} for more details). For each $n\geq 2$, the \textit{generalized Heisenberg group} $G(n)$ is the group generated by unitaries $\{U_i:1\leq i\leq n\}$ and $\{U_{jk}: 1\leq j< k\leq n\}$ satisfying the relations
\begin{equation}\label{equation: generalized Heisenberg group $C^*$-algebra}
U_i U_{jk} = U_{jk} U_i \text{ and } U_j U_k = U_{jk} U_k U_j \;\;\mbox{for all }1\leq i\leq n, \mbox{ and } 1\leq j<k\leq n.
	\end{equation}
The group $C^*$-algebra $C^*(G(n))$ associated with $G(n)$ is the universal $C^*$-algebra generated by unitaries $\{U_i:1\leq i\leq n\} \cup \{U_{jk}: 1\leq j< k\leq n\}$ satisfying \eqref{equation: generalized Heisenberg group $C^*$-algebra}.
	
Recall that a \emph{representation} of a $C^*$-algebra $\mathcal{A}$ is a pair $(\pi ,\clh)$, where $\clh$ is a Hilbert space and $\pi :\mathcal{A} \to \clb(\clh)$ is a $*$-homomorphism. If $\mathcal{A}$ is unital, then $\pi$ is assumed to be unital. Any pair $(\clu,\clh)$, where $\clu = \{U_i, U_{jk} : 1 \leq i, j, k \leq n, \mbox{ and } j\neq k\} \subseteq \clb(\clh)$ is a collection of unitaries with $U_{kj}= U_{jk}^*$ for all $1\leq j<k\leq n$ satisfying \eqref{equation: generalized Heisenberg group $C^*$-algebra} is called a \emph{representation} of $C^*(G(n))$. Two representations $(\clu, \mathcal{H})$ and $(\clw, \mathcal{K})$ are said to be \textit{unitarily equivalent} if there is a unitary $\eta :\mathcal{H} \to \mathcal{K}$ such that $\eta U_i = W_i \eta$ for all $1\leq i\leq n$. In this case, by Lemma \ref{lemma: pi V  tilde V}, it also follows that $\eta U_{ij} = W_{ij} \eta$ for all $i \neq j$.
	
Let $(\clu,\clh)$ be a representation of $C^*(G(n))$, and let
\[
\clw = \{W_i, W_{jk}: 1\leq i,j,k\leq n \mbox{ and } j< k \} \subseteq \clb(\clk),
\]
be a generating set of $C^*(G(n))$, where $W_{kj}= W_{jk}^*$ for all $1\leq j<k\leq n$. Then from the universal property of $C^*(G(n))$, it follows that there exists a unique representation $\pi :C^*(G(n)) \to \clb(\clk)$ such that $\pi(U_i)= W_i$ and $\pi(U_{jk}) = W_{jk}$ for all $ i,j,k$, and $ j\neq k$.

Now we return to $\clu_n$-twisted isometries. Let $U=(U_1,\ldots ,U_n)$ be a $\clu _n$-twisted isometry with respect to $\{U_{jk}\}_{j< k} \subseteq \clb(\clh)$, and let $A\subseteq I_n$. Set $\tilde{U_i}:=U_i|_{\clh_A}, \tilde{U}_{jk}:=U_{jk}|_{\clh_A}$ for all $i \in A^c$ and $j \neq k$. Then $\tilde{U_i}$ and $\tilde{U}_{jk}$ are unitaries satisfying
\begin{equation}\label{equation: T_A}
\tilde{U}_{kj}=\tilde{U}_{jk}^*, ~~
\tilde{U_i}\tilde{U}_{jk}=\tilde{U}_{jk}\tilde{U_i}, ~~\tilde{U}_{i_1}\tilde{U}_{i_2}= \tilde{U}_{i_1i_2}\tilde{U}_{i_2}\tilde{U}_{i_1},\mbox{ and } \tilde{U}_{jk}\tilde{U}_{lm}=\tilde{U}_{lm}\tilde{U}_{jk} ~~
\end{equation}
for all $i,{i_1},{i_2}\in A^c$, $1\leq j\neq k\leq n$, and $1\leq l\neq m\leq n$. We denote by $\T_A$ the universal $C^*$-algebra generated by $\{\tilde{U_i},\tilde{U}_{jk}: i\in A^c, j\neq k\}$ satisfying \eqref{equation: T_A} and call it the \textit{twisted noncommutative torus with respect to $A$} (or simply \textit{twisted noncommutative torus} if $A$ is clear from the context). Let $A \subseteq I_n$ with $|A|=m$ and let $(\mathcal{V}, \clw)$ be a representation of $\T_A$, where $\mathcal{V} = \{V_i, V_{jk}: i\in A^c, 1\leq j\neq k \leq n\} \subseteq \clb(\clw)$. Then there exists a $\clu_n$-twisted isometry $M_A$ such that
\begin{equation}\label{eqn: wd_V(B) =S}
wd_{M_A}(B) =
\begin{cases}
\{I_{\clw}, V_i: i \in A^c\} & \mbox{if } B = A
\\
\{0\} & \mbox{if } B \neq A.
\end{cases}
\end{equation}
Indeed, following the construction and properties of the model operators in Sections \ref{sec: example} and \ref{sec: model and wand data}, we set $U_{ij} = I_{H^2(\D^m)}\otimes V_{ij}$ for all $i \neq j$. Clearly, $U_{ij}^*=U_{ji}$ for all $i \neq j$. Then $\{U_{ij}\}_{i \neq j} \subseteq \clb(H^2_{\clw}(\D^m))$, and $M_A := (M_{A,1},\ldots, M_{A, n})$ on $H^2_{\clw}(\D^m)$ is a $\clu_n$-twisted isometry with respect to $\{U_{ij}\}_{i<j}$, where $M_{A, i}$'s are defined as in \eqref{eqn: tilde V_t}. Moreover, by Proposition \ref{prop: model of V on H_A} we obtain the desired equality \eqref{eqn: wd_V(B) =S}.
	
Also note that two representations of $\mathbb{T}_A$ are unitarily equivalent if and only if the corresponding $A$-wandering data are twisted unitarily equivalent. This leads to the following generalization of \cite[Theorem 5.3]{JP}:

\begin{thm}\label{thm: classification}
The unitary equivalence classes of $\clu_n$-twisted isometries are in bijection with enumerations of $2^n$ unitary equivalence classes of unital representations of twisted noncommutative tori $\mathbb{T}_A$, $A\subseteq I_n$.
\end{thm}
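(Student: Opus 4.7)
The plan is to make the bijection explicit via the wandering-data machinery already developed. The forward map sends a $\clu_n$-twisted isometry $V$ to the $2^n$-tuple of its $A$-wandering data, packaged as unital representations of $\mathbb{T}_A$; the inverse sends a $2^n$-tuple of representations to the direct sum of the corresponding model operators on vector-valued Hardy spaces, as built in the paragraph preceding the theorem. The key tool for injectivity will be Theorem \ref{thm: unitary equivalent}, which identifies the $A$-wandering data (jointly with the twisting unitaries) as a complete unitary invariant; surjectivity will be immediate from the model construction and \eqref{eqn: wd_V(B) =S}.

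For the forward direction, let $V=(V_1,\ldots,V_n)$ be a $\clu_n$-twisted isometry on $\clh$ with twisting unitaries $\{U_{jk}\}_{j<k}$ and fix $A \subseteq I_n$. By Lemma \ref{lemma: D_A reduces V_j}, the subspace $\cld_A(V)$ reduces each $V_j$ with $j \in A^c$ and each $U_{jk}$, so the collection $\mathcal{V}_A := \{V_j|_{\cld_A(V)}: j \in A^c\} \cup \{U_{jk}|_{\cld_A(V)}: j \neq k\}$ consists of unitaries on $\cld_A(V)$. To verify that $(\mathcal{V}_A,\cld_A(V))$ is a unital representation of $\mathbb{T}_A$, one checks the relations \eqref{equation: T_A}: commutation of the $V_i|_{\cld_A(V)}$'s with the $U_{jk}|_{\cld_A(V)}$'s and of the $U_{jk}|_{\cld_A(V)}$'s among themselves is inherited from $V$ (using Remark \ref{remark: Uij = V*V*VV}), while the twisted Weyl relation $V_{i_1} V_{i_2} = U_{i_1 i_2} V_{i_2} V_{i_1}$ on $\cld_A(V)$ is Lemma \ref{lemma: commute}. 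That the assignment descends to unitary equivalence classes is essentially the content of the implication $(1)\Rightarrow(2)$ in Theorem \ref{thm: unitary equivalent}: a unitary intertwining $V$ and $\tv$ restricts to a unitary between $\cld_A(V)$ and $\cld_A(\tv)$ which intertwines all the restricted $V_j$'s and $U_{jk}$'s, yielding a unitary equivalence of the associated representations.

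For the inverse, given a $2^n$-tuple $\{[(\mathcal{V}_A, \clw_A)]\}_{A \subseteq I_n}$ of unitary equivalence classes of unital representations of $\mathbb{T}_A$, pick representatives and form $M_A$ on $H^2_{\clw_A}(\D^{|A|})$ as in \eqref{eqn: tilde V_t} (with $M_\emptyset$ equal to the unitary tuple $\mathcal{V}_\emptyset$ itself). Set $M := \oplus_{A \subseteq I_n} M_A$. The discussion preceding the theorem together with \eqref{eqn: wd_V(B) =S} shows that $M$ is a $\clu_n$-twisted isometry whose orthogonal decomposition into shift/unitary blocks is precisely this direct sum, and whose $A$-wandering data recovers $\mathcal{V}_A$ under the canonical identification of $\cld_A(M)$ with $\clw_A$. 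Independence of the choice of representatives is immediate: a unitary $\tau: \clw_A \to \clw_A'$ intertwining the two representations induces $I_{H^2(\D^{|A|})} \otimes \tau$, which intertwines the corresponding models. Composing the two maps in either order and invoking Theorem \ref{thm: model} together with Theorem \ref{thm: unitary equivalent} yields the identity on unitary equivalence classes, completing the bijection. The main obstacle I anticipate is the careful bookkeeping of the extreme cases $A=\emptyset$ and $A=I_n$, where the representation data reduce to either a single unitary $n$-tuple satisfying the twisted relations or to just the twisting unitaries $\{\tilde U_{jk}\}$; but in both cases the relations \eqref{equation: T_A} degenerate to precisely the data determining $V|_{\clh_\emptyset}$ and $\{U_{jk}|_{\clw_{I_n}}\}$ respectively, so no additional verification is needed.
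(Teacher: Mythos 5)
Your proof is correct and follows the same route as the paper: the forward map sends $V$ to the family of restricted representations $\{(\pi_V(A),\cld_A(V))\}_{A\subseteq I_n}$, well-definedness and injectivity are obtained from Theorem \ref{thm: unitary equivalent}, and surjectivity is established by forming the direct sum $M=\oplus_{A\subseteq I_n}M_A$ of model operators built from the given representations via \eqref{eqn: tilde V_t} and \eqref{eqn: wd_V(B) =S}. The paper's proof differs only in that it carries out explicitly the verification that $\cld_A(M)\cong\clw_A$ and $\pi_M(A)\cong\clr_{\T_A}$ by summing over all blocks $B\subseteq I_n$, a computation your invocation of Theorem \ref{thm: model} and Theorem \ref{thm: unitary equivalent} compresses into one line.
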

\begin{proof}
Suppose $V:=(V_1,\dots ,V_n)$ is a $\clu_n$-twisted isometry on a Hilbert space $\mathcal{H}$. Then for each $A\subseteq I_n$, the pair $(\pi_V(A), \cld_A)$ is a representation of $\T_{A}$, where
\[
\pi_V(A) := \{V_i|_{\cld_A}, U_{jk}|_{\cld_A}: i \in A^c, j \neq k\}.
\]
Well-definedness and injectivity of the correspondence
\[
V \leftrightarrow \{(\pi_V(A), \cld_A): A \subseteq I_n\},
\]
follow from Theorem \ref{thm: unitary equivalent}. Now we check the surjectivity of this correspondence. Consider $\{(\clr_{\T_A}, \clw_A)\}_{A \subseteq I_n} $, where $(\clr_{\T_A}, \clw_A)$ is a representation of $\T_A$ and
\[
\clr_{\T_A}=\{V_{A,i}, V_{A,jk}: i \in A^c, ~j \neq k \} \subseteq \clb(\clw_A),
\]
for all $A \subseteq I_n$. Our aim is to construct a $\clu_n$-twisted isometry $M=(M_1,\ldots,M_n)$ on some Hilbert space $\clh$ such that $\pi_M(A)=\clr_{\T_A}$ for all $A \subseteq I_n$. Indeed, following the construction preceding the statement of this theorem, for each $A \subseteq I_n$, there exists a $\clu_n$-twisted isometry $M_A = (M_{A,1},\ldots, M_{A, n})$ with respect to $\{U_{A, ij}\}_{i < j}$ on $H^2_{\clw_A}(\D^{|A|})$ such that
\[
\cld_A(M_B)=\begin{cases}
\clw_A & \mbox{if } B = A
\\
\{0\} & \mbox{if } B \neq A,
\end{cases}
\]
and
\begin{equation}\label{eqn: pi_{M_B}(A)}
\pi_{M_B}(A) =
\begin{cases}
\clr_{\T_A} & \mbox{if } B = A
\\
\{0\} & \mbox{if } B \neq A.
\end{cases}
\end{equation}
Define $\clh = \oplus_{A \subseteq I_n} H^2_{\clw_A}(\D^{|A|})$ and
\[
M=\bigoplus\limits_{A \subseteq I_n}M_A=(\bigoplus\limits_{A \subseteq I_n}M_{A,1},\ldots, \bigoplus\limits_{A \subseteq I_n}M_{A,n}).
\]
Clearly, $M$ is a $\clu_n$-twisted isometry with respect to $\{U_{ij}\}_{i < j}:=\{\oplus_{A \subseteq I_n}U_{A,ij}\}_{i < j}$. It remains to check that $\pi_M(A)=\clr_{\T_A}$ for all $A$.
		
\noindent Fix $A \subseteq I_n$, and suppose $A =\{p_1,\ldots,p_m\} \subseteq I_n$ and $A^c=\{q_1,\ldots, q_{n-m}\}$. As
\[
\cld_A(M)=\bigcap\limits_{k_1,\ldots,k_{n-m}\in \Z_+}M_{q_1}^{k_1}\cdots M_{q_{n-m}}^{k_{n-m}}(\bigcap_{i=1}^{m}\ker M_{p_i}^*)=\bigoplus\limits_{B \subseteq I_n}\cld_A(M_B) \cong \clw_A,
\]
so (\ref{eqn: pi_{M_B}(A)}) gives us
\[
\pi_M(A)=\{M_i|_{\cld_A(M)}, U_{jk}|_{\cld_A(M)}: i \in A^c, j \neq k\}=\bigoplus\limits_{B \subseteq I_n}\pi_{M_B}(A) \cong \pi_{M_A}(A)=\clr_{\T_A},
\]
which completes the proof.
\end{proof}

Before proceeding we need to clarify the issue of reducing subspaces of model operators. First, given an $m$-tuple $X = (X_1, \ldots, X_m)$ on a Hilbert space $\clh$, we define the \textit{defect operator} $\mathbb{S}_m^{-1}(X, X^*)$ by
\[
\mathbb{S}_m^{-1}(X, X^*) = 1+\sum_{1 \leq i_1 < \ldots <i_t \leq m} (-1)^{t} X_{i_1} \cdots X_{i_t} X_{i_t}^* \cdots X_{i_1}^*.
\]
It should be noted that the above (well known) notion is inspired by the so-called hereditary functional calculus corresponding to the polynomial
\[
\mathbb{S}_m^{-1}(z,w) = 1+\sum_{1 \leq i_1 < \ldots <i_t \leq m} (-1)^{t} z_{i_1} \cdots z_{i_t} \bar{w}_{i_t} \cdots \bar{w}_{i_1},
\]
where $\mathbb{S}_m(z, w) = \prod_{i=1}^m (1 - z_i \bar{w}_i)^{-1}$, $z, w \in \D^m$, is the Szeg\"{o} kernel of the polydisc $\D^m$. In fact, if we consider $M_z :=(M_{z_1}, \ldots, M_{z_m})$ on $H^2_{\cle}(\D^m)$ for some Hilbert space $\cle$, then an easy computation (for instance, action of $\mathbb{S}_m^{-1}(M_z, M_z^*)$ on monomials) reveals that $\mathbb{S}_m^{-1}(M_z, M_z^*) = P_{\mathbb{C}} \otimes I_{\cle}$, where $P_{\mathbb{C}}$ denote the orthogonal projection of $H^2(\D^m)$ onto the space of all constant functions. Now, let $(V_1, \ldots, V_n)$ be a $\clu_n$-twisted isometry, and let $A = \{p_1, \ldots, p_m\} \subseteq I_n$. Consider the model operator $M_A = (M_{A,1},\ldots, M_{A,n})$ on $H^2_{\cld_A}(\D^{|A|})$ (see Proposition \ref{prop: model of V on H_A}). By Lemma \ref{lemma:key index}, we have
\[
M_{A,p_i} M_{z_j} = M_{z_j} M_{A,p_i} \qquad (p_i < j).
\]
Let us denote $M_{A,z} = (M_{A,p_1},\ldots, M_{A,p_m})$ for simplicity. For each $p_i \in A$, Lemma \ref{lemma:key index} again implies that $M_{A,p_i} M_{A,p_i}^* = M_{z_i} M_{z_i}^*$. Then the preceding equality yields
\[
\mathbb{S}_m^{-1}(M_{A,z}, M_{A,z}^*) = \mathbb{S}_m^{-1}(M_z, M_z^*) = P_{\mathbb{C}} \otimes I_{\cld_A}.
\]
Now assume that $\cls \subseteq  H^2_{\cld_A}(\D^m)$ is a closed subspace, and suppose that $\cls$ reduces $M_A$. In particular, $\cls$ reduces $M_{A,z}$, and hence by the previous identity it follows that $f(0) = (P_{\mathbb{C}} \otimes I_{\cld_A})f \in \cls$ for all $f \in \cls$. Therefore, $\cls = H^2_{\cld}(\D^m)$, where $\cld = \overline{\mbox{span}} \{f(0): f \in \cls\}$ is a closed subspace of $\cld_A$. Finally, by the representation of $M_{A,q_j}$ in \eqref{eqn: tilde V_t}, we have that $\cld$ reduces $V_{q_j}|_{\cld_A}$ and $U_{st}|_{\cld_A}$ for all $q_j \in A^c$ and $1\leq s \neq t \leq n$ respectively. We summarize this (along with the trivial converse) as follows:
	
\begin{prop}\label{prop: reducing subspace}
Let $(V_1, \ldots, V_n)$ be a $\clu_n$-twisted isometry, and let $M_A$ be the model operator corresponding to $A \subseteq I_n$. Suppose $\cls \subseteq H^2_{\cld_A}(\D^{|A|})$ is a closed subspace. Then $\cls$ reduces $M_A$ if and only if there exists a closed subspace $\cld \subseteq \cld_A$ such that $\cld$ reduces $V_j|_{\cld_A}$ and $U_{st}|_{\cld_A}$ for all $j \in A^c$ and $s \neq t$, and $\cls = H^2_{\cld}(\D^m)$.
\end{prop}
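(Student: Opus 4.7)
Most of the forward direction has already been laid out in the discussion preceding the statement, so my plan is to organize that computation into a clean two-way argument and address the one remaining point, namely the compatibility with $V_j|_{\cld_A}$ and $U_{st}|_{\cld_A}$.

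For the nontrivial direction ($\Rightarrow$), suppose $\cls$ reduces $M_A=(M_{A,1},\ldots,M_{A,n})$. Since $\cls$ in particular reduces the sub-tuple $M_{A,z}=(M_{A,p_1},\ldots,M_{A,p_m})$, it is invariant under every polynomial in $M_{A,z}$ and $M_{A,z}^*$; applying this to the defect polynomial $\mathbb{S}_m^{-1}$ and using the identity $\mathbb{S}_m^{-1}(M_{A,z},M_{A,z}^*)=P_{\mathbb{C}}\otimes I_{\cld_A}$ established just before the statement, we obtain that the ``evaluation at $0$'' map $f\mapsto f(0)$ sends $\cls$ into itself (viewing $\cld_A$ as constant functions in $H^2_{\cld_A}(\D^m)$). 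Define
\[
\cld=\overline{\mathrm{span}}\{f(0):f\in\cls\}\subseteq\cld_A.
\]
The plan is then to show $\cls=H^2_\cld(\D^m)$. The inclusion $H^2_\cld(\D^m)\subseteq\cls$ follows because $\cld\subseteq\cls$ and $\cls$ is $M_{A,z}$-invariant, so every monomial $z^k\eta$ with $\eta\in\cld$ lies in $\cls$, and these span $H^2_\cld(\D^m)$. For the reverse inclusion, given $f\in\cls$, one expands $f=\sum_{k\in\Z_+^m}z^k\eta_k$ and recovers each Taylor coefficient $\eta_k$ as the image of $f$ under an appropriate polynomial in $M_{A,z}^*$ followed by the defect projection $P_{\mathbb{C}}\otimes I_{\cld_A}$; since $\cls$ is $M_{A,z}^*$-invariant and stable under this projection, each $\eta_k\in\cld$, whence $f\in H^2_\cld(\D^m)$.

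Once $\cls=H^2_\cld(\D^m)$ is in hand, the reducing property of $\cld$ inside $\cld_A$ with respect to $V_j|_{\cld_A}$ ($j\in A^c$) and $U_{st}|_{\cld_A}$ ($s\neq t$) falls out of the explicit formulas \eqref{eqn: tilde V_t}. Indeed, for $\eta\in\cld$ the element $M_{A,q_j}\eta\in\cls$ equals $V_{q_j}|_{\cld_A}\eta$ (constants go to constants via that formula, because $D_l[U]$ acts trivially on constants), so $V_{q_j}|_{\cld_A}\cld\subseteq\cld$; applying the same observation to $M_{A,q_j}^*$ and to the tuple $\{U_{st}\}$ (which, as elements of $\{\pi_A U_{ij}\pi_A^*\}$, act on $H^2_{\cld_A}(\D^m)$ as $I_{H^2(\D^m)}\otimes U_{st}|_{\cld_A}$) yields the remaining invariance statements.

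The converse direction ($\Leftarrow$) is a direct verification using \eqref{eqn: tilde V_t}: if $\cld$ is as prescribed, then one checks summand-by-summand that each $M_{A,t}$ and $M_{A,t}^*$ preserves $H^2_\cld(\D^m)$, the key point being that the diagonal operators $D_l[U_{ij}]$ act fibrewise on $\cld_A$ (and hence preserve the subspace $\cld$ by the hypothesized $U_{ij}|_{\cld_A}$-invariance), while the multiplication operators $M_{z_i}$ obviously preserve $H^2_\cld(\D^m)$. I expect the main (mild) obstacle to be the Taylor-coefficient argument giving $\cls\subseteq H^2_\cld(\D^m)$; everything else is bookkeeping on top of the defect-operator identity and Lemma \ref{lemma:key index}.
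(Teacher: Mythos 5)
Your overall strategy — use the defect-operator identity to show that evaluation at $0$ maps $\cls$ into itself, set $\cld=\overline{\mathrm{span}}\{f(0):f\in\cls\}$, prove $\cls=H^2_{\cld}(\D^m)$, and then read off the invariance of $\cld$ from \eqref{eqn: tilde V_t} — is exactly the paper's. However, there is a genuine gap in the middle step. You argue that ``$\cls$ is $M_{A,z}$-invariant, so every monomial $z^k\eta$ with $\eta\in\cld$ lies in $\cls$.'' This does not follow: $M_{A,z}=(M_{A,p_1},\ldots,M_{A,p_m})$ is the \emph{twisted} tuple, and applying it to a constant vector $\eta$ produces $z^k(W_k\eta)$ for a unitary monomial $W_k$ built from the $U_{p_ip_j}|_{\cld_A}$'s (for example, $M_{A,p_2}^{k_2}M_{A,p_1}^{k_1}\eta = z_1^{k_1}z_2^{k_2}\,U_{p_2p_1}^{k_1k_2}\eta$), not $z^k\eta$. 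The same defect infects your Taylor-coefficient argument: applying a polynomial in $M_{A,z}^*$ and then $P_{\mathbb{C}}\otimes I_{\cld_A}$ to $f=\sum_k z^k\eta_k$ recovers $W_k^{-1}\eta_k$, not $\eta_k$. And you cannot absorb the $W_k$'s at that point, because you only establish $U_{st}|_{\cld_A}$-invariance of $\cld$ \emph{after} already using $\cls=H^2_\cld(\D^m)$ — which is circular.

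The repair is to extract the $U$-invariance \emph{first}, directly from $\cls$ reducing the full tuple $M_A$. By \eqref{eqn: U_{ij} = V_i^* V_j^* V_i V_j} applied to $M_A$ (equivalently, computing with \eqref{eqn: Pi V_A}), one has $M_{A,i}^*M_{A,j}^*M_{A,i}M_{A,j}=\pi_A U_{ij}\pi_A^* = I_{H^2(\D^m)}\otimes U_{ij}|_{\cld_A}$ for all $i\neq j$; hence $\cls$ reduces $I_{H^2(\D^m)}\otimes U_{ij}|_{\cld_A}$. Since $I\otimes U_{ij}|_{\cld_A}$ commutes with $P_{\mathbb{C}}\otimes I_{\cld_A}$, the subspace $\cld=(P_{\mathbb{C}}\otimes I_{\cld_A})\cls$ reduces $U_{ij}|_{\cld_A}$ for all $i\neq j$. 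With this in hand both inclusions go through: for $\eta\in\cld$ the vector $W_k^{-1}\eta$ lies in $\cld$, so $z^k\eta = M_{A,z}^k(W_k^{-1}\eta)\in\cls$; and for $f\in\cls$ the recovered coefficient $W_k^{-1}\eta_k$ lies in $\cld$, whence $\eta_k\in\cld$. The remainder of your argument (invariance of $\cld$ under $V_{q_j}|_{\cld_A}$ via constants, and the converse direction) is fine as written.
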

	
Given a $\clu_n$-twisted isometry $V = (V_1, \ldots, V_n)$, we denote by $C^*(V)$, the $C^*$-algebra generated by $\{V_i\}_{i=1}^{n}$. Evidently, $C^*(V)$ is unital. A subspace $\cld \subseteq \clh$ is said to be \emph{invariant} under $C^*(V)$ if $T \cld \subseteq \cld$ for all $T \in C^*(V)$. It is easy to check that $\cld$ is invariant under $C^*(V)$ if and only if $\cld$ reduces $T$ for all $T \in C^*(V)$ or, equivalently, $\cld$ reduces $V_i$ for all $i \in I_n$. We refer the reader to \eqref{eqn: Pi V_A} and Proposition \ref{prop: model of V on H_A} to recall the definitions of the canonical unitary $\pi_A$ and the model operator tuple $M_A$, respectively. Also recall the definition of $\pi_V(A)$ from the proof of Theorem \ref{thm: classification}. The next theorem is analogous to \cite[ Theorem 5.4]{JP}.
	
\begin{thm}\label{thm: trivial invariant C*}
Let $V = (V_1,\ldots,V_n)$ be a $\clu_n$-twisted isometry on $\clh$ with $\clu_n=\{U_{jk}\}_{j \neq k}$. The following are equivalent.
\begin{enumerate}
\item  Only trivial subspaces of $\clh$ are closed and invariant under $C^*(V)$.
			
\item There exists $A\subseteq I_n$ such that $V \cong M_A$ and $\cld_A(M_A)$ has only trivial subspaces that are invariant under $C^*(\pi_{M_A}(A))$.
\end{enumerate}
\end{thm}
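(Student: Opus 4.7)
The plan is to reduce both directions to the content of Proposition~\ref{prop: reducing subspace}, via a preliminary observation: a closed subspace $\cls \subseteq \clh$ is invariant under $C^*(V)$ if and only if it reduces every $V_i$ (since $V_i^* \in C^*(V)$), which is exactly the reducibility of $V$. So throughout the proof we may translate freely between ``invariant under $C^*(V)$'' and ``reduces $V$''.

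For $(1) \Rightarrow (2)$, invoke the orthogonal decomposition $\clh = \bigoplus_{A \subseteq I_n} \clh_A$ of Theorem~\ref{thm: main decomposition}. Each summand $\clh_A$ reduces $V$, hence is invariant under $C^*(V)$. Assumption $(1)$ forces exactly one summand to equal $\clh$ and all others to be $\{0\}$; call the surviving index $A$. Theorem~\ref{thm: model} (or directly Proposition~\ref{prop: model of V on H_A}) then yields $V \cong M_A$ on $H^2_{\cld_A}(\D^{|A|})$, where $\cld_A = \cld_A(V)$. To check that $\cld_A(M_A) = \cld_A$ has only trivial subspaces invariant under $C^*(\pi_{M_A}(A))$, suppose $\cld \subseteq \cld_A$ is closed and reduces every $V_j|_{\cld_A}$, $j \in A^c$, and every $U_{st}|_{\cld_A}$, $s \neq t$; equivalently, $\cld$ is invariant under $C^*(\pi_{M_A}(A))$. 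By Proposition~\ref{prop: reducing subspace}, $H^2_{\cld}(\D^{|A|})$ is a closed reducing subspace for $M_A$, hence (under the unitary equivalence $V \cong M_A$) a closed reducing subspace of $\clh$ for $V$. Then $(1)$ forces $H^2_{\cld}(\D^{|A|}) \in \{\{0\}, H^2_{\cld_A}(\D^{|A|})\}$, i.e., $\cld \in \{\{0\}, \cld_A\}$.

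For $(2) \Rightarrow (1)$, assume $V \cong M_A$ for some $A \subseteq I_n$, and that $\cld_A$ has only trivial subspaces invariant under $C^*(\pi_{M_A}(A))$. Let $\cls \subseteq H^2_{\cld_A}(\D^{|A|})$ be a closed subspace invariant under $C^*(M_A)$; equivalently, $\cls$ reduces $M_A$. Proposition~\ref{prop: reducing subspace} supplies a closed subspace $\cld \subseteq \cld_A$, reducing $V_j|_{\cld_A}$ and $U_{st}|_{\cld_A}$ for all $j \in A^c$ and $s \neq t$, with $\cls = H^2_{\cld}(\D^{|A|})$. Thus $\cld$ is invariant under $C^*(\pi_{M_A}(A))$, so by hypothesis $\cld \in \{\{0\}, \cld_A\}$, giving $\cls \in \{\{0\}, H^2_{\cld_A}(\D^{|A|})\}$; transporting back via the unitary equivalence proves $(1)$.

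The only genuinely delicate point is the step using Proposition~\ref{prop: reducing subspace}: one must be sure that every reducing subspace of the model tuple $M_A$ is of the product form $H^2_{\cld}(\D^{|A|})$ for some $\cld \subseteq \cld_A$ reducing the relevant restrictions, and conversely. Since Proposition~\ref{prop: reducing subspace} is already established (via the defect-operator argument with $\mathbb{S}_m^{-1}$), the rest of the argument is essentially bookkeeping about the orthogonal decomposition and the identification $V \cong M_A$.
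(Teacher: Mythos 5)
Your proposal is correct and takes essentially the same route as the paper: reduce to a single summand $\clh_A$ of the orthogonal decomposition, identify $V|_{\clh_A}$ with the model tuple $M_A$, and then invoke Proposition~\ref{prop: reducing subspace} to translate reducing subspaces of $M_A$ into subspaces of $\cld_A$ invariant under $C^*(\pi_{M_A}(A))$. The paper's own proof is just a terser statement of the same argument; your version only makes explicit the two bookkeeping steps (the equivalence between ``invariant under $C^*(V)$'' and ``reduces each $V_i$'', and the observation that hypothesis $(1)$ together with the decomposition $\clh=\bigoplus_A\clh_A$ forces exactly one nonzero summand) which the paper leaves implicit.
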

\begin{proof}
$(1)\Rightarrow(2)$: Evidently, $\clh = \clh_A$ for some $A \subseteq I_n$, and hence $V \cong M_A$, where $M_A$ is a $\clu_n$-twisted isometry on $H^2_{\cld_A}(\D^{|A|})$. So the only trivial subspaces of $H^2_{\cld_A}(\D^{|A|})$ are closed and invariant under $C^*(M_A)$. The rest follows from Proposition \ref{prop: reducing subspace}.

\noindent Similarly, $(2)\Rightarrow(1)$ follows from Proposition \ref{prop: reducing subspace}.
\end{proof}

\begin{cor}
Let $(V_1,\ldots,V_n)$ be a $\clu_n$-twisted isometry and $A \subseteq I_n$ such that $V_i$ are shifts for $i \in A$ and are unitaries for $i \in A^c$ with
\[
\dim (\bigcap\limits_{i \in A} \ker V_i^*)=1,
\]
then $C^*(V_1,\ldots,V_n)$ is irreducible. In particular, if $(V_1,\ldots,V_n)$ are $\clu_n$-twisted shifts with $\dim(\bigcap\limits_{i \in I_n}\ker V_i^*)=1$, then $C^*(V_1,\ldots,V_n)$ is irreducible.
\end{cor}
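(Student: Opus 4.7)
The plan is to reduce everything to Theorem \ref{thm: trivial invariant C*}, which already packages the link between invariance under $C^*(V)$ and the analytic model. Specifically, I will show that the hypotheses force the situation in part $(2)$ of that theorem with a one-dimensional wandering subspace, and then invoke triviality of subspaces of a line.

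First I would observe that under the hypothesis, the tuple $(V_1,\ldots,V_n)$ already sits inside a single summand of the orthogonal decomposition given by Theorem \ref{thm: main decomposition}. Indeed, $\clh$ itself is a $V$-reducing closed subspace on which $V_i|_{\clh}$ is a shift for $i\in A$ and a unitary for $i\in A^c$, so by Corollary \ref{cor: unique}, $\clh = \clh_A$ and $\clh_B = \{0\}$ for every $B \neq A$. Then Proposition \ref{prop: model of V on H_A} yields $V \cong M_A$ on $H^2_{\cld_A}(\D^{|A|})$.

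Next I would compute $\cld_A(V)$ explicitly. By definition,
\[
\cld_A(V) = \bigcap_{l \in \Z_+^{n-|A|}} V_{I_n\setminus A}^{l}\, \clw_A,
\]
and by Lemma \ref{lemma: reducing}, $\clw_A$ reduces each $V_j$, $j\in A^c$. Since $V_j$ is a unitary on $\clh$ for every $j \in A^c$, its restriction $V_j|_{\clw_A}$ is a unitary on $\clw_A$, and hence $V_j\clw_A = \clw_A$ for each such $j$. Consequently
\[
\cld_A(V) = \clw_A = \bigcap_{i\in A}\ker V_i^*,
\]
so by hypothesis $\dim \cld_A(V) = 1$.

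Since every subspace of a one-dimensional Hilbert space is either $\{0\}$ or the whole space, the space $\cld_A(M_A) \cong \cld_A(V)$ trivially admits only the zero and the total subspace as invariant subspaces under $C^*(\pi_{M_A}(A))$. Therefore the equivalence in Theorem \ref{thm: trivial invariant C*} applies, and the only closed $C^*(V)$-invariant subspaces of $\clh$ are $\{0\}$ and $\clh$; that is, $C^*(V_1,\ldots,V_n)$ is irreducible. The ``In particular'' assertion follows at once by setting $A = I_n$, since then $A^c = \emptyset$ (so the unitarity requirement is vacuous) and the wandering condition $\dim(\bigcap_{i\in I_n}\ker V_i^*)=1$ matches the hypothesis. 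No step is really an obstacle here; the only point worth being careful about is the identification $\cld_A(V) = \clw_A$ under the unitarity assumption on $V_j$ for $j\in A^c$, which uses Lemma \ref{lemma: reducing} together with the fact that a unitary on a reducing subspace is surjective onto that subspace.
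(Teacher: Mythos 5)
Your proof is correct and takes essentially the route the authors intend: the corollary is stated in the paper without explicit proof as a direct consequence of Theorem \ref{thm: trivial invariant C*}, and you correctly fill in the details. In particular, using Proposition \ref{prop: unique} (or Corollary \ref{cor: unique}) to identify $\clh=\clh_A$, noting that unitarity of $V_j$ for $j\in A^c$ together with Lemma \ref{lemma: reducing} gives $\cld_A(V)=\clw_A$, and then applying the dimension-one hypothesis is exactly the argument the paper leaves implicit.
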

\begin{example}
The multiplication operators $(M_{z_1}, \ldots, M_{z_n})$ by the co-ordinate functions on the Hardy space $H^2(\D^n)$ with $n \geq 2$ generate an irreducible $C^*$-algebra.
\end{example}

Recall again, given a \textit{representation} $(\mathcal{H},\pi)$ of a unital $C^*$-algebra $\mathcal{A}$, a closed subspace $\cld \subseteq \clh$ \textit{reduces} $\pi$ if $\cld$ reduces $\pi(a)$ for all $a \in A$. A representation $(\mathcal{H},\pi)$ is called \textit{irreducible} if trivial subspaces are the only reducing subspaces of $\pi$. Clearly, if $\{s_i:i\in I\}$ is a generating set of a $C^*$-algebra $\mathcal{A}$, then a closed subspace $\mathcal{D} \subseteq \clh$ reduces $\pi$ if and only if it reduces $\pi(s_i)$ for all $i\in I$. The following which is an analogous version of \cite[Corollary 5.5]{JP}, is now an immediate consequence of Theorems \ref{thm: classification} and \ref{thm: trivial invariant C*}.

\begin{cor}\label{cor: classification of noncommutative tori}
The unitary equivalence classes of the nonzero irreducible representations of the $C^*$-algebras generated by $\clu_n$-twisted isometries are parameterized by the unitary equivalence classes of the non-zero irreducible representations of $2^n$ noncommutative tori $\T_A$, with $A \subseteq I_n$.
\end{cor}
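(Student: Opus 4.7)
The plan is to reduce the classification problem to the two already–established pieces: Theorem \ref{thm: classification} (which parameterizes unitary equivalence classes of $\clu_n$-twisted isometries by $2^n$-tuples of unitary equivalence classes of representations of the twisted tori $\T_A$) and Theorem \ref{thm: trivial invariant C*} (which pins down when $C^*(V)$ acts irreducibly in terms of the single wandering datum $\pi_{M_A}(A)$). First I would unpack the meaning of a ``nonzero irreducible representation of the $C^*$-algebra generated by a $\clu_n$-twisted isometry'': it is a $\clu_n$-twisted isometry $V$ on some nonzero Hilbert space $\clh$ such that the only closed subspaces of $\clh$ invariant under $C^*(V)$ are $\{0\}$ and $\clh$ itself, i.e.\ reducing subspaces are trivial.

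Next I would apply Theorem \ref{thm: trivial invariant C*}: such a $V$ must be unitarily equivalent to some model operator $M_A$, with $A \subseteq I_n$ uniquely determined (since distinct $A$'s correspond to distinct combinations of shift/unitary entries and thus cannot be unitarily equivalent), and the wandering space $\cld_A(M_A)$ has no nontrivial subspace invariant under $C^*(\pi_{M_A}(A))$. Observing that $\pi_{M_A}(A) = \{V_i|_{\cld_A},\, U_{jk}|_{\cld_A}: i \in A^c,\, j \neq k\}$ is exactly the generating set of the representation of $\T_A$ assigned to $V$ in Theorem \ref{thm: classification}, the condition ``$\cld_A(M_A)$ has only trivial $C^*(\pi_{M_A}(A))$-invariant subspaces'' is precisely irreducibility of that representation of $\T_A$. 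Thus each nonzero irreducible representation of $C^*(V)$ yields a unique $A \subseteq I_n$ and a nonzero irreducible representation of $\T_A$.

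For the reverse direction I would take a nonzero irreducible representation $(\clr_{\T_A},\clw_A)$ of some $\T_A$, form the associated model $\clu_n$-twisted isometry $M_A$ on $H^2_{\clw_A}(\D^{|A|})$ as in the proof of Theorem \ref{thm: classification}, and invoke Proposition \ref{prop: reducing subspace}: any closed reducing subspace of $M_A$ has the form $H^2_{\cld}(\D^{|A|})$ for some subspace $\cld \subseteq \clw_A$ reducing all $V_j|_{\clw_A}$ and $U_{st}|_{\clw_A}$, i.e.\ invariant under $C^*(\clr_{\T_A})$. Irreducibility of the representation forces $\cld \in \{0,\clw_A\}$, hence $C^*(M_A)$ acts irreducibly on $H^2_{\clw_A}(\D^{|A|})$. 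Finally I would check compatibility with unitary equivalence: two nonzero irreducible representations of $C^*(V)$ and $C^*(\tilde V)$ are unitarily equivalent iff $V \cong \tilde V$, which by Theorem \ref{thm: unitary equivalent} is equivalent to twisted unitary equivalence of the corresponding single nonzero wandering datum, i.e.\ unitary equivalence of the corresponding representations of $\T_A$. Assembling these bijective correspondences across all $A \subseteq I_n$ yields the parameterization.

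The only mildly subtle step is the identification of $C^*(\pi_{M_A}(A))$-invariant subspaces of $\cld_A$ with $\T_A$-reducing subspaces of $\clw_A$, but this is immediate once one notes that $\{V_j|_{\cld_A}\}_{j\in A^c} \cup \{U_{st}|_{\cld_A}\}_{s\neq t}$ is a generating set for the image of the representation and that reducibility for a $C^*$-algebra is equivalent to reducibility for any generating set. Everything else is a clean concatenation of Theorems \ref{thm: classification}, \ref{thm: unitary equivalent}, \ref{thm: trivial invariant C*} and Proposition \ref{prop: reducing subspace}.
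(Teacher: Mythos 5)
Your proposal is correct and follows the same approach as the paper, which dispatches the corollary as an immediate consequence of Theorems \ref{thm: classification} and \ref{thm: trivial invariant C*}. You have simply spelled out the details of that concatenation, including the supporting roles of Theorem \ref{thm: unitary equivalent} and Proposition \ref{prop: reducing subspace} that the paper leaves implicit.
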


We finally remark that the examples in Section \ref{sec: example} are the basic building blocks of $\clu_n$-twisted isometries. The same construction can also be applied to produce more natural examples of tuples of operators (for instance, replace the unitary $U$ in $D[U]$ by some isometry $V$). The present findings also suggest the following natural question: Classify $C^*$-algebras generated by tuples of isometries $(V_1, \ldots, V_n)$ on $\clh$ that satisfies $V_i V_j = U_{ij} V_j V_i$, where $\{U_{ij}\}_{i\neq j} \subseteq \clb(\clh)$ are unitaries. We hope in the near future to be able to present results in some of these natural directions.

\smallskip

\noindent\textbf{Acknowledgement:}
We are grateful to the referee for a careful reading, helpful suggestions, and for pointing out recent developments and relevant connections. The first author thanks to NBHM postdoctoral fellowship (India) for financial support.
The second named author is supported in part by the Mathematical Research Impact Centric Support, MATRICS (MTR/2017/000522), and Core Research Grant (CRG/2019/000908), by SERB (DST), and NBHM (NBHM/R.P.64/2014), Government of India.

\end{document}